\DeclarePairedDelimiterX\setc[2]{\{}{\}}{\,#1 \;\delimsize\vert\; #2\,}
\newtheorem{theorem}{Theorem}[section]
\newtheorem{corollary}[theorem]{Corollary}
\newtheorem{proposition}[theorem]{Proposition}
\newtheorem{lemma}[theorem]{Lemma}
\theoremstyle{definition}
\newtheorem{remark}[theorem]{Remark}
\newtheorem{example}[theorem]{Example}
\newtheorem{definition}[theorem]{Definition}
\def\bigmid{\ \rule[-3.5mm]{0.1mm}{9mm}\ }
\newcommand{\CC}{{\mathbb C}}
\newcommand{\RR}{{\mathbb R}}
\newcommand{\ZZ}{{\mathbb Z}}
\newcommand{\NN}{{\mathbb N}}
\newcommand{\QQ}{{\mathbb Q}}
\newcommand{\ton}{{\otimes_{\Lambda_{\geq 0}}}}
\newcommand{\shn}{{SH^{S^1, -, >L}_M(K)}}
\newcommand{\lr}{{\,\,\longrightarrow\,\,}}
\pgfplotsset{compat=1.18}
\begin{document}

\title[$S^1$-equivariant relative symplectic cohomology]{$S^1$-equivariant relative symplectic cohomology and relative symplectic capacities}
\author[Jonghyeon Ahn]{Jonghyeon Ahn}
\newcommand{\Addresses}{{
\bigskip
\bigskip
\footnotesize
\textsc{Department of Mathematics,
University of Illinois Urbana-Champaign, Urbana, IL, 61801, USA.}\par\nopagebreak
\textit{E-mail address}: \texttt{ja34@illinois.edu}}}

\date{}
\begin{abstract}
    In this paper, we construct an $S^1$-equivariant version of the relative symplectic cohomology developed by Varolgunes. As an application, we construct a relative version of Gutt-Hutchings capacities and a relative version of symplectic (co)homology capacity. We will see that these relative symplectic capacities can detect the diplaceability and the heaviness of a compact subset of a symplectic manifold. We compare the first relative Gutt-Hutchings capacity and the relative symplectic (co)homology capacity and prove that they are equal to each other under a convexity assumption.

\end{abstract}

\maketitle

\tableofcontents

\section{Introduction}
\subsection{Motivation}
The symplectic (co)homology of a compact symplectic manifold with contact type boundary was introduced by Floer and Hofer in \cite{fh} and developed in papers with Cieliebak and Wysocki in \cite{cfh,cfhw, fhw}. A different version of symplectic (co)homology was later introduced by Viterbo in \cite{vi}. This work also contains an $S^1$-equivariant version of symplectic (co)homology $SH^{S^1}(W)$ which was later developed by Bourgeois and Oancea in \cite{bo09, bo13, bo} motivated, in part by its connections to contact homology. 
Recently, Varolgunes introduced a relative version of symplectic cohomology $SH_M(K)$ in \cite{v,vt} which is defined for any subset $K$ of a closed symplectic manifold $(M,\omega)$. Different relative versions of symplectic cohomology were also introduced by Groman in \cite{g} and Venkatesh in \cite{ven}. Varolgunes's relative symplectic cohomology has been applied and extended in several recent works such as \cite{dgpz}, \cite{msv}, \cite{s} and \cite{s24}. In this paper, we develop and apply an $S^1$-equivariant version of Varolgunes's relative symplectic cohomology.
\subsection{Results}
Let 
\begin{align*}
 \Lambda  = \left\{ \sum_{i=0}^\infty c_i T^{\lambda_i} \bigmid c_i \in \QQ, \lambda_i \in \RR\,\,\text{and}\,\, \lim_{i \to \infty} \lambda_i = \infty \right\}   
\end{align*}
 be the Novikov field. The Novikov ring $\Lambda_{\geq 0}$ is the subring of $\Lambda$ given by 
 \begin{align*}
   \Lambda_{\geq 0}  = \left\{ \sum_{i=0}^\infty c_i T^{\lambda_i} \in \Lambda \bigmid \lambda_i \geq 0 \right\}.  
 \end{align*}
Let $(M, \omega)$ be a closed symplectic manifold and $K \subset M$ be a compact domain\footnote{By domain, we mean a submanifold  with boundary of codimension zero.} with contact type boundary. Briefly speaking, $SH^{S^1}_{M}(K)$ is graded module over the Novikov ring $\Lambda_{\geq 0}$ and is defined by
\begin{align*}
    SH^{S^1}_{M}(K) = H \left( \widehat{\varinjlim_{H \in \mathcal{H}_K^{\text{Cont}}}} CF^{S^1}_w(H) \right)
\end{align*}
where $ CF^{S^1}_w(H) = \Lambda_{\geq 0}[u] \ton CF_w(H)$ is the weighted $S^1$-equivariant Floer complex of $H$ and $\mathcal{H}_K^{\text{Cont}}$ is the set of suitably chosen Hamiltonian functions. The $S^1$-equivariant relative symplectic cohomology of $K$ in $M$ with coefficient in $\Lambda$ is defined by
\begin{align*}
    SH^{S^1}_M(K ; \Lambda) = SH^{S^1}_M(K ) \ton \Lambda.
\end{align*}
Precise definition will be provied in \S2 and \S3.

Our first results relate $SH^{S^1}_M(K)$ to the $SH^{}_M(K)$. Arguing as in \cite{bo}, we get the following.
\begin{theorem}\label{similars1} 
Let $(M, \omega)$ be a closed symplectic manifold.  Let $K \subset M$ be compact domain with contact type boundary. 
\begin{enumerate}[label=(\alph*)]
 \item (Proposition \ref{gysin}) There exists a Gysin exact triangle
    \begin{center}
    \includegraphics[scale=1.21]{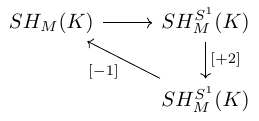}
    \end{center}
        \item (Corollary \ref{gysinaction}) If $\omega|_{\pi_2(M)} = 0$, then there exists a Gysin exact triangle
    \begin{center}
        \begin{tikzcd}
    SH^{>L}_M(K) \arrow{r} &SH^{S^1,>L}_M(K) \arrow{d}{[+2]} \\
    &SH^{S^1,>L}_M(K) \arrow{lu}{[-1]}    
\end{tikzcd}
    \end{center}
    for each $L \in \RR$. Here, $SH^{>L}_M(K)$ is the relative symplectic cohomology of $K$ in $M$ generated by Hamiltonian orbits of action greater than $L$ and $SH^{S^1, >L}_M(K)$ is similarly defined. \\

    \item (Proposition \ref{spectral}) There exists a spectral sequence $E_r^{p,q}(M, K)$ converging to $SH^{S^1}_M(K)$ such that its second page is given by
    \begin{align*}
        E_2^{p,q}(M,K) \cong H^p(BS^1 ; \Lambda_{\geq 0}) \otimes SH^q_M(K) 
    \end{align*}
    where $BS^1$ stands for the classifying space of $S^1$, that is, $BS^1 = \mathbb{CP}^{\infty}$.
\end{enumerate}
  Moreover, (a), (b) and (c) still hold if we replace the coefficient ring $\Lambda_{\geq0}$ by $\Lambda$.\\
\end{theorem}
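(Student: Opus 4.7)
My strategy is to mirror the Bourgeois--Oancea arguments \cite{bo} at the chain level for each admissible $H \in \mathcal{H}_K^{\text{Cont}}$, and then check that the construction is natural with respect to continuation maps, the filtered direct limit $\varinjlim_H$, and the Novikov completion $\widehat{\cdot}$. The hardest step will be controlling the interplay between $\widehat{\cdot}$ and the other operations, since completion is only left exact in general.

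\textbf{Parts (a) and (b).} For a fixed $H$, I start with the tautological short exact sequence of $\Lambda_{\geq 0}$-module chain complexes
\begin{equation*}
    0 \lr CF_w(H) \lr CF^{S^1}_w(H) \xrightarrow{\pi} CF^{S^1}_w(H) \lr 0,
\end{equation*}
where the first nontrivial map sends $x \mapsto 1 \otimes x$ (inclusion as the $u^0$-summand) and $\pi$ is division by $u$: $\sum_{k \geq 0} u^k \otimes x_k \mapsto \sum_{k \geq 1} u^{k-1} \otimes x_k$ (a map of total degree $-2$). A direct check shows that $\pi$ intertwines the equivariant differential $\partial^{S^1} = \sum_{k \geq 0} u^k \partial_k$, so this is a short exact sequence of chain complexes. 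By naturality in $H$ the sequence is preserved under continuation maps and $\varinjlim_H$, and---subject to the completion compatibility discussed below---under $\widehat{\cdot}$. Taking cohomology and folding the resulting long exact sequence into a triangle produces the Gysin triangle of part (a). For part (b), the hypothesis $\omega|_{\pi_2(M)} = 0$ ensures the symplectic action is well-defined on Hamiltonian $1$-orbits (independent of the choice of capping), so $CF^{>L}_w(H)$ and $CF^{S^1, >L}_w(H)$ are genuine subcomplexes; the short exact sequence above restricts to them and the same argument yields the action-filtered Gysin triangle.

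\textbf{Part (c).} For the spectral sequence, I equip $CF^{S^1}_w(H)$ with the natural $u$-adic filtration $F^p = u^p \Lambda_{\geq 0}[u] \ton CF_w(H)$. As in Bourgeois--Oancea, a direct computation of the associated graded---whose $d_0$ differential is the ordinary Floer differential $\partial_0$---shows that for fixed $H$ the $E_2$ page takes the form $H^p(BS^1;\Lambda_{\geq 0}) \otimes HF^q_w(H)$ (up to regrading conventions on the $u$-degree). Exactness of $\varinjlim_H$ and the completion compatibility described below then convert $HF^q_w(H)$ into $SH^q_M(K)$, yielding the claimed identification of $E_2^{p,q}(M,K)$.

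\textbf{Main obstacle and the $\Lambda$-coefficients.} Across all three parts, the delicate point is the interaction between $\widehat{\cdot}$ and the other operations. Filtered direct limits are exact and commute with cohomology, tensor products, and the associated-graded functor. Novikov completion is only left exact in general; the remedy is to verify the Mittag-Leffler condition on the inverse systems $\{C/T^\lambda C\}_\lambda$, which in our setting holds automatically because the Floer complexes are free $\Lambda_{\geq 0}$-modules with surjective transition maps $C/T^{\lambda_2}C \twoheadrightarrow C/T^{\lambda_1}C$ for $\lambda_2 > \lambda_1$. This lets me commute $\widehat{\cdot}$ past the short exact sequences of (a),(b) and the spectral sequence of (c). Finally, the statement with $\Lambda$-coefficients is automatic because $\Lambda$ is flat over $\Lambda_{\geq 0}$, so $- \ton \Lambda$ preserves all the relevant exact sequences and spectral sequence pages.
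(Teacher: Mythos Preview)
Your proposal is correct and takes essentially the same approach as the paper: the Gysin triangles come from the short exact sequence $0 \to CF_w(H) \to CF^{S^1}_w(H) \to CF^{S^1}_w(H)[+2] \to 0$, passed through $\varinjlim_H$ and then through completion via a flatness/Mittag--Leffler argument exactly as you outline, and the spectral sequence comes from the $u$-degree filtration. The only cosmetic difference is that the paper, working with the convention $d^{S^1}(u^k\otimes x)=\sum_{i=0}^k u^{k-i}\otimes\psi_i(x)$ in which the differential \emph{lowers} the $u$-power, uses the increasing filtration $F^p = \widehat{\varinjlim}\,(\Lambda_{\geq 0}[u]/\langle u^{p+1}\rangle)\otimes CF_w(H)$ rather than your decreasing $u$-adic one---your filtration would not be a subcomplex under the paper's convention, though it is under yours ($\partial^{S^1}=\sum u^k\partial_k$), so this is purely a matter of sign/convention and not of substance.
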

\begin{corollary}\label{bothdie}
    Let $(M, \omega)$ be a closed symplectic manifold and let $K \subset M$ be compact domain with contact type boundary. Then relative symplectic cohomology $SH_M(K) $ vanishes if and only if $S^1$-equivariant symplectic cohomology $SH^{S^1}_M(K)$ vanishes. Also, $SH_M(K ; \Lambda) = 0$ if and only if $SH^{S^1}_M(K ; \Lambda) = 0$.
\end{corollary}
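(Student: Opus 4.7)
The corollary should follow formally from parts (a) and (c) of Theorem \ref{similars1}, and I would argue the two implications separately.

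For the direction $SH^{S^1}_M(K) = 0 \Longrightarrow SH_M(K) = 0$, I would unfold the Gysin exact triangle of part (a) into its associated long exact sequence
\begin{align*}
\cdots \lr SH^{n}_M(K) \lr SH^{S^1,n}_M(K) \xrightarrow{\cdot u} SH^{S^1,n+2}_M(K) \lr SH^{n+1}_M(K) \lr \cdots
\end{align*}
Vanishing of the two middle terms sandwiches each $SH^{n}_M(K)$ between two zeros, so the implication is immediate.

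For the converse $SH_M(K) = 0 \Longrightarrow SH^{S^1}_M(K) = 0$, the long exact sequence alone only establishes that multiplication by $u$ is an isomorphism on $SH^{S^1}_M(K)$, which does not by itself force vanishing. Instead I would invoke the spectral sequence from part (c), whose second page reads
\begin{align*}
E_2^{p,q}(M,K) \cong H^p(BS^1;\Lambda_{\geq 0}) \otimes SH^q_M(K).
\end{align*}
If $SH_M(K) = 0$, then $E_2 = 0$, and consequently $E_r = 0$ for every $r \geq 2$; convergence to the abutment then yields $SH^{S^1}_M(K) = 0$. The second claim, over $\Lambda$, follows by rerunning both arguments with the $\Lambda$-coefficient versions of (a) and (c), which are provided by the ``moreover'' clause of Theorem \ref{similars1}.

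The only non-formal point on which I would spend genuine care is the \emph{strong} convergence of the spectral sequence in (c), since vanishing of $E_\infty$ does not automatically imply vanishing of the abutment without completeness of the filtration. Because $SH^{S^1}_M(K)$ is constructed as the cohomology of a completed colimit and the relevant filtration is by powers of $u$, I expect the filtration to be complete and Hausdorff, making strong convergence hold for the reasons that already enter the proof of Theorem \ref{similars1}(c). Beyond this, I foresee no real obstacle.
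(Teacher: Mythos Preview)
Your proposal is correct and follows essentially the same approach as the paper: the Gysin triangle of Theorem~\ref{similars1}(a) for one implication and the vanishing of the $E_2$-page of the spectral sequence in Theorem~\ref{similars1}(c) for the other, with the $\Lambda$-coefficient statement handled identically. Your added remark about needing strong convergence of the spectral sequence is a valid point of care that the paper's proof leaves implicit.
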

\begin{proof}
If $SH^{S^1}_M(K) = 0$, then the Gysin sequence given in (a) of Theorem \ref{similars1} implies that $SH_M(K) = 0$. Conversely, if $SH_M(K) = 0$, then the second page of the spectral sequence in (c) of Theorem \ref{similars1} vanishes and hence $SH^{S^1}_M(K) = 0$. The statement concerning coefficient in $\Lambda$ can be proved analogously. \\
\end{proof}
Next we consider the features of Varolgunes's relative symplectic cohomology, $SH_M(K)$, that are inherited by $SH^{S^1}_M(K)$. One of the remarkable properties of this relative symplectic cohomolgy is that it detects the displaceability of compact subsets. A subset $K$ of $M$ is said to be  \textbf{displaceable} if there exists a Hamiltonian diffeomorphism $\phi : M \to M$ such that $\phi(K) \cap K = \emptyset$.
\begin{theorem}[Varolgunes \cite{vt}]\label{dis}
    Let $(M, \omega)$ be a closed symplectic manifold and $K \subset M$ be a compact subset. If $K$ is displaceable, then $SH_M(K; \Lambda) = 0$.
\end{theorem}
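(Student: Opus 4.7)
The plan is to exploit displaceability to exhibit $SH_M(K)$ as a $\Lambda_{\geq 0}$-module on which a positive power of the Novikov parameter $T$ acts as zero; since $T$ is invertible in $\Lambda$, this will force $SH_M(K; \Lambda) = 0$.

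Choose a Hamiltonian $F : S^1 \times M \to \RR$ whose time-one map $\phi \eqdef \phi_F^1$ satisfies $\phi(K) \cap K = \emptyset$, and whose Hofer length is within $\epsilon$ of the displacement energy $e(K)$. For each acceleration Hamiltonian $H \in \mathcal{H}_K^{\mathrm{Cont}}$, I would construct a chain map $\Psi_H : CF_w(H) \to CF_w(H)$ enjoying two complementary properties:
\begin{enumerate}[label=(\roman*)]
    \item \textbf{Algebraic identification:} $\Psi_H$ is chain-homotopic to multiplication by $T^{e(K) - \epsilon}$, extracted from a standard continuation argument along a loop of Hamiltonians traversing the displacing isotopy and its reverse; the $T$-weight arises from the Hofer length of $F$ via the Novikov filtration by symplectic action.
    \item \textbf{Geometric vanishing:} $\Psi_H$ is itself nullhomotopic, because any Floer cylinder contributing to $\Psi_H$ would have to interpolate between an orbit of $H$ concentrated near $K$ and an orbit of the $\phi$-conjugated Hamiltonian concentrated near $\phi(K)$; since $\phi(K) \cap K = \emptyset$, the corresponding parametrized moduli space is empty for generic choice of data.
\end{enumerate}

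Combining (i) and (ii) shows that multiplication by $T^{e(K) - \epsilon}$ is nullhomotopic on $CF_w(H)$ for every sufficiently large $H$. Arranging this nullhomotopy compatibly across the acceleration data, it descends through the direct limit and completion defining $SH_M(K)$, so $T^{e(K) - \epsilon}$ acts as zero on $SH_M(K)$. Since $\epsilon > 0$ is arbitrary and $T^{e(K) - \epsilon}$ is a unit in $\Lambda$, tensoring with $\Lambda$ annihilates everything, giving $SH_M(K; \Lambda) = 0$.

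The main obstacle will be the a priori energy estimate underlying (i): one must bound the symplectic area of solutions to the parameterized Floer equation sharply enough to read off exactly the weight $T^{e(K) - \epsilon}$ from the action functional, which amounts to a careful accounting of the Hofer norm contribution of $F$ along the isotopy. A further subtlety is making (i) and (ii) hold \emph{uniformly} across the class $\mathcal{H}_K^{\mathrm{Cont}}$ so that the chain homotopy persists in the acceleration and completion operations; this relies on the specific orbit-localization properties of the admissible Hamiltonians near $K$ built into Varolgunes's construction, and is the reason the argument is formulated directly at the level of the relative complex rather than after passing to cohomology.
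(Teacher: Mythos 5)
First, note that the paper does not prove this statement at all: it is quoted from Varolgunes \cite{vt}, so the comparison here is with Varolgunes's argument rather than with anything in the text. Your top-level strategy is the right one and is indeed the standard one: show that some positive power of $T$ annihilates $SH_M(K)$ (equivalently, that the identity is nullhomotopic after multiplication by $T^{E}$ for some finite $E$ controlled by the displacement energy), and then conclude $SH_M(K;\Lambda)=SH_M(K)\otimes_{\Lambda_{\geq 0}}\Lambda=0$ because $T$ is invertible in $\Lambda$.

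However, there is a genuine gap in how you justify the two key properties of $\Psi_H$. The ``geometric vanishing'' in (ii) is not correct as stated: the disjointness $\phi(K)\cap K=\emptyset$ does \emph{not} make the relevant parametrized moduli spaces empty, since Floer cylinders are free to travel through all of $M$; disjointness of the loci where the asymptotic orbits sit gives no constraint on connecting trajectories. The actual mechanism in Varolgunes's proof is orbit-theoretic, not trajectory-theoretic: after composing an acceleration Hamiltonian $H_n$ (small on $K$) with the displacing Hamiltonian $F$, the resulting Hamiltonian has \emph{no} $1$-periodic orbits in the region where $H_n$ is small, because its time-one map displaces $K$; the identity on $CF_w(H_n)$ then factors, up to homotopy and up to a $T$-weight bounded by the Hofer norm of $F$, through a complex with no generators of the relevant action range, and the generators living outside $K$ are disposed of by energy estimates together with the completion (this is where the real work lies, and it is exactly the uniformity-in-$n$ issue you flag but do not resolve --- for an arbitrary compact $K$ there is no index-boundedness or contact-type boundary available, and the family is $\mathcal{H}_K$, not $\mathcal{H}_K^{\text{Cont}}$). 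Two smaller points: the weight you can extract is $T^{e(K)+\epsilon}$, not $T^{e(K)-\epsilon}$, since a displacing Hamiltonian has Hofer norm at least $e(K)$ (harmless for the conclusion, but the claimed sharp identification in (i) is not available); and what the continuation argument gives is an upper bound on the energy shift, not an identification of $\Psi_H$ with multiplication by an exact power of $T$. So the skeleton matches the known proof, but the step that makes the argument work is misattributed and would fail as written.
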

Corollary \ref{bothdie} implies that $S^1$-equivariant relative symplectic cohomology inherits the ability to detect displaceability for compact domains with contact type boundary.
\begin{corollary}
   Let $(M, \omega)$ be a closed symplectic manifold and let $K \subset M$ be a compact domain with contact type boundary. If $K$ is displaceable, then $SH^{S^1}_M(K; \Lambda) = 0$.\\ \qed
\end{corollary}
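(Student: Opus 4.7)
The plan is to combine the two prior results directly. Applying Theorem \ref{dis} to the displaceable compact subset $K$ yields $SH_M(K; \Lambda) = 0$. Since $K$ is additionally assumed to be a compact domain with contact type boundary, the hypotheses of Corollary \ref{bothdie} are satisfied, and the equivalence $SH_M(K;\Lambda) = 0 \iff SH^{S^1}_M(K;\Lambda) = 0$ established there immediately gives $SH^{S^1}_M(K;\Lambda) = 0$.

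The real content is not in this corollary but in Corollary \ref{bothdie}, whose proof rests on the Gysin exact triangle of Theorem \ref{similars1}(a) together with the spectral sequence of Theorem \ref{similars1}(c). There is no genuine obstacle at this level; the present corollary is simply the transport of the general principle \emph{displaceable implies vanishing of symplectic cohomology} from the non-equivariant to the $S^1$-equivariant setting, using the contact type boundary hypothesis solely to invoke Corollary \ref{bothdie}.
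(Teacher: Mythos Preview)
Your proof is correct and matches the paper's approach exactly: the paper also derives this corollary immediately from Theorem \ref{dis} and Corollary \ref{bothdie}, with no additional argument.
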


Another remarkable feature of Varolgunes's relative symplectic cohomology is that it satisfies a Mayer-Vietoris property under suitable assumptions.
Let $K_1$ and $K_2$ be a compact subset of $M$. We say that $K_1$ and $K_2$ \textbf{Poisson-commute} if there exist Poisson-commuting smooth functions $f_1, f_2 : M \to [0, 1]$ such that $f_1^{-1}(0) = K_1$ and $f_2^{-1}(0) = K_2$. 
\begin{theorem}[Varolgunes \cite{v}]
    Let $(M, \omega)$ be a closed symplectic manifold and $K_1, K_2 \subset M$ be compact subsets. If $K_1$ and $K_2$ Poisson-commute, then there exsits a Mayer-Vietoris exact triangle as follows.
    \begin{align}
        \includegraphics[scale=1.2]{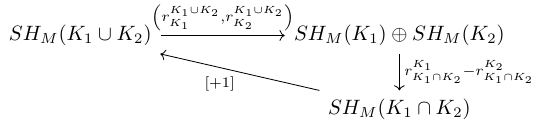}
    \end{align}
    Moreover, this Mayer-Vietoris sequence still holds if we replace coefficient ring $\Lambda_{\geq 0}$ by $\Lambda$.\\
\end{theorem}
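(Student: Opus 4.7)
The plan is to construct, for each term of a coherent family of Hamiltonians, a short exact sequence of (truncated and $T$-adically completed) Floer chain complexes that encodes Mayer--Vietoris at the level of the Hamiltonian, and then pass to the colimit and extract the long exact triangle in cohomology.

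First, using the Poisson-commuting pair $(f_1,f_2)$, I would simultaneously produce acceleration data for all four subsets. Fix an increasing sequence $\phi_n : [0,1] \to \RR_{\geq 0}$ of smooth convex functions vanishing on shrinking neighborhoods of $0$ and blowing up away from $0$, and set
\begin{align*}
H_n^{K_i} &:= \phi_n(f_i), \qquad i=1,2, \\
H_n^{K_1 \cap K_2} &:= \phi_n(f_1)+\phi_n(f_2), \\
H_n^{K_1 \cup K_2} &:= \text{(smooth approximation of) } \min\bigl\{\phi_n(f_1),\phi_n(f_2)\bigr\}.
\end{align*}
Each family is cofinal in the admissible Hamiltonians for its corresponding subset, and the pointwise inequalities $H_n^{K_1 \cup K_2}\leq H_n^{K_i}\leq H_n^{K_1 \cap K_2}$ give monotone continuation homomorphisms. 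Crucially, $\{f_1,f_2\}=0$ makes both $f_1$ and $f_2$ first integrals for the Hamiltonian flow of any function of $(f_1,f_2)$, so the 1-periodic orbits of each $H_n$ above lie in specific fibers of the moment map $(f_1,f_2):M\to[0,1]^2$.

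Second, for each $n$, I would use this invariance to build the short exact sequence of Floer chain complexes
\begin{equation*}
 0 \longrightarrow CF(H_n^{K_1 \cup K_2}) \longrightarrow CF(H_n^{K_1})\oplus CF(H_n^{K_2}) \longrightarrow CF(H_n^{K_1 \cap K_2}) \longrightarrow 0,
\end{equation*}
with maps built from the continuation homomorphisms above (the first being the diagonal, the second the difference). Strict exactness at the chain level hinges on (i) decomposing the generators of the middle complex according to the fiber of $(f_1,f_2)$ in which each orbit lives, and (ii) proving that Floer cylinders for a Hamiltonian of the form $G(f_1,f_2)$ are trapped in a level set of each $f_i$, so that the differential respects the decomposition. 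This is a no-escape and maximum-principle argument exploiting the integrals $f_1,f_2$.

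Third, I would take the colimit over $n$ and the Novikov completion. Both operations are exact on the system at hand (filtered colimit, and $T$-adic completion of free $\Lambda_{\geq 0}$-modules), so the short exact sequence persists through these limits and its long exact sequence in cohomology is the Mayer--Vietoris triangle of the statement. The $\Lambda$-coefficient version follows by tensoring once more with $\Lambda$, which is flat over $\Lambda_{\geq 0}$. The main obstacle is the second step: making the short exact sequence genuinely exact on the chain level. The Poisson-commutation is precisely what is needed to control Floer trajectories, but one must also handle the non-smoothness of $\min\{\phi_n(f_1),\phi_n(f_2)\}$ (and, in a dual variant, of a corresponding $\max$) without destroying the commuting-flows structure on which the no-escape estimate rests. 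Arranging these smoothings while preserving a clean decomposition of generators and differentials is the technical heart of the argument.
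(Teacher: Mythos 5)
There is a genuine gap, and it sits exactly where you locate the ``technical heart'': your second step, a \emph{strict} short exact sequence of Floer chain complexes
\begin{align*}
 0 \lr CF(H_n^{K_1 \cup K_2}) \lr CF(H_n^{K_1})\oplus CF(H_n^{K_2}) \lr CF(H_n^{K_1 \cap K_2}) \lr 0,
\end{align*}
is not something one can arrange. The maps here are continuation maps, which are defined by counting solutions of $s$-dependent Floer equations; they are not inclusions or projections of generators, and there is no choice of Hamiltonians making the diagonal map injective with image equal to the kernel of the difference map at the chain level. Even in the favorable situation where the generators of $CF(H_n^{K_i})$ literally decompose according to regions of $[0,1]^2$ under $(f_1,f_2)$, the continuation maps agree with the ``obvious'' identification of generators only to leading order in $T$; the higher-order terms destroy exactness. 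Relatedly, the assertion that Floer cylinders for a Hamiltonian of the form $G(f_1,f_2)$ are trapped in a level set of each $f_i$ is false: only the orbits lie in level sets, while trajectories move transversally to them, and the available confinement results (no-escape/locality lemmas) confine trajectories to regions, not to levels. Finally, treating completion as a formal step that merely ``persists'' an already exact sequence misses its essential role: the Mayer--Vietoris property genuinely fails before completion, and completion of $\Lambda_{\geq 0}$-modules is not an exact functor in general (one needs surjectivity/Mittag--Leffler arguments, as in the proofs of Propositions \ref{gysin} and \ref{relcl}).

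The actual argument of Varolgunes (which this paper invokes and adapts in Proposition \ref{mvs1}) is homotopical rather than exact-sequence-theoretic. One builds the $2$-cube of complexes whose vertices are the (telescoped, completed) Floer complexes of cofinal families for $K_1\cup K_2$, $K_1$, $K_2$, $K_1\cap K_2$ (with $\min$ and $\max$ of the two families at the outer corners), whose edges are continuation maps, and whose diagonal is a chain homotopy making the square commute up to homotopy; the theorem is that this cube is \emph{acyclic}, i.e.\ its iterated cone is acyclic after completion --- this is the definition of descent in \S 2. Acyclicity is proved by reducing modulo $T$: for a complete filtered complex it suffices to check acyclicity of the $T=0$ reduction, and at $T=0$ the continuation maps become identity-like on generators by an action/energy analysis, turning the statement into tractable algebra. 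The Mayer--Vietoris triangle is then the long exact sequence of the cone, exactly as in the proof of Proposition \ref{mv}(b), and the $\Lambda$-coefficient statement follows from flatness of $\Lambda$ over $\Lambda_{\geq 0}$. So the Poisson-commutation is indeed used to control the geometry of orbits and trajectories, but it feeds into an acyclic-cube (descent) argument, not into a chain-level short exact sequence; as proposed, your step two would fail.
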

The maps $r^{K_1\cup k_2}_{K_1}$, $r^{K_1\cup k_2}_{K_2}$, $r^{K_1}_{K_1 \cap K_2}$ and $r^{K_2}_{K_1 \cap K_2}$ are \textit{restriction maps} defined in \cite{v}. Generally speaking, if $K_1 \subset K_2$, there exists a restriction map $r^{K_1}_{K_2} : SH_M(K_1) \lr SH_M(K_2)$. Note that restriction maps are induced by continuation maps and hence they increase the action. 
The Mayer-Vietoris property still holds for $S^1$-equivariant relative symplectic cohomology. For $SH^{S^1}_M(K_1 \cup K_2)$ and $SH^{S^1}_M(K_1 \cap K_2)$ to make sense, let us make a following definition.
 \begin{definition}
        Let $(M, \omega)$ be a closed symplectic manifold and let $K_1, K_2 \subset M$ be compact domains with contact type boundary. We say that $(K_1, K_2)$ is a \textbf{contact pair} in $M$ if $K_1 \cup K_2$ and $K_1 \cap K_2$ are compact domains with contact type boundary.
       \end{definition}

\begin{theorem}[Proposition \ref{mvs1}]
     Let $(M, \omega)$ be a closed symplectic manifold and $K_1, K_2 \subset M$ be a compact domains with contact type boundaries. If $(K_1, K_2)$ is a contact pair and $K_1$ and $K_2$ Poisson-commute, then there exists a Mayer-Vietoris exact triangle for $S^1$-equivariant relative symplectic cohomology pertaining to $K_1$ and $K_2$.
    \begin{align*}
        \includegraphics[scale=1.23]{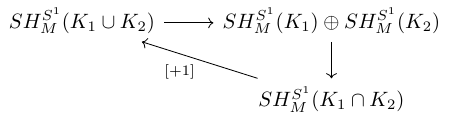}
    \end{align*}
   Moreover, this triangle stays exact if we replace coefficient ring $\Lambda_{\geq 0}$ by $\Lambda$. \\
\end{theorem}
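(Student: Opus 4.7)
The plan is to lift Varolgunes's chain-level Mayer--Vietoris construction to the $S^1$-equivariant setting. Recall that in the non-equivariant case Varolgunes builds, from the Poisson-commuting pair $f_1,f_2$, compatible cofinal families of Hamiltonians in $\mathcal{H}^{\mathrm{Cont}}_{K_1\cup K_2}$, $\mathcal{H}^{\mathrm{Cont}}_{K_1}$, $\mathcal{H}^{\mathrm{Cont}}_{K_2}$, and $\mathcal{H}^{\mathrm{Cont}}_{K_1\cap K_2}$ whose Hamiltonian orbits fit into a short exact sequence
\[
0 \lr CF_w(H_{K_1\cup K_2}) \lr CF_w(H_{K_1})\oplus CF_w(H_{K_2}) \lr CF_w(H_{K_1\cap K_2}) \lr 0
\]
of non-equivariant Floer complexes, whose maps are induced by the obvious inclusions of generating sets and whose differentials are compatible because of the action estimates coming from Poisson-commutativity. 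The contact pair hypothesis is exactly what makes all four cofinal families available.

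The next step is to promote this to a short exact sequence of $S^1$-equivariant complexes. I would use the $S^1$-equivariant model $CF^{S^1}_{M\times T^*S^{2N+1}}(\widetilde{H})$ introduced earlier, lifting each Hamiltonian $H$ on $M$ to a Hamiltonian $\widetilde H$ on $M\times T^*S^{2N+1}$ by adding a small Morse function on $S^{2N+1}$. Because $\Lambda_{\geq 0}[u]$ is free over $\Lambda_{\geq 0}$, tensoring preserves exactness; the real content is to check that the lifted Varolgunes restriction maps are chain maps with respect to the full parametrized Floer differential. This amounts to repeating Varolgunes's action estimates on $M\times T^*S^{2N+1}$, where the $T^*S^{2N+1}$ perturbation can be chosen of arbitrarily small Hofer norm so as not to spoil the relevant energy bounds. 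Passing to the limit $N\to\infty$ then produces the $S^1$-equivariant short exact sequence
\[
0 \lr CF^{S^1}_w(H_{K_1\cup K_2}) \lr CF^{S^1}_w(H_{K_1}) \oplus CF^{S^1}_w(H_{K_2}) \lr CF^{S^1}_w(H_{K_1\cap K_2}) \lr 0.
\]

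Finally, I would take the telescope over each cofinal family and apply degreewise $T$-adic completion. Direct limits and the tensor product with $\Lambda_{\geq 0}[u]$ both preserve exactness, and Varolgunes's completion argument (based on torsion-freeness and the preservation of the $T$-adic filtration by all maps involved) transfers directly to the $S^1$-equivariant setting because $\Lambda_{\geq 0}[u]$ is flat and respects the filtration. Passing to cohomology gives the desired Mayer--Vietoris triangle for $SH^{S^1}_M$, and the statement with coefficients in $\Lambda$ follows by flatness of $\Lambda$ over $\Lambda_{\geq 0}$.

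The main obstacle is the middle step: one must verify that the Varolgunes restriction maps, after being lifted to $M\times T^*S^{2N+1}$ and extended by $\Lambda_{\geq 0}[u]$-linearity, commute on the nose with the parametrized Floer differentials $d^{S^1}=d+u\delta_1+u^2\delta_2+\cdots$, rather than merely up to homotopy. This requires checking that each operator $\delta_i$, defined by counting Floer trajectories parametrized by $S^{2N+1}$, respects the generating-set partition coming from $f_1,f_2$. Since the $\delta_i$ are governed by the same Floer-theoretic action estimates as $\delta_0=d$, and the perturbation on $S^{2N+1}$ can be made arbitrarily small, the verification should reduce to Varolgunes's original computation, but this is the place where the argument must be carried out carefully.
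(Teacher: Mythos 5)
Your proposal rests on a premise that does not hold: in Varolgunes's work the Mayer--Vietoris property is \emph{not} obtained from a chain-level short exact sequence $0 \to CF_w(H_{K_1\cup K_2}) \to CF_w(H_{K_1})\oplus CF_w(H_{K_2}) \to CF_w(H_{K_1\cap K_2}) \to 0$ with maps given by inclusions of generating sets. The four Floer complexes have different generators (orbits of different Hamiltonians), the maps between them are continuation maps, and the resulting square commutes only up to a specified chain homotopy; the actual input is the statement that the resulting $2$-cube (square of continuation maps together with the diagonal homotopy) is \emph{acyclic}, and the exact triangle is extracted from the cone long exact sequence after taking telescopes and completion, as in Proposition \ref{mv}. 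Since the short exact sequence you want to lift does not exist in the non-equivariant setting, the plan of ``promoting it'' to the equivariant model on $M\times T^*S^{2N+1}$ cannot be carried out as stated, and your acknowledged ``main obstacle'' -- that the equivariant operators $\delta_i$ commute on the nose with restriction maps respecting a generating-set partition -- is not a technical loose end but the missing core of the argument; it is also not the right question to ask, because even in the non-equivariant case strict compatibility fails and only the homotopy-coherent (cube) statement is true.

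What the paper does instead, and what your write-up is missing, is an algebraic reduction that avoids redoing any action estimates: one takes Varolgunes's cofinal families (which may be assumed to lie in $\mathcal{H}^{\text{Cont}}_{K_i}$, with $\min$ and $\max$ cofinal for $K_1\cup K_2$ and $K_1\cap K_2$ -- this is where the contact pair hypothesis enters, as you correctly noted), tensors the whole $2$-cube with $\Lambda_{\geq 0}[u]$ to get the equivariant cube, and observes that acyclicity may be checked after setting $T=0$. Modulo $T$ the diagonal map may be taken to be zero and the remaining maps send $u^k\otimes x$ to $u^k\otimes x$, so the maximally iterated cone of the equivariant cube becomes $\Lambda_{\geq 0}[u]\otimes \mathbf{C}$ with differential $\mathrm{id}\otimes d$, where $(\mathbf{C},d)$ is the reduction of Varolgunes's acyclic cone; acyclicity is then immediate, survives telescope and completion, and yields the triangle, with the $\Lambda$-coefficient statement following from flatness (this last step of yours is fine). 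Without either this mod-$T$ reduction or a genuine verification of an equivariant analogue of Varolgunes's acyclicity argument, your proof is incomplete.
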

Relative symplectic cohomology is also developed and applied by Dickstein, Ganor, Polterovich and Zapolsky in \cite{dgpz}. One of the results established is that when the ambient symplectic manifold is symplectically aspherical\footnote{We say that a symplectic manifold $(M, \omega)$ is \textbf{symplectically aspherical} if $\omega|_{\pi_2(M)} = 0$ and $c_1(TM)|_{\pi_2(M)} = 0$ where $c_1(TM)$ is the first Chern class of the tangent bundle $TM$ of $M$.} the relative symplectic cohomology $SH_M(K; \Lambda)$ does not depend on $K$ under some natural conditions. More precisely, they prove the following.
\begin{theorem}[Dickstein, Ganor, Polterovich and Zapolsky \cite{dgpz}]
   Let $(M, \omega)$ be a closed symplectic manifold which is symplectically aspherical. Let $K \subset M$ be a compact domain with contact type boundary. If $\partial K$ is index-bounded and the map $\pi_1(\partial K) \to \pi_1(M)$ induced by the inclusion $\partial K \xhookrightarrow{} M$ is injective\footnote{We usually say that $\partial K$ is \textbf{incompressible} in M.}, then there exists a canonical isomorphism 
   \begin{align*}
       SH_M(K; \Lambda) \cong SH(K; \Lambda)
   \end{align*}
   where $SH(K; \Lambda)$ stands for the symplectic cohomology of $K$ as a symplectic manifold with contact type boundary.\\
\end{theorem}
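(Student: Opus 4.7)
The plan is to construct a natural chain-level comparison between the telescope complex defining $SH_M(K;\Lambda)$ and the direct-limit complex defining $SH(K;\Lambda)$, and to argue that it is a quasi-isomorphism via a confinement argument for Floer data. Let $\widehat{K}=K\cup_{\partial K}(\partial K\times[1,\infty))$ be the symplectic completion of $K$, so that $SH(K;\Lambda)$ is computed from a direct system of Floer complexes $CF(\widehat{H}_n)$ for Hamiltonians on $\widehat{K}$ with increasing linear slope on the cylindrical end. First I would choose a cofinal family of acceleration data $(H_n)\subset\mathcal{H}_K^{\mathrm{Cont}}$ on $M$ such that, on a neighborhood $U_n$ of $K$ obtained by gluing a collar of $\partial K$ into $M$ via the contact-type identification, $H_n$ agrees with $\widehat{H}_n$. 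Every $1$-periodic orbit of $H_n$ lying in $U_n$ is then tautologically identified with a $1$-periodic orbit of $\widehat{H}_n$.

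Next I would show that orbits of $H_n$ in $M\setminus U_n$ and Floer trajectories leaving $U_n$ contribute nothing. The orbits in $M\setminus U_n$ can be arranged to be constant on an autonomous plateau region where the localized Floer complex is acyclic over $\Lambda$; their contribution is washed out after tensoring with $\Lambda$. The core of the argument is the confinement lemma: any Floer trajectory of $H_n$, and of the interpolating data used in continuation maps, whose asymptotes lie in $U_n$ stays inside $U_n$. This rests on three ingredients: an integrated maximum principle at the contact-type boundary $\partial U_n$ ruling out outward crossings; symplectic asphericity, which excludes sphere bubbling via $\omega|_{\pi_2(M)}=0$ and makes the $\ZZ$-gradings match via $c_1(TM)|_{\pi_2(M)}=0$; and incompressibility of $\partial K$ in $M$, which identifies homotopy classes of Floer cylinders in $M$ with those in $\widehat{K}$ and thus prevents trajectories from realizing a relative class invisible from inside.

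Once confinement is established, the identification on the $U_n$-part assembles into chain maps $CF_w(H_n)\to CF(\widehat{H}_n)$ and back, compatible with continuation; telescoping and passing to the direct limit then yields the desired isomorphism $SH_M(K;\Lambda)\cong SH(K;\Lambda)$. The hard part will be the confinement step: ensuring the integrated maximum principle holds uniformly across the entire continuation family used in the telescope construction, not just for a single $H_n$. This requires a delicate choice of interpolating Hamiltonian so that its vector field remains radial near $\partial U_n$ throughout the homotopy, and it is precisely here that index-boundedness (to tame the high-action contribution near $\partial K$), asphericity (to control bubbling and grading), and incompressibility (to control the topology of trajectories) combine. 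The extension from $\Lambda_{\geq 0}$-coefficients to $\Lambda$-coefficients is then formal.
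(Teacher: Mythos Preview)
Your proposal has a genuine gap in its handling of the ``upper'' orbits of $H_n$---those lying on the concave portion of the collar and on the plateau outside $K$. These cannot be arranged to consist only of constant orbits on an acyclic plateau: the concave part of each $H_n\in\mathcal{H}_K^{\mathrm{Cont}}$ inevitably produces nonconstant $1$-periodic orbits corresponding to Reeb orbits of $\partial K$, and the resulting upper-orbit complex is not acyclic in any elementary sense. Moreover, an integrated maximum principle on the closed manifold $M$ does not confine trajectories to a collar neighborhood of $K$: the Hamiltonian is concave (not convex) on part of the collar, and the complement $M\setminus U_n$ is compact rather than a cylindrical end, so there is no barrier preventing trajectories from crossing into the upper region.

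The argument the paper uses (for the $S^1$-equivariant analogue, Proposition~\ref{relcl}, which follows \cite{dgpz}) is structurally different. One first observes that the upper orbits form a \emph{subcomplex} $CF_{w,U}(H_n)$, because the Floer differential increases action and upper orbits have larger action than lower ones. The key step is then to show that $\widehat{\varinjlim_n}\,CF_{w,U}(H_n)=0$. This is exactly where index-boundedness does its work (Remark~\ref{kill}, Remark~\ref{homkill}): it bounds the primitive-of-$\omega$ term in the topological energy of continuation trajectories landing on upper orbits, while the Hamiltonian term $\int H_{n+1}-\int H_n$ grows without bound on $M\setminus K$; hence the $T$-exponents on such continuation maps tend to infinity, and \emph{completion} annihilates these contributions. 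The quotient complex of lower orbits is already complete and is identified with the classical Floer complex on $\widehat{K}$; incompressibility of $\partial K$ is used only to match the notions of contractible orbit in $M$ and in $\widehat{K}$. So the ingredients you list are correct, but their roles are not: index-boundedness is not a technical side condition for the maximum principle---it is the mechanism that kills the upper orbits, and it acts through completion rather than through trajectory confinement.
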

A similar result holds for $S^1$-equivariant relative symplectic cohomology.
\begin{theorem}[Proposition \ref{relcl}]\label{identity}
   Let $(M, \omega)$ be a closed symplectic manifold which is symplectically aspherical. Let $K \subset M$ be a Liouville domain. If $\partial K$ is index-bounded and the map $\pi_1(\partial K) \to \pi_1(M)$ induced by the inclusion is injective, then there is an isomorphism
    \begin{align}\label{isom}
        SH^{S^1}_M(K ; \Lambda) \cong SH^{S^1}(K; \Lambda)
    \end{align}
    where $SH^{S^1}(K; \Lambda)$ stands for the $S^1$-equivariant symplectic cohomology of $K$ as a symplectic manifold with contact type boundary. Moreover, we have a following commutative diagram
    \begin{center}
    \includegraphics[scale=1.2]{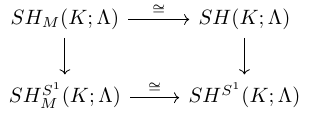}
    \end{center}
    Here, top horizontal arrow corresponds to the canonical isomorphism from \cite{dgpz} and the vertical maps are both induced by the chain level map $x \mapsto 1 \otimes x$.\\
\end{theorem}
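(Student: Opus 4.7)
The plan is to adapt the non-equivariant argument of Dickstein--Ganor--Polterovich--Zapolsky \cite{dgpz} to the $S^1$-equivariant chain complex $CF^{S^1}_w(H) = \Lambda_{\geq 0}[u] \ton CF_w(H)$ built in \S3. The DGPZ isomorphism is ultimately a statement about a cofinal family of admissible Hamiltonians $H_n \in \mathcal{H}_K^{\text{Cont}}$ whose 1-periodic orbits and Floer trajectories, under the symplectic asphericity, index-boundedness, and $\pi_1$-injectivity assumptions, are forced by maximum-principle and energy estimates to be confined to a neighborhood of $K$ and to match those for corresponding admissible Hamiltonians on the completion $\hat K$.

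Given such a cofinal family, I would first promote the orbit-by-orbit identification $CF_w(H_n) \cong CF(H_n|_{\hat K})$ to the level of $S^1$-equivariant complexes. Writing the $S^1$-equivariant differential as $\partial_0 + u\partial_1 + u^2\partial_2 + \cdots$, where $\partial_k$ counts parametrized cascade trajectories weighted by the $u$-variable, I would show that each $\partial_k$ is computed by the same moduli space on $M$ and on $\hat K$, by extending the confinement argument of DGPZ to the parametrized Floer data defining $\partial_k$. After applying the weighted telescope construction $\widehat{\varinjlim}$ and tensoring with $\Lambda$, this yields the isomorphism \eqref{isom}. For commutativity of the diagram, the vertical $x \mapsto 1 \otimes x$ maps identify $CF_w(H_n)$ (respectively $CF(H_n|_{\hat K})$) with the $u^0$-summand of the corresponding $S^1$-equivariant complex, and this inclusion is tautologically compatible with the generator-by-generator identification used to produce the horizontal arrows.

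The principal obstacle is the extension of the DGPZ confinement to the parametrized cascade moduli spaces that define $\partial_k$ for $k \geq 1$. One must choose $S^1$-equivariant families of almost complex structures and Hamiltonian perturbations which are uniformly of contact type in a collar of $\partial K$ as the parametrization varies, so that the $L^\infty$-estimates underlying the DGPZ argument apply verbatim to the cascade solutions. Once this uniform maximum principle is established, the remaining steps, including the compatibility with continuation maps used to take direct and inverse limits, follow by the Bourgeois--Oancea formalism \cite{bo} adapted in \S3 to the relative setting, and the commutativity of the square is then automatic from the definitions of the two vertical arrows.
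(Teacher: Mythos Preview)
Your proposal differs from the paper's argument in a significant structural way, and the difference exposes a genuine gap in what you have written.

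The paper does not attempt a direct orbit-by-orbit identification $CF_w(H_n) \cong CF(H_n|_{\hat K})$. That identification is false as stated: Hamiltonians in $\mathcal{H}_K^{\text{Cont}}$ are S-shaped on $M$, so in addition to the lower orbits near $\partial K$ they carry \emph{upper} orbits near the concave region, which have no counterpart for admissible Hamiltonians on $\hat K$. The mechanism that removes these upper orbits is not trajectory confinement but the \emph{completion} $\widehat{\varinjlim}$ built into the definition of $SH^{S^1}_M(K)$. The paper makes this explicit: it sets up the short exact sequence
\[
0 \lr CF^{S^1}_{w,U}(H_n) \lr CF^{S^1}_w(H_n) \lr CF^{S^1}_{w,L}(H_n) \lr 0,
\]
checks via flatness of the quotient and Mittag--Leffler that this survives $\widehat{\varinjlim}$, and then invokes the index-boundedness energy estimate (Remarks~\ref{kill} and~\ref{homkill}) to conclude $\widehat{\varinjlim}\,CF^{S^1}_{w,U}(H_n)=0$. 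Only after this does one have a complex generated by lower orbits alone, and that complex is already complete, so one may drop the hat and compare to the classical picture. Your proposal skips this entire step; you should say how the upper orbits disappear before any matching of complexes can be attempted.

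That said, the issue you single out---extending the confinement of Floer trajectories to the parametrized families defining the higher components $\partial_k$ of $d^{S^1}_w$---is real and is the genuine $S^1$-equivariant content of the last step in the paper's chain of isomorphisms (the passage from $\varinjlim CF^{S^1}_{w,L}(H_n)\ton\Lambda$ to $SH^{S^1}(K;\Lambda)$). The paper handles this somewhat tersely by citing \cite{dgpz}; your instinct that one must choose the $S^1$-parametrized Floer data to be of contact type uniformly so that the maximum-principle arguments apply to the cascade moduli spaces is correct, and would flesh out that citation. So the geometric part of your plan is sound, but it needs to be preceded by the algebraic upper/lower decomposition and completion argument that the paper supplies.
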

Given Theorem \ref{identity}, it is natural to ask if the identity \eqref{isom} holds for every compact domain K with  contact type boundary. That is, one might ask if $SH^{S^1}_M(K ; \Lambda)$ is truly relative. The following example, which follows from a calculation in the thesis of Varolgunes, \cite{vt}, confirms that it is.

\begin{example}
    Let $(S^2,\omega)$ be the 2-dimensional sphere equipped with an area form $\omega$ with total area 1. Let $D_{\Delta} \subset S^2$ be a smooth disk of area $\Delta$. 
    \begin{itemize}
        \item The symplectic cohomology $SH(D_{\Delta};\Lambda)$ and $S^1$-equivariant symplectic cohomology $SH^{S^1}(D_{\Delta}; \Lambda)$ of $D_{\Delta}$ as a Liouville domain are zero regardless of its size.\\
        \item The relative symplectic cohomology $SH_{S^2}(D_{\Delta} ; \Lambda)$ is given by 
        \begin{align*}
             SH_{S^2}(D_{\Delta}; \Lambda) = \begin{cases}
            0 &\text{if}\,\, \Delta < \frac{1}{2}\\
            \Lambda \oplus \Lambda &\text{if}\,\, \Delta \geq \frac{1}{2}.
        \end{cases}\\
        \end{align*}
        \item The $S^1$-equivariant relative symplectic cohomology $SH^{S^1}_{S^2}(D_{\Delta}; \Lambda)$ is given by
        \begin{align*}
        SH^{S^1}_{S^2}(D_{\Delta}; \Lambda) = \begin{cases}
            0 &\text{if}\,\, \Delta < \frac{1}{2}\\
            \Lambda[u] \oplus \Lambda[u] &\text{if}\,\, \Delta \geq \frac{1}{2},
        \end{cases}
    \end{align*}
    where $u$ is a formal variable of degree 2. For more detailed exposition, see Proposition \ref{diskinsphere}.\\
    \end{itemize}
    
\end{example}
\subsection{Symplectic capacities}

One nice application of symplectic cohomology is to construct a symplectic capacity. Let us recall the definition of symplectic capacity. A \textbf{symplectic capacity} $c$ assigns to each symplectic manifold $(X, \omega)$ a number $c(X, \omega) \in [0, \infty]$ satisfying
\begin{itemize}
    \item (Monotonicity) if $(X, \omega) $ and $(X', \omega')$ have the same dimension and there exists a symplectic embedding $(X, \omega) \hookrightarrow (X', \omega')$, then $c(X, \omega) \leq c(X', \omega')$, and\\
    \item (Conformality) if $r > 0$, then $c(X, r \omega) = r c(X, \omega)$.\\
    
\end{itemize}

We can also define relative symplectic capacity as follows. 
\begin{definition}
Let $(M, \omega)$ be a symplectic manifold and let $K \subset M$ be a subset. A \textbf{relative symplectic capacity} $c$ assigns to each triple $(M, K, \omega)$ a number $c(M, K, \omega) \in [0, \infty]$ satisfying
\begin{itemize}
    \item (Monotonicity) if $(M, \omega) $ and $(M', \omega')$ have the same dimension and if there exists a symplectic embedding $\phi : (M, \omega) \hookrightarrow    (M', \omega')$ such that $\text{int}(\phi(K)) \subset K'$, then $c(M, K, \omega) \leq c(M', K', \omega')$, and\\
    \item (Conformality) if $r > 0$, then $c(M, K, r \omega) = r c(M, K, \omega)$.
    
\end{itemize}
We will usually drop the symplectic form $\omega$ in $c(M, K, \omega)$ if it is clear from the context. \\
\end{definition}
\subsubsection{Basic properties}

The first symplectic capcity defined using symplectic homology can be found in the paper \cite{fhw} of  Floer, Hofer and Wysocki. They defined symplectic capacities of open sets in $\CC^n$, which is usually called a symplectic homology capacity. We denote this symplectic homology capacity by $c^{SH}$. This definition is extended to a compact symplectic manifold with contact type boundary $W$ also utilizing the symplectic homology $SH(W)$ of $W$. This generalization is well explained in \cite{gs}. Gutt and Hutchings constructed in \cite{gh} a sequence of symplectic capacities, denoted by $c_k^{GH}$ for $k \geq 1$, of Liouville domains as an application of $S^1$-equivariant symplectic homology. Here, we show that relative symplectic cohomology and $S^1$-equivairant relative symplectic cohomology can be used, in an analogous way, to define relative symplectic capacities. 
\begin{theorem}
   Let $(M, \omega)$ be a closed symplectic manifold which is symplectically aspherical. Let $K \subset M$ be a compact domain with contact type and index-bounded boundary.
    \begin{enumerate}[label=(\alph*)]
        \item (Proposition \ref{ghc}) There exists a relative Gutt-Hutchings capacity $c_k^{GH}(M, K)$ for each $k=1,2,3,\cdots$.\\
        \item (Proposition \ref{shc}) There exists a relative symplectic (co)homology capacity $c^{SH}(M,K)$.\\
    \end{enumerate}
\end{theorem}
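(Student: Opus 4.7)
The plan is to imitate the constructions of Floer--Hofer--Wysocki \cite{fhw,gs} and Gutt--Hutchings \cite{gh}, replacing the action-filtered absolute (equivariant) symplectic cohomology by the relative theory developed in this paper. I sketch (a); part (b) follows the same recipe using the non-equivariant theory and the unit class in place of its equivariant Gysin image.

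First assemble the filtered input. The subcomplexes generated by Hamiltonian orbits of action greater than $L$ produce, in the colimit, canonical ``forget the filtration'' maps
\[
j^L : SH^{S^1, >L}_M(K;\Lambda) \lr SH^{S^1}_M(K;\Lambda), \qquad L \in \RR,
\]
with $\operatorname{image}(j^L) \supseteq \operatorname{image}(j^{L'})$ whenever $L < L'$. Let $1_K \in SH_M(K;\Lambda)$ denote the canonical unit of Varolgunes's relative theory, let $\delta : SH_M(K;\Lambda) \to SH^{S^1}_M(K;\Lambda)$ be the Gysin map from part (a) of Theorem \ref{similars1}, and use the $\Lambda[u]$-module structure to set
\[
c_k^{GH}(M, K) \;:=\; \inf\bigl\{\, a > 0 \;\bigm|\; u^{k-1}\cdot \delta(1_K) \in \operatorname{image}(j^{-a}) \,\bigr\},
\]
with the convention $\inf \emptyset = +\infty$. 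For (b), take instead $c^{SH}(M,K) := \inf\{a > 0 \mid 1_K \in \operatorname{image}(j^{-a}_{\mathrm{nonequiv}})\}$.

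Conformality is immediate: rescaling $\omega \mapsto r\omega$ rescales every Hamiltonian action by $r$, carries the window $>L$ to $>rL$, and therefore multiplies the defining infimum by $r$. Monotonicity is the actual content. Given a symplectic embedding $\phi : (M,\omega) \hookrightarrow (M', \omega')$ with $\operatorname{int}(\phi(K)) \subset K'$, the desired inequality will follow once one produces a comparison homomorphism $\Phi_* : SH^{S^1}_{M'}(K';\Lambda) \to SH^{S^1}_M(K;\Lambda)$ that (i) sends the distinguished class to the distinguished class, and (ii) restricts to maps $SH^{S^1, >-a}_{M'}(K';\Lambda) \to SH^{S^1, >-a}_M(K;\Lambda)$ for every $a > 0$. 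When $M = M'$ and $\phi$ is the inclusion this $\Phi_*$ is the $S^1$-equivariant enhancement of Varolgunes's restriction map $r^{K'}_K$, automatic from the construction of $SH^{S^1}_M$ as the homology of an $S^1$-equivariant complex together with the $S^1$-equivariance of the underlying continuation maps; the filtered refinement (ii) is the fact that continuation maps are action-non-increasing in the appropriate direction. The hard part will be the case $M \neq M'$: here one wants to couple the restriction maps of \cite{v} with a suitable neck-stretching/thickening of $\phi(K)$ inside $M'$, then use the index-boundedness hypothesis (via the techniques underlying Theorem \ref{identity}) to reduce the comparison to the Viterbo-type transfer on the absolute Liouville theories, which is known to be action-non-increasing and unital. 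Granted such a $\Phi_*$, monotonicity of both $c_k^{GH}(M,K)$ and $c^{SH}(M,K)$ is formal from the definitions.
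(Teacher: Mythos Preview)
Your definitions collapse: as written, both $c^{SH}(M,K)$ and every $c_k^{GH}(M,K)$ equal $0$. The problem is the choice of distinguished class together with the direction of the filtration. In the paper's convention the constant orbits sit at action arbitrarily close to $0$ from below, while the nonconstant lower orbits carry strictly negative action; the unit $1_K\in SH_M(K;\Lambda)$ is therefore already represented in $CF_w^{>-\epsilon}$ for every $\epsilon>0$, and so lies in $\operatorname{image}(j^{-a})$ for all $a>0$. The Gysin map $\delta$ you invoke (Theorem~\ref{similars1}(a)) is the chain-level map $x\mapsto 1\otimes x$, which preserves the action filtration, and multiplication by $u$ does as well; hence $u^{k-1}\cdot\delta(1_K)$ is again in $\operatorname{image}(j^{-a})$ for all $a>0$, and your infimum is $0$.

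What is needed is the \emph{negative} theory $SH^{S^1,-,>L}_M(K;\Lambda)$ of Definition~\ref{actionhom} (the quotient by the constant-orbit subcomplex), together with the connecting homomorphism $\delta:SH^{S^1,-}_M(K;\Lambda)\to H(K,\partial K;\Lambda)\otimes H(BS^1;\Lambda)$ of Proposition~\ref{building}(c) --- a different $\delta$ from the Gysin map. The paper then sets
\[
c_k^{GH}(M,K)=-\sup\Bigl\{L<0\;\Bigm|\;\exists\,\alpha\in SH^{S^1,-,>L}_M(K;\Lambda):\ \delta\,U^{k-1}\iota_L(\alpha)=[\omega^n|_K]\otimes[\mathrm{pt}]\Bigr\},
\]
which asks how far down in action one must go before a nonconstant orbit hits the fundamental class of $(K,\partial K)$. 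Equivalently (Proposition~\ref{firstghpro}, Remark~\ref{altsh}) one asks for the largest $L$ at which $[\omega^n|_K]$ dies under $j_L:H(K,\partial K;\Lambda)\to SH^{>L}_M(K;\Lambda)$; note this map points in the opposite direction from your $j^{-a}$. Your sketch of the monotonicity argument via a filtration-preserving, unit-respecting transfer is correct in spirit and is exactly what the paper carries out in Proposition~\ref{building2} using the staircase Hamiltonians $\mathcal H_{\mathrm{stair}}(K,K',M')$, but it only has content once the capacities are defined with the negative theory and the class $[\omega^n|_K]\otimes[\mathrm{pt}]$ rather than the unit.
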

Basic properties of $c_k^{GH}(M,K)$ and $c^{SH}(M, K)$ are given as follows.
\begin{theorem}\label{propertyofcap}
Let $(M, \omega)$ be a closed symplectic manifold which is symplectically aspherical. Let $K \subset M$ be a compact domain with contact type and index-bounded boundary.
\begin{enumerate}[label=(\alph*)]
    \item (Proposition \ref{ghc}) $c_k^{GH}(M,K) \leq c_{k+1}^{GH}(M,K)$ for all $k \geq 1$.\\
    \item (Proposition \ref{disjoint}, \ref{disjointsh}) If $K_1, \dots, K_{\ell} \subset M$ are disjoint Liouville domain with index-bounded boundaries, then
    \begin{align*}
        c_k^{GH}\left(M,\,\, \coprod_{i=1}^l K_i\right) = \max\left\{c_k^{GH}(M, K_1), \cdots, c_k^{GH}(M, K_{\ell}) \right\}
    \end{align*}
    for each $k\geq 1$ and 
    \begin{align*}
        c^{SH}\left(M,\,\, \coprod_{i=1}^l K_i\right) = \max\left\{c^{SH}(M, K_1), \cdots, c^{SH}(M, K_{\ell}) \right\}.\\
    \end{align*}
    
    \end{enumerate}    
\end{theorem}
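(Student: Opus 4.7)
The plan is to deduce both statements from formal properties of $SH^{S^1}_M(K;\Lambda)$ together with its action filtration, adapting the strategy of Gutt--Hutchings from \cite{gh} to the relative setting. Two structural ingredients will be used: the natural degree $-2$ operator $U$ on $S^1$-equivariant relative symplectic cohomology (induced by the formal variable $u$), and a chain-level decomposition of the $S^1$-equivariant Floer complex for disjoint compact domains.

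For part (a), I would begin by recalling from Proposition \ref{ghc} that the relative Gutt-Hutchings capacity $c_k^{GH}(M,K)$ is defined as the infimum of values $L$ for which a distinguished class built from the $k$-th iterate of $U$ applied to a canonical class admits a representative in the appropriate action-filtered piece (paralleling the original construction of \cite{gh}). The key point is that $U$ respects the action filtration, inducing maps on the $>L$ truncations compatible with the canonical map to $SH^{S^1}_M(K;\Lambda)$. Consequently, a witness $\beta$ of the $(k{+}1)$-th capacity at action level $L$ yields a witness $U\beta$ of the $k$-th capacity at the same action level, because $U^{k-1}(U\beta) = U^k\beta$ equals the fixed target class and because $U\beta$ inherits the action bound of $\beta$. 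Taking infima gives $c_k^{GH}(M,K) \leq c_{k+1}^{GH}(M,K)$.

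For part (b), I would first establish the splitting
\[
SH^{S^1}_M\!\left(\coprod_{i=1}^{\ell} K_i;\Lambda\right) \;\cong\; \bigoplus_{i=1}^{\ell} SH^{S^1}_M(K_i;\Lambda),
\]
together with its action-filtered analogue, exploiting pairwise disjointness of the $K_i$. Since the $K_i$ admit pairwise disjoint neighborhoods, one can build a cofinal family in $\mathcal{H}^{\mathrm{Cont}}_{\coprod K_i}$ consisting of Hamiltonians of the form $H = \sum_i H_i$ with each $H_i \in \mathcal{H}^{\mathrm{Cont}}_{K_i}$ supported near $K_i$; the complex $CF^{S^1}_w(H)$ then decomposes as a direct sum indexed by the $K_i$, and this decomposition survives completion and the telescope construction. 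The same argument applies verbatim for the non-equivariant complex $CF_w(H)$ underlying $SH_M$, and is compatible with passage to $\Lambda$-coefficients.

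Under this splitting the canonical class for $\coprod_i K_i$ becomes the tuple of canonical classes for the individual $K_i$, and a representative at action level $L$ exists for the coproduct if and only if such a representative exists in each summand. Taking the infimum over admissible $L$ then yields the stated maximum formula for both $c_k^{GH}$ and $c^{SH}$. The main technical point requiring care will be verifying that the split family $\{\sum_i H_i\}$ is genuinely cofinal in $\mathcal{H}^{\mathrm{Cont}}_{\coprod K_i}$ and that the chain-level splitting survives all of the completions and limits used in the definition of $SH^{S^1}_M$; these are routine but somewhat tedious adaptations of the analogous arguments for non-equivariant, non-relative symplectic cohomology of disjoint Liouville domains.
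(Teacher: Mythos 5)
Your argument for part (a) is correct and is essentially the paper's proof: given $\alpha$ with $\delta U^{k}\iota_L(\alpha)=[\omega^n|_K]\otimes[\text{pt}]$, the class $U_L\alpha$ lives in the same filtered group and witnesses the $k$-th capacity at the same level, using that $U_L$ commutes with $\iota_L$ (Proposition \ref{building}(b)); taking suprema gives $c_k^{GH}\leq c_{k+1}^{GH}$.

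For part (b) your high-level strategy (split the invariant over the components, decompose the fundamental class, compare witnesses componentwise) is the same as in Propositions \ref{disjoint} and \ref{disjointsh}, but the step on which everything rests is not justified and is false as stated: for $H$ adapted to $\coprod_i K_i$ the complex $CF^{S^1}_w(H)$ does \emph{not} decompose as a direct sum indexed by the $K_i$. The ambient manifold $M$ is closed, so there is no maximum principle confining Floer trajectories: the complex contains additional generators in the region $M-\coprod_i(K_i\cup\text{collar})$ where $H$ is near-constant (the upper orbits), and trajectories can a priori travel from orbits near one $K_i$ to orbits near another $K_j$, or to the upper orbits, through the connected complement. (Compare the $S^2$ example, where exactly such crossing trajectories contribute the $T^{1-\Delta}$ terms of the differential.) Hence the splitting cannot be obtained chain-by-chain and then ``survive completion''; the logic runs the other way. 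What makes the decomposition true in the setting of the theorem is: (i) symplectic asphericity, which via the confinement result of Ganor--Tanny \cite{gt} forces trajectories between lower orbits to stay in $K\cup\text{collar}$ and hence in a single component, and (ii) index-boundedness, which ensures the upper orbits are killed after completion (Remarks \ref{kill} and \ref{homkill}); only then does one get the componentwise splitting of the filtered negative groups $SH^{S^1,-,>L}_M$ (and $SH^{-,>L}_M$), compatibly with $\delta$, $U_L$ and $\iota_L$, which is what the capacity comparison actually uses. Calling this a ``routine adaptation of the disjoint Liouville domain case'' misses the point: for disjoint Liouville domains the completions are disjoint manifolds and the splitting is trivial, whereas here it is precisely the non-trivial content, and it is where the hypotheses of the theorem enter. (A smaller slip: a literal sum $\sum_i H_i$ of functions in $\mathcal{H}^{\text{Cont}}_{K_i}$ is not in $\mathcal{H}^{\text{Cont}}_{\coprod_i K_i}$, since on $K_j$ the large constants of the other summands destroy negativity; one must instead take Hamiltonians agreeing with admissible models near each $K_i$ and with a common constant outside.)
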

       
       \begin{theorem}[Proposition \ref{nondijoint}]\label{nondisjointintro}
       Let $(M, \omega)$ be a closed symplectic manifold which is symplectically aspherical. Let $(K_1, K_2)$ be a contact pair in $M$. Assume that $\partial K_1, \partial K_2, \partial(K_1 \cup K_2)$ and $\partial (K_1 \cap K_2)$ are index-bounded and that $K_1$ and $K_2$ satisfy descent.
               \begin{enumerate}[label=(\alph*)]
                   \item If the restriction map 
                \begin{align*}
                    r^{K_1}_{K_1 \cap K_2} : SH_M(K_1 ; \Lambda) \lr SH_M(K_1 \cap K_2; \Lambda) 
                \end{align*}
                is injective, then $c^{SH}(M, K_1) = c^{SH}(M, K_1 \cap K_2) $. \\
                \item If the restriction map 
                \begin{align*}
                    r^{K_1 \cup K_2}_{K_1} : SH_M(K_1 \cup K_2; \Lambda)  \lr SH_M(K_1 ; \Lambda)  
                \end{align*}
                is injective, then $c^{SH}(M, K_1) = c^{SH}(M, K_1 \cup K_2)$.\\
               \end{enumerate}
       \end{theorem}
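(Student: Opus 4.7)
The plan is to combine the monotonicity axiom of the relative capacity with the naturality of the unit class under restriction and the action-filtered structure from Corollary \ref{gysinaction}. Recall that $c^{SH}(M, K)$ is the spectral invariant of the unit $e_K \in SH_M(K;\Lambda)$ coming from the canonical PSS-type map from $H^*(M;\Lambda)$; equivalently, $c^{SH}(M, K) = \inf\{L > 0 : e_K \text{ vanishes in } SH^{>L}_M(K;\Lambda)\}$. The restriction maps, being induced by action-increasing continuation maps, descend to the truncated cohomologies and produce for each $L$ a map $r^{>L}$ compatible with the truncation projection $SH_M(K) \lr SH^{>L}_M(K)$. By naturality of the PSS-type unit, $r^{K_1}_{K_1 \cap K_2}(e_{K_1}) = e_{K_1 \cap K_2}$ and $r^{K_1 \cup K_2}_{K_1}(e_{K_1 \cup K_2}) = e_{K_1}$.

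For part (a), the inclusion $K_1 \cap K_2 \subset K_1$ together with monotonicity of $c^{SH}$ established in Proposition \ref{shc} immediately yields $c^{SH}(M, K_1 \cap K_2) \leq c^{SH}(M, K_1)$. For the reverse inequality, fix any $L > c^{SH}(M, K_1 \cap K_2)$, so that the image of $e_{K_1 \cap K_2}$ in $SH^{>L}_M(K_1 \cap K_2;\Lambda)$ vanishes. The commutative square
\begin{center}
\begin{tikzcd}
SH_M(K_1;\Lambda) \arrow[r] \arrow[d, "r^{K_1}_{K_1 \cap K_2}"'] & SH^{>L}_M(K_1;\Lambda) \arrow[d, "r^{>L}"] \\
SH_M(K_1 \cap K_2;\Lambda) \arrow[r] & SH^{>L}_M(K_1 \cap K_2;\Lambda)
\end{tikzcd}
\end{center}
together with $r(e_{K_1}) = e_{K_1 \cap K_2}$ then shows that the image of $e_{K_1}$ in $SH^{>L}_M(K_1;\Lambda)$ lies in the kernel of $r^{>L}$. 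The remaining goal is to deduce that this image is actually zero, which would give $c^{SH}(M, K_1) \leq L$ and hence equality.

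The main obstacle is exactly this final step: the hypothesized injectivity lives on $SH_M$, not on the truncation $SH^{>L}_M$, and injectivity need not descend to a quotient in general. The strategy is to pass to the chain level. If $\tilde e$ is a cocycle representative of $e_{K_1}$ in $CF_M(K_1;\Lambda)$ and $\tilde e'$ is an action-$\leq L$ representative of $e_{K_1 \cap K_2}$, then the action-preserving continuation map $\tilde r$ underlying $r$ satisfies $\tilde r(\tilde e) = \tilde e' + d\gamma$ for some cochain $\gamma$. A diagram chase in the map of action-filtered exact triangles, combined with the injectivity of $r$ on cohomology applied to the discrepancy between two chain-level lifts of $e_{K_1}$, forces $\tilde e$ to be cohomologous to an action-$\leq L$ cocycle, so $e_{K_1}$ vanishes in $SH^{>L}_M(K_1;\Lambda)$ as required. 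Part (b) follows the same scheme: monotonicity gives $c^{SH}(M, K_1) \leq c^{SH}(M, K_1 \cup K_2)$, and the reverse inequality is obtained by the same diagram chase applied to $r^{K_1 \cup K_2}_{K_1}$, using $r(e_{K_1 \cup K_2}) = e_{K_1}$ and the analogous commutative square with roles of source and target swapped.
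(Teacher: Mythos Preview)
Your proposal has a genuine gap at the key step, and its setup does not match the paper's conventions. In this paper $CF^{>L}_w$ is a \emph{sub}complex (the differential increases action), so the natural map runs $SH^{>L}_M(K)\to SH_M(K)$; there is no ``truncation projection'' $SH_M(K)\to SH^{>L}_M(K)$, and the capacity is defined not via a unit $e_K\in SH_M(K;\Lambda)$ but via $j_L([\omega^n|_K])\in SH^{>L}_M(K;\Lambda)$, where $[\omega^n|_K]\in H(K,\partial K;\Lambda)$ and $j_L$ is the map in the triangle \eqref{triforsh}. Your commutative square is therefore not available as written. More seriously, your chain-level resolution of the ``main obstacle'' is not a proof: from $\tilde r(\tilde e)=\tilde e'+d\gamma$ with $[\tilde e']\neq 0$ you cannot invoke injectivity of $r$ (which is a statement about classes mapping to \emph{zero}), and nothing in the sketch forces $\tilde e$ to be cohomologous to an action-$\leq L$ cocycle. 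The phrase ``diagram chase \dots applied to the discrepancy between two chain-level lifts'' does not supply the missing mechanism.

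The paper's argument instead works directly with the maps $j_L^K$ via the commutative diagram \eqref{sandwich}, which intertwines the restriction maps $s,t$ on $H(K_i,\partial K_i;\Lambda)$ with the Mayer--Vietoris restriction maps on $SH^{>L}_M(\,\cdot\,;\Lambda)$ from Proposition \ref{mv}. Lemma \ref{contactpair} first shows unconditionally that $j_L^{K_1}([\omega^n|_{K_1}])=0$ implies $j_L^{K_1\cap K_2}([\omega^n|_{K_1\cap K_2}])=0$ (this is equivalent to your use of monotonicity). For the converse it computes $j_L^{K_1\cap K_2}\bigl(t([\omega^n|_{K_1}],0)\bigr)=r^{K_1}_{K_1\cap K_2}\bigl(j_L^{K_1}([\omega^n|_{K_1}])\bigr)$, so one needs injectivity of $r^{K_1}_{K_1\cap K_2}$ on the \emph{filtered} group $SH^{>L}_M$. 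The paper simply asserts that injectivity on $SH_M(\,\cdot\,;\Lambda)$ implies injectivity on $SH^{>L}_M(\,\cdot\,;\Lambda)$; this is precisely the step you flagged, and it is taken as given in the paper rather than reduced to a chain-level chase.
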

 The precise definition of \textit{descent} will be given later in \S 2 but roughly speaking, descent is the position of two compact subsets that makes them satisfy Mayer-Vietoris property.


\subsubsection{Comparison of relative symplectic capacities}
In \cite{gh}, Gutt and Hutchings claimed a conjecture that the $k$-th Gutt-Hutchings capacity is equal to the $k$-th Ekeland-Hofer capacity defined in \cite{eh} for each $k = 1, 2, 3, \cdots$. Let us denote $k$-th Ekeland-Hofer capcity by $c_k^{EH}$. It is proved by Abbondandolo and Kang in \cite{ak} and by Irie in \cite{i} that, in some cases,  $c^{SH} = c_1^{EH}$. So, it is interesting to compare $c_1^{GH}$ and $c^{SH}$.

\begin{theorem}\label{comparisonintro}
Let $(M, \omega)$ be a closed symplectic manifold which is symplectically aspherical. Let $K \subset M$ be a compact domain with contact type and index-bounded boundary.
    \begin{enumerate}[label=(\alph*)]
        \item (Lemma \ref{comm}) $c_1^{GH}(M, K) \leq c^{SH}(M, K)$.\\
        \item (Theorem \ref{shgheq}) If the canonical contact form on $\partial K$ is dynamically convex, then $c_1^{GH}(M, K) = c^{SH}(M, K)$.\\
    \end{enumerate}
\end{theorem}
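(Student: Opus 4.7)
The plan is to frame both capacities as quantitative invariants of the canonical chain-level map $\iota: CF_w(H) \to CF^{S^1}_w(H) = \Lambda_{\geq 0}[u] \ton CF_w(H)$ sending $x \mapsto 1 \otimes x$, and then to compare them using the action-filtered Gysin triangle from Theorem \ref{similars1}(b). Both $c^{SH}(M,K)$ and $c_1^{GH}(M,K)$ are defined (as in \S 3) as the infimum of action values $L$ at which a distinguished class survives from the action-filtered subcomplex: a unit-like element in $SH_M(K;\Lambda)$ for $c^{SH}$, and its equivariant image in $SH^{S^1}_M(K;\Lambda)$ for $c_1^{GH}$.

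For part (a), I observe that $\iota$ is strict for the action filtration since the formal variable $u$ carries no action, and strictness is preserved by the directed colimit over $\mathcal{H}_K^{\text{Cont}}$, by $\Lambda_{\geq 0}$-adic completion, and by extension of scalars to $\Lambda$. The induced action-preserving map $\iota_*: SH^{>L}_M(K;\Lambda) \to SH^{S^1,>L}_M(K;\Lambda)$ sends the non-equivariant distinguished class to the equivariant one and commutes with the projections to the unfiltered cohomologies, so any $L$ witnessing $c^{SH}(M,K) \leq L$ also witnesses $c_1^{GH}(M,K) \leq L$; taking infima yields (a).

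For part (b) I need the reverse inequality under dynamical convexity. The Gysin triangle of Theorem \ref{similars1}(b) at action level $L$ has the form
\begin{align*}
SH^{>L}_M(K;\Lambda) \xrightarrow{\iota_*} SH^{S^1,>L}_M(K;\Lambda) \xrightarrow{U} SH^{S^1,>L}_M(K;\Lambda) \xrightarrow{\delta} SH^{>L}_M(K;\Lambda),
\end{align*}
with $U$ of cohomological degree $+2$ and $\delta$ the connecting map. Under our hypotheses, Theorem \ref{identity} identifies the relative $S^1$-equivariant cohomology with the absolute $SH^{S^1}(K;\Lambda)$, whose positive-action generators are indexed by Reeb orbits on $\partial K$. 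Dynamical convexity forces every such Reeb orbit to have Conley-Zehnder index at least $n+1$, where $2n = \dim M$, and via the Bourgeois-Oancea description of the equivariant differential this bound places the class detecting $c_1^{GH}(M,K)$ in a degree where $\delta$ must vanish on it; the class therefore lifts via $\iota_*$ to $SH^{>L}_M(K;\Lambda)$, giving $c^{SH}(M,K) \leq L$.

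The principal obstacle is the degree-counting step just invoked in (b): one must verify that the Conley-Zehnder lower bound from dynamical convexity, together with the $u$-degree and the Novikov grading, places the capacity-detecting class precisely in $\ker \delta$. This adapts the Liouville-domain template of Gutt-Hutchings \cite{gh}, but the relative setting requires coordinating the restriction maps, the action filtration, and the identification in Theorem \ref{identity} simultaneously, so care is needed to confirm the degree bound survives these identifications.
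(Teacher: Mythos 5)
Part (a) of your proposal is essentially the paper's proof of Lemma \ref{comm}: the ladder relating the two exact triangles via the maps $x\mapsto 1\otimes x$ and $x\mapsto x\otimes[\mathrm{pt}]$ shows that $j_L([\omega^n|_K])=0$ forces $j^{S^1}_L([\omega^n|_K]\otimes[\mathrm{pt}])=0$, hence $c_1^{GH}\leq c^{SH}$. One caution on wording: the transfer only works in the ``vanishing'' formulation (zero maps to zero); a class that \emph{survives} need not stay nonzero under the equivariantization map, so phrase the witnessing condition as the fundamental class dying in $SH^{>L}_M(K;\Lambda)$, as in the paper's $j_L$-description.

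Part (b) has genuine gaps. First, you invoke Theorem \ref{identity}, which requires $K$ to be a Liouville domain with $\pi_1(\partial K)\to\pi_1(M)$ injective; neither hypothesis appears in the statement you are proving, and the identification with the absolute $SH^{S^1}(K;\Lambda)$ is in any case unnecessary: the relative negative complex is already generated by perturbed Reeb orbits of $\partial K$ via the $K$-admissible Morse--Bott Hamiltonians (Theorem \ref{Kversion}), so the index bound can be used directly in the relative, action-filtered setting. Second, the step you yourself flag as ``the principal obstacle'' is precisely the content of the paper's Lemma \ref{-n} and is left unproved: with the paper's conventions the nonconstant generators $u^k\otimes\gamma_{\mathrm{Max}}$, $u^k\otimes\gamma_{\min}$ have degree at most $-\mathrm{CZ}(\gamma)-2k\leq -n-1$ under dynamical convexity, whence $SH^{S^1,-,>L,\ell}_M(K)=0$ for all $\ell\geq -n$ and all $L$; without this computation (and the bookkeeping that the capacity-detecting classes sit in degree $-n-1$ of the \emph{negative} filtered groups $SH^{S^1,-,>L}$, not of $SH^{S^1,>L}$) the argument does not close. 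Third, the homological algebra is misstated: exactness of the Gysin triangle gives $\operatorname{im}\iota_*=\ker U$, so lifting the detecting class through $\iota_*$ requires $U$ of it to vanish --- which it does because $U$ raises degree by $2$ into degree $-n+1$, where the group is zero --- whereas membership in $\ker\delta$ of the connecting map yields no lift. The paper instead concludes that $\iota_*\colon SH^{-,>L,-n-1}_M(K;\Lambda)\to SH^{S^1,-,>L,-n-1}_M(K;\Lambda)$ is an isomorphism because both flanking equivariant groups (degrees $-n$ and $-n+1$) vanish, and then transports the relation $\delta\iota_L(\alpha)=[\omega^n|_K]\otimes[\mathrm{pt}]$ through the commutative ladder to obtain $\delta\iota_L(\alpha')=[\omega^n|_K]$, giving $c^{SH}\leq c_1^{GH}$ and hence equality with (a).
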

In the original definition of the relative symplectic cohomology in \cite{v} and \cite{vt}, the ambient space $M$ is assumed to be a closed symplectic manifold and we followed this assumption so far. But this assumption can be extended because Sun proved in \cite{s} that the assumption for the ambient space can be generalized to being the completion of a compact symplectic manifold with contact type boundary. Therefore, for a Liouville domain with index-bounded boundary $K$, we can easily deduce that $c_k^{GH}(\widehat{K}, K) = c_k^{GH}(K)$ and $c^{SH}(\widehat{K}, K) = c^{SH}(K)$ where $c_k^{GH}(K)$ and $c^{SH}(K)$ are $k$-th Gutt-Hutchings capacity of $K$ and symplectic (co)homology capacity of $K$, respectively. This discussion proves the following corollary.
\begin{corollary}\label{generalintro}
    Let $W$ be a Liouville domain with index-bounded boundary. If the canonical contact form on $\partial W$ is dynamically convex, then $c_1^{GH}(W) = c^{SH}(W)$.  \\ \qed
\end{corollary}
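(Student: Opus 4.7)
The plan is to reduce Corollary \ref{generalintro} to Theorem \ref{comparisonintro}(b) by taking the ambient symplectic manifold to be the completion $\widehat{W}$ of $W$ and invoking the identification between relative and absolute capacities flagged in the paragraph just before the statement.

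First I would set $M \eqdef \widehat{W}$, the symplectic completion of the Liouville domain $W$. Since $W$ is Liouville, the symplectic form on $\widehat{W}$ is exact, so $\omega|_{\pi_2(\widehat{W})} = 0$; together with index-boundedness (which forces a well-defined integer grading and hence $c_1(T\widehat{W})|_{\pi_2(\widehat{W})} = 0$ in the relevant sense), this gives that $\widehat{W}$ is symplectically aspherical in the sense required by the earlier results. Moreover, $(\widehat{W}, W)$ is a pair consisting of a compact domain with contact type, index-bounded boundary inside a symplectically aspherical ``ambient'' manifold. The only non-standard point is that $\widehat{W}$ is not closed, but this is precisely what Sun's extension from \cite{s} handles, and the paragraph before the statement flags that all of the constructions of $SH_M(K)$, $SH^{S^1}_M(K)$, and the capacities $c_k^{GH}(M,K)$, $c^{SH}(M,K)$ carry over verbatim to this setting.

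Next I would invoke Theorem \ref{comparisonintro}(b) with $M = \widehat{W}$ and $K = W$: the canonical contact form on $\partial W$ is dynamically convex by hypothesis, so
\begin{align*}
    c_1^{GH}(\widehat{W}, W) = c^{SH}(\widehat{W}, W).
\end{align*}
Then I would apply the identifications $c_k^{GH}(\widehat{W}, W) = c_k^{GH}(W)$ and $c^{SH}(\widehat{W}, W) = c^{SH}(W)$ recorded in the preceding paragraph. Substituting gives $c_1^{GH}(W) = c^{SH}(W)$, which is the desired conclusion.

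The only real obstacle is the justification of these last two identifications, but they are precisely the content of Theorem \ref{identity} (together with its non-equivariant analogue from \cite{dgpz}) applied to the pair $(\widehat{W}, W)$: incompressibility of $\partial W$ in $\widehat{W}$ is automatic because $\widehat{W}$ deformation retracts onto $W$ and $\partial W \hookrightarrow W$ is incompressible by the Liouville/index-bounded hypothesis, index-boundedness is given, and symplectic asphericity was checked above. Hence $SH^{S^1}_{\widehat{W}}(W;\Lambda) \cong SH^{S^1}(W;\Lambda)$ and $SH_{\widehat{W}}(W;\Lambda) \cong SH(W;\Lambda)$ canonically and compatibly with the continuation/action-filtration structure that defines the capacities, so the relative and absolute capacities agree. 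This is the only step that really requires Sun's extension, and once it is in place the corollary is immediate.
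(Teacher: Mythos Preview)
Your approach is essentially the paper's: apply Theorem~\ref{comparisonintro}(b) with $M=\widehat{W}$, $K=W$ (using Sun's extension to allow a non-closed ambient) and then identify the relative capacities with the absolute ones.

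One correction: your claim that ``$\partial W \hookrightarrow W$ is incompressible by the Liouville/index-bounded hypothesis'' is false in general --- neither being Liouville nor having index-bounded boundary forces $\pi_1(\partial W)\to\pi_1(W)$ to be injective. Fortunately you do not need Theorem~\ref{identity} here. When the ambient is $\widehat{W}$ itself, the contact type $W$-admissible Hamiltonians on $\widehat{W}$ can (and must, for Floer compactness) be taken linear at infinity, so there is no ``top plateau'' and hence no upper orbits to kill; the relative chain complex is \emph{literally} the classical one, and the identifications $c_k^{GH}(\widehat{W},W)=c_k^{GH}(W)$, $c^{SH}(\widehat{W},W)=c^{SH}(W)$ follow by inspection rather than via Proposition~\ref{relcl}. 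This is what the paper means by ``we can easily deduce''.
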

\begin{remark}
    If we take a more careful look at the definition of $SH^{S^1}_M(K ; \Lambda)$ and the proof of Theorem \ref{comparisonintro}, we can in fact drop the index-boundedness assumption. Therefore, Corollary \ref{generalintro} still holds for general Liouville domains.\\
\end{remark}
\subsubsection{Heaviness}
As mentioned earlier, relative symplectic cohomology is related to the displaceability of a compact subset $K$. The displaceability of $K$ is also related to the notion of heaviness introduced by Entov and Polterovich in \cite{ep09}. We now consider the relationship between relative symplectic capacity and heaviness.
We begin with a brief review of heaviness. A detailed exposition can be found in \cite{ep09}. Let $(M, \omega)$ be a closed symplectic manifold and let $H : M  \to \RR$ be a Hamiltonian function on $M$. Consider the PSS isomorphism of $H$, introduced in \cite{pss},
\begin{align*}
    PSS^H : QH(M ; \Lambda) \lr HF(H ; \Lambda)
\end{align*}
where $QH(M ; \Lambda)$ is the quantum cohomology of $M$ and $HF(H ; \Lambda)$ is the Floer cohomology of $H$. For each nonzero $a \in QH(M ; \Lambda)$, the \textbf{spectral invariant} of $a$ is defined by
\begin{align*}
    c(a ; H) = \sup \left\{ L \in \RR \mid PSS^H(a) \in \text{Im} i_L \right\}
\end{align*}
where $i_L : HF^{>L}(H) \to HF(H)$ is a natural map induced by the inclusion. Here, $HF^{>L}(H)$ stands for the action filtration of $HF(H)$ generated by Hamiltonian orbits of $H$ of action greater than $L$. Also, the \textbf{homogenized spectral invariant} of a nonzero idempotent $a \in QH(M ; \Lambda)$ can be defined by
\begin{align*}
    \mu(a; H) = \lim_{\ell \to \infty} \frac{c(a, \ell H)}{\ell}.
\end{align*}
Using this homogenized spectral invariant, we can define the heaviness of a compact subset.
\begin{definition}[Entov and Polterovich \cite{ep09}]
    Let $(M, \omega)$ be a closed symplectic manifold and let $a \in QH(M; \Lambda)$ be a nonzero element.
    \begin{enumerate}[label=(\alph*)]
        \item A compact set $K \subset M$ is called $a$-\textbf{heavy} if $\mu(a ; H) \leq \displaystyle\max_{K} H$ for all $H \in C^{\infty}(M)$. If $a = 1$ is the unit, then we say $K$ is heavy.\\
        \item A compact set $K \subset M$ is called $a$-\textbf{superheavy} if $\mu(a ; H) \geq \displaystyle\min_{K} H$ for all $H \in C^{\infty}(M)$. If $a = 1$ is the unit, we say that it is \textbf{superheavy}.\\
    \end{enumerate} 
\end{definition}
 We say that $K$ is \textbf{stably displaceable} if $K \times Z_{S^1}$ is displaceable from itself in $M \times T^*S^1$ where $Z_{S^1}$ is the zero section of $T^*S^1$.
\begin{theorem}[Entov and Polterovich \cite{ep09}]\label{enpo}
    Let $(M, \omega)$ be a closed symplectic manifold and let $K \subset M$ be a compact subset.
    \begin{enumerate}[label=(\alph*)]
    \item If $K$ is superheavy, then it is heavy.\\
        \item If $K$ is heavy, then it is not stably displaceable.\\
        \item If $K$ is superheavy, then it intersects with every heavy set in $M$.\\
    \end{enumerate} 
\end{theorem}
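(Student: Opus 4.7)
The plan is to reduce each part to a formal manipulation of the standard properties of the homogenized spectral invariant $\mu(a; \cdot)$ for a nonzero idempotent $a \in QH(M; \Lambda)$: monotonicity in $H$, the shift identity $\mu(a; H+c) = \mu(a; H) + c$, $C^0$-continuity, normalization $\mu(a; 0) = 0$, and the subadditivity $\mu(a; H_1 + H_2) \leq \mu(a; H_1) + \mu(a; H_2)$. The last of these follows from the triangle inequality for spectral invariants, $c(ab; H_1 \# H_2) \leq c(a; H_1) + c(b; H_2)$, combined with $a \cdot a = a$ for idempotents and then passing to the asymptotic limit. These are the standard Entov--Polterovich ingredients \cite{ep09} and together say that $\mu(a; \cdot)$ is a partial symplectic quasi-state.

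For (a), the plan is to apply the $a$-superheaviness inequality to $-H$ in place of $H$ to get $\mu(a; -H) \geq -\max_K H$, and then combine with subadditivity applied to the pair $H, -H$, which yields $\mu(a; H) + \mu(a; -H) \leq 0$. Together these give $\mu(a; H) \leq -\mu(a; -H) \leq \max_K H$, which is $a$-heaviness. For (c), the plan is a direct separation argument: assume for contradiction that a superheavy $X$ and a heavy $Y$ (both with respect to $a$) are disjoint; choose $H \in C^{\infty}(M)$ with $H \equiv 1$ on a neighborhood of $X$ and $H \equiv 0$ on a neighborhood of $Y$, and observe that superheaviness forces $\mu(a; H) \geq 1$ while heaviness forces $\mu(a; H) \leq 0$, which is absurd.

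The hard part is (b), which I would prove contrapositively: assume $K$ is stably displaceable and show it is not heavy. Set $\widetilde{M} = M \times T^*S^1$ with the product symplectic form and $\widetilde{K} = K \times Z_{S^1}$, and choose a compactly supported $F$ on $\widetilde{M}$ whose time-$1$ flow displaces $\widetilde{K}$. The two key inputs are (i) the energy--capacity inequality for spectral invariants, producing a uniform a priori bound on $\mu(a; \widetilde{H})$ for $\widetilde{H}$ supported in a fixed neighborhood of $\widetilde{K}$, in terms of the Hofer norm of $F$, and (ii) the stabilization compatibility that identifies $\mu(a; \widetilde{H})$ on $\widetilde{M}$ with $\mu(a; H)$ on $M$ whenever $\widetilde{H}$ is a suitable pullback and cutoff of $H : M \to \RR$. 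Then, taking a sequence $H_n$ with $\max_K H_n \to -\infty$ but uniformly bounded support near $K$, heaviness would force $\mu(a; H_n) \to -\infty$, contradicting the uniform upper bound from (i).

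The main obstacle is assembling the stabilization argument in (b) cleanly; while the energy--capacity inequality is classical, the compatibility of the homogenized spectral invariant with the projection $\widetilde{M} \to M$ requires some care in choosing cutoffs and almost complex structures on the cotangent factor. Parts (a) and (c) are entirely formal once the above properties of $\mu(a; \cdot)$ are granted, so one should really view this statement as a citation of \cite{ep09} rather than an independent result.
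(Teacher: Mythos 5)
You should first note that the paper does not prove this statement at all: it is quoted from \cite{ep09}, so the only benchmark is the standard Entov--Polterovich argument, which is indeed the route you outline. Part (c) is correct as written in the paper's conventions. But there is a concrete inconsistency in (a): the paper's definitions of heavy and superheavy are \emph{mirrored} relative to \cite{ep09} (here heavy means $\mu(a;H)\le \max_K H$ and superheavy means $\mu(a;H)\ge \min_K H$, because of the paper's action and spectral-invariant conventions), while the toolbox you grant yourself --- subadditivity $\mu(a;H_1+H_2)\le\mu(a;H_1)+\mu(a;H_2)$ coming from the triangle inequality $c(ab;H_1\#H_2)\le c(a;H_1)+c(b;H_2)$ --- is stated in the EP convention. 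These two choices collide at your pivotal step: subadditivity together with $\mu(a;0)=0$ gives $\mu(a;H)+\mu(a;-H)\ge 0$, not $\le 0$ as you assert. The inequality you actually need in the paper's conventions, $\mu(a;H)+\mu(a;-H)\le 0$, is true, but it follows from the mirrored (superadditive) form of the triangle inequality; as written, the justification of the key inequality is false, even though the final chain $\mu(a;H)\le-\mu(a;-H)\le\max_K H$ is the right one. You must fix one convention and translate every property consistently before running the argument.

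The same direction confusion reappears in (b): with the paper's convention, heaviness plus $\max_K H_n\to-\infty$ forces $\mu(a;H_n)\to-\infty$, which contradicts a uniform \emph{lower} (or two-sided) bound on $\mu(a;\cdot)$ for Hamiltonians supported in a fixed neighborhood of the stably displaced set, not the ``uniform upper bound'' you invoke. More substantively, the two inputs you name --- the energy--capacity estimate giving boundedness (in fact vanishing) of the homogenized invariant on Hamiltonians supported in a displaceable neighborhood, and the compatibility of $\mu$ with the stabilization $M\times T^*S^1$ (choice of idempotent on the product, Floer theory on the open cotangent factor, cutoffs) --- are essentially the entire content of (b) in \cite{ep09} and are only asserted, not established. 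So your proposal is best read as a sketch of the Entov--Polterovich proof with the signs garbled by the paper's flipped conventions; since the paper itself treats the statement purely as a citation, the honest options are either to cite \cite{ep09} as the paper does, or to redo their argument carefully after translating monotonicity, normalization, and the triangle inequality into the conventions used here.
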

In \cite{msv}, it is proved by Mak, Sun and Varolgunes that relative symplectic cohomology can give us a criterion for heaviness.
\begin{theorem}[Mak, Sun and Varolgunes \cite{msv}]\label{strongheavy}
    Let $(M, \omega)$ be a closed symplectic manifold and let $K \subset M$ be a compact subset. Then $SH_M(K ; \Lambda) \neq 0$ if and only if $K$ is heavy.
\end{theorem}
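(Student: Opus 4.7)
The plan is to relate both sides of the equivalence to a single chain-level object: the image of the unit $1 \in QH(M;\Lambda)$ under a PSS-type closed-open map $\mathcal{P}_K : QH(M;\Lambda) \to SH_M(K;\Lambda)$. Since $SH_M(K;\Lambda)$ is a unital $\Lambda$-algebra whose unit is $\mathcal{P}_K(1)$, non-vanishing of $SH_M(K;\Lambda)$ is equivalent to $\mathcal{P}_K(1) \neq 0$. The problem then reduces to showing $\mathcal{P}_K(1) \neq 0$ if and only if $K$ is heavy. I would construct $\mathcal{P}_K$ at the chain level from moduli spaces of spiked disks: half-cylinders into $M$ converging at $+\infty$ to a Hamiltonian orbit of $H \in \mathcal{H}_K^{\text{Cont}}$ and broken at an interior marked point to a pseudoholomorphic sphere representing the quantum class. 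Standard energy estimates control the $T$-adic valuation of $PSS^H(1)$ in terms of the spectral invariant $c(1;H)$, and the maps $PSS^H$ assemble compatibly along the acceleration diagram defining $SH_M(K;\Lambda)$.

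For the direction $SH_M(K;\Lambda) \neq 0 \Rightarrow K$ heavy, I would argue by contrapositive. Suppose $K$ is not heavy, so there exists $H \in C^\infty(M)$ with $\mu(1;H) > \max_K H$; by adding a constant, assume $\max_K H < 0 < \mu(1;H)$. I would build an acceleration family $H_\ell \in \mathcal{H}_K^{\text{Cont}}$ dominating $\ell H$ on $K$, so that $\max_K H_\ell \leq \ell \max_K H \to -\infty$. The element $PSS^{H_\ell}(1)$ is supported on orbits of action at most $\max_K H_\ell$ on the $K$-side, while homogenization forces its comparison class from the quantum side to grow positively at rate $\mu(1;H)$. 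Taking the direct limit and the Novikov completion annihilates classes whose chain representatives are swept to arbitrarily negative action, giving $\mathcal{P}_K(1) = 0$.

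For the converse, assume $K$ is heavy. For every $H \in \mathcal{H}_K^{\text{Cont}}$, the inequality $\mu(1;H) \leq \max_K H$ combined with homogenization gives a lower bound on the action valuation of $PSS^H(1)$ that stays bounded as we traverse the acceleration diagram. A hypothetical cochain bounding $\mathcal{P}_K(1)$ in the completed direct limit would require chain representatives with action tending to $-\infty$, which this uniform lower bound rules out. Hence $\mathcal{P}_K(1) \neq 0$, and therefore $SH_M(K;\Lambda) \neq 0$.

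The main obstacle I anticipate is the precise bookkeeping between three operations: the action filtration on each $CF_w(H)$, the direct limit over $\mathcal{H}_K^{\text{Cont}}$, and the Novikov completion $\widehat{\varinjlim}$. Non-vanishing in the completed colimit is strictly stronger than non-vanishing at each finite stage, and one needs a barcode-type spectral-gap estimate showing that the persistent birth time of $\mathcal{P}_K(1)$ stays uniformly bounded below along the acceleration. Heaviness is precisely the quantitative input that supplies this uniform bound, while its failure produces the explicit sequence of Hamiltonians witnessing the escape to $-\infty$; turning both of these intuitions into rigorous estimates compatible with the completion is the technical heart of the argument.
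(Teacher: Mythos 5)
First, a point of context: the paper does not prove Theorem \ref{strongheavy} at all --- it is imported as a black box from Mak--Sun--Varolgunes \cite{msv} --- so there is no internal proof to compare against; your outline does, however, follow the same general strategy as the cited proof (reduce nonvanishing to nonvanishing of the unit, i.e.\ the image of $1\in QH(M;\Lambda)$ under the PSS/restriction-type map, and control that image by spectral invariants along acceleration data, with heaviness as the quantitative input). As a proof, though, the sketch has genuine gaps. The most serious one is in the direction ``$K$ heavy $\Rightarrow SH_M(K;\Lambda)\neq 0$'': you claim that $\mu(1;H)\leq \max_K H$ ``gives a lower bound on the action valuation of $PSS^H(1)$ that stays bounded as we traverse the acceleration diagram.'' Heaviness only constrains the homogenized invariants $\mu(1;H)=\lim_\ell c(1;\ell H)/\ell$ of each \emph{fixed} Hamiltonian, i.e.\ it gives sublinear growth of $c(1;\ell H)$ along rays $\ell\mapsto \ell H$. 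The cofinal family $\{H_n\}\subset\mathcal{H}_K^{\mathrm{Cont}}$ is not a ray --- its ``shape'' changes as it blows up on $M\setminus K$ --- and sublinearity in $\ell$ for each fixed shape does not yield a uniform bound on $c(1;H_n)$ along the acceleration. Bridging exactly this gap (converting torsion/vanishing of the unit in the completed colimit into a statement at a finite stage with definite constants, and then playing it against homogenization) is the technical core of \cite{msv}; your ``barcode-type spectral-gap estimate'' names the difficulty but does not supply the argument.

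Two further points need repair. The mechanism you invoke for vanishing --- ``completion annihilates classes whose chain representatives are swept to arbitrarily negative action'' --- is not how classes die here: over $\Lambda_{\geq 0}$ the completed telescope kills a class precisely when it becomes arbitrarily $T$-divisible up to boundaries (equivalently, $SH_M(K;\Lambda)=0$ iff the unit is $T^\lambda$-torsion in $SH_M(K)$ for some finite $\lambda$, Varolgunes's criterion), and relating growth of $c(1;H_n)$ to this $T$-adic statement is itself a nontrivial step, not a formal consequence of taking limits; note also that with this paper's conventions continuation maps \emph{increase} action, so ``escape to $-\infty$'' is not even the relevant direction. Finally, in the ``not heavy'' direction you must actually compare $\ell H$ (normalized so $\max_K H<0<\mu(1;H)$) with members of the cofinal family via monotonicity of spectral invariants, which requires $H_n\geq \ell(n)H$ everywhere, including on the collar just outside $K$ where $H$ need not be negative while $H_n$ is still $C^2$-small; this is fixable but is not addressed. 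In addition, the identification of the unit of $SH_M(K;\Lambda)$ with $\mathcal{P}_K(1)$ presupposes the ring structure on relative symplectic cohomology, which should be cited as an input rather than taken for granted.
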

Following theorem implies that relative symplectic capacity can say something about heaviness. 
\begin{theorem}[Proposition \ref{finitesh}]\label{finitezero}
   Let $(M, \omega)$ be a closed symplectic manifold which is symplectically aspherical. Let $K \subset M$ be a compact domain with contact type and index-bounded boundary. If $SH_M(K ; \Lambda) = 0$, then $c^{SH}(M, K) < \infty$.
\end{theorem}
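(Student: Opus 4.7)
The plan is to show that under the hypothesis $SH_M(K;\Lambda) = 0$ the class whose vanishing defines $c^{SH}(M,K)$ is already null at a finite action level. By Proposition \ref{shc}, $c^{SH}(M,K)$ is a spectral invariant attached to the Floer-theoretic image of the unit $1 \in QH(M;\Lambda)$ in $SH_M(K;\Lambda)$, formulated in terms of the action-filtered pieces $SH_M^{>L}(K;\Lambda)$ from Theorem \ref{similars1}(b); the capacity is finite precisely when this image becomes trivial at a bounded action level. My task is therefore to certify such a finite action window under the global vanishing assumption.

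First, I would pick a Floer representative of the unit at bounded action: starting from a $C^2$-small Morse Hamiltonian on $K$, extended to an admissible element of $\mathcal{H}_K^{\text{Cont}}$, the PSS image of $1$ is carried by constant orbits and lives at some action level $L_0 < \infty$. The hypothesis $SH_M(K;\Lambda) = 0$ says that this cocycle is a coboundary in the Novikov-completed colimit $\widehat{\varinjlim}_{H \in \mathcal{H}_K^{\text{Cont}}} CF_w(H) \ton \Lambda$ that computes $SH_M(K;\Lambda)$. Using that any element of such a completed colimit is witnessed at a single cofinal stage modulo an arbitrarily small Novikov weight, I would extract an admissible Hamiltonian $H$ and a bounding cochain $\beta$ of bounded action whose differential equals a representative of the unit modulo $T^N$ for each $N$. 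Composing with the continuation map from the $L_0$-level data and tracking how actions evolve along this continuation, this exhibits the unit as trivial inside a fixed finite action window, so that $c^{SH}(M,K) \leq L < \infty$.

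The main obstacle I anticipate is the careful interaction between the Novikov completion and the action filtration: a priori the bounding cochain extracted from the completion can carry arbitrarily large Novikov weights, and one must ensure that this does not force arbitrarily large action support. The resolution is that in Varolgunes's framework, multiplication by the Novikov variable $T^\lambda$ does not alter the action support of a chain, so the Novikov filtration and the action filtration are independently controlled. This decoupling lets me extract a finite-action bounding cochain modulo each $T^N$, which is exactly what is needed to turn the algebraic vanishing $SH_M(K;\Lambda) = 0$ into a quantitative bound $c^{SH}(M,K) < \infty$.
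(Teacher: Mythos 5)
Your proposal has two problems, one of identification and one of substance. First, $c^{SH}(M,K)$ is not defined through the PSS image of the unit $1 \in QH(M;\Lambda)$: it is defined through the relative fundamental class $[\omega^n|_K] \in H(K,\partial K;\Lambda)$ and the map $j_L : H(K,\partial K;\Lambda) \to SH^{>L}_M(K;\Lambda)$ of Remark \ref{nons1} (Proposition \ref{shc} only records conformality and monotonicity, not the definition). This matters: the unit would correspond to a different, absolute cohomology class, so even a completed version of your argument would be bounding the wrong quantity.

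Second, and more seriously, the heart of the statement is exactly the step you pass over. Finiteness of $c^{SH}(M,K)$ means $j_L([\omega^n|_K])=0$ in the filtered group $SH^{>L}_M(K;\Lambda)$ for a single finite $L$, so from exactness of the representing cocycle in the completed colimit $\widehat{\varinjlim}\, CF_w(H)\otimes_{\Lambda_{\geq 0}}\Lambda$ you must produce a primitive whose action support is bounded below by a fixed $L$. An element of the completed complex is an infinite sum of orbits whose Novikov valuations tend to infinity but whose actions may tend to $-\infty$; your observation that multiplication by $T^{\lambda}$ does not change the action support of a chain is true but irrelevant, since the primitive is not obtained from a finite-action chain by rescaling by powers of $T$, so the claimed ``decoupling'' of the Novikov and action filtrations does not control it. Similarly, extracting bounding cochains modulo $T^N$ for each $N$ yields primitives on varying Hamiltonians with no uniform action bound, and you give no mechanism for splicing them into a class of $SH^{>L}_M(K;\Lambda)$ for one fixed $L$. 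The paper circumvents precisely this difficulty: from the exact triangle in Remark \ref{nons1}, the hypothesis $SH_M(K;\Lambda)=0$ (the case $L=-\infty$) makes the connecting map $\delta$ an isomorphism onto $H(K,\partial K;\Lambda)$; one chooses $\alpha \in SH^{-}_M(K;\Lambda)$ with $\delta(\alpha)=[\omega^n|_K]$, uses the direct limit property $\varinjlim_{L\to-\infty} SH^{-,>L}_M(K) = SH^{-}_M(K)$ from Proposition \ref{building}(a) (in its non-equivariant form) to realize $\alpha$ at a finite action level $L$, and concludes $c^{SH}(M,K) \leq -L$ via the alternative description of the capacity in Remark \ref{altsh}. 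In other words, the finite-action witness is located in the negative quotient groups, where the limit statement is already available, rather than by attempting to control a primitive in the full completed complex, which is the gap in your argument.
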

Combining results above, we can easily prove the following corollary.
\begin{corollary}
    Let $(M, \omega)$ be a closed symplectic manifold which is symplectically aspherical. Let $K \subset M$ be acompact domain with contact type and index-bounded boundary.
    \begin{enumerate}[label=(\alph*)]
\item If $K$ is displaceable, then $c^{SH}(M, K) < \infty$.  Equivalently, if $c^{SH}(M, K) = \infty$, then $K$ is not displaceable.\\
\item If $K$ is not heavy, then $c^{SH}(M, K) < \infty$. Equivalently, if $c^{SH}(M, K) = \infty$, then $K$ is heavy.\\
\item If $K$ is stably displaceable, then $c^{SH}(M, K) < \infty$. Equivalently, if $c^{SH}(M, K) = \infty$, then $K$ is not stably displaceable.\\ \qed

    \end{enumerate}
\end{corollary}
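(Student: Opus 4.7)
The plan is that this corollary follows by chaining the previously stated results; no new Floer-theoretic input is required, and the only thing to verify is that the hypotheses line up. I will handle each of (a), (b), (c) separately and then note how the three are related.

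For (a), I would start from Theorem \ref{dis} of Varolgunes, which asserts that displaceability of a compact subset $K \subset M$ forces the vanishing $SH_M(K;\Lambda)=0$. Since the hypotheses of the present corollary include that $M$ is symplectically aspherical and $K$ is a compact domain with contact type, index-bounded boundary, Theorem \ref{finitezero} (Proposition \ref{finitesh}) applies and immediately converts this vanishing into $c^{SH}(M,K)<\infty$. This is exactly the contrapositive form of what is to be proved.

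For (b), I would invoke Theorem \ref{strongheavy} of Mak, Sun, and Varolgunes in its contrapositive direction: if $K$ is not heavy, then $SH_M(K;\Lambda)=0$. Once again Theorem \ref{finitezero} yields $c^{SH}(M,K)<\infty$. For (c), I would reduce to (b) by means of Theorem \ref{enpo}(b) of Entov and Polterovich: heaviness obstructs stable displaceability, so by contraposition a stably displaceable $K$ cannot be heavy; applying (b) then finishes the argument.

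The main (and essentially only) obstacle is bookkeeping of hypotheses. I need to check that the conditions under which Theorem \ref{dis}, Theorem \ref{strongheavy}, and Theorem \ref{enpo}(b) are proved (closed symplectic manifold, compact subset) are strictly weaker than those assumed in the corollary (symplectically aspherical, compact domain with contact type and index-bounded boundary), so they may be freely applied; and conversely, that the stronger hypotheses of the corollary are precisely what Theorem \ref{finitezero} requires, so the downstream implication goes through. No analytic step beyond this logical chain is needed.
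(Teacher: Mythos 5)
Your proof is correct and is exactly the argument the paper intends: (a) follows from Theorem \ref{dis} plus Proposition \ref{finitesh}, (b) from Theorem \ref{strongheavy} plus Proposition \ref{finitesh}, and (c) from (b) via Theorem \ref{enpo}(b), with the hypotheses of the corollary being strictly stronger than those of the cited vanishing results and matching those of Proposition \ref{finitesh}. No difference from the paper's approach worth noting.
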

  
\subsection{Organization of the paper} In section 2, we will briefly review $S^1$-equivariant symplectic cohomology and relative symplectic cohomology. In section 3, the definition of $S^1$-equivariant symplectic cohomology will be introduced and some basic properties of it will be provided. In section 4, the background for the relative symplectic capacity will be built. And at the end of section 4, we will compare the first relative Gutt-Hutchings capacity and the relative symplectic (co)homology capacity.\\

\noindent\textbf{Acknowledgements.} The author thanks Ely Kerman for motivating this project and for his helpful discussions and valuable comments. \\

\section{Preliminaries}
\subsection{Conventions} We put our conventions together here for Hamiltonian Floer theory. A more detailed exposition will be provided later. Let $(M, \omega)$ be a symplectic manifold and $J$ be a generic almost complex structure that is compatible with $\omega$. For any Hamiltonian function $H : S^1 \times M \to \RR$, we denote the Hamiltonian vector field of $H$ by $X_H$.
\begin{itemize}
    \item (Riemannian metric) $g(v, w) = \omega (v, Jw)$.\\
    \item (Hamiltonian equation) $\iota_{X_H} \omega = d H$.\\
    \item (Action functional) For a contractible Hamiltonian orbit $x$, let $\Tilde{x}$ be a disk capping of $x$. The action functional $\mathcal{A}_H(x,\Tilde{x})$ of $(x, \Tilde{x})$ is defined by
    \begin{align*}
        \mathcal{A}_H(x,\Tilde{x}) = \int_{D^2} \Tilde{x}^* \omega + \int_{S^1} H_t(x(t)) dt.\\
    \end{align*}
    If $\omega|_{\pi_2(M)}=0$, then $ \mathcal{A}_H(x,\Tilde{x}) =  \mathcal{A}_H(x)$ does not depend on the choice of a disk capping $\Tilde{x}$ of $x$.
    \item (Floer equation) $\displaystyle\frac{\partial u}{\partial s}(s,t) + J(u(s,t))\left(\frac{\partial u}{\partial t}(s,t) - X_H(u(s,t))\right) = 0$ for a smooth map $u : \RR \times S^1 \to M$.\\
    \item (Grading) Let $x$ be a contractible Hamiltonian orbit and $\Tilde{x}$ be its disk capping. The symplectic trivialization along $\Tilde{x}$ determines a unique (up to homotopy) trivialization along $x$. This can be used to define the Conley-Zehnder index $\text{CZ}(x, \Tilde{x})$ of $(x, \Tilde{x})$. Grading of $(x, \Tilde{x})$ is given by $\text{CZ}(x, \Tilde{x}).$ If $c_1(TM)|_{\pi_2(M)}=0$, then $\text{CZ}(x, \Tilde{x}) = \text{CZ}(x)$ does not depend on the choice of a symplectic trivialization and is $\ZZ$-valued.\\
    \end{itemize}
\begin{remark}\label{convention} Here are some consequences that we want to emphasize due to the conventions above.
    \begin{enumerate}[label=(\alph*)]
        \item Let $x$ and $y$ be contractible orbits of $H$ and let $\Tilde{x}$ and $\Tilde{y}$ be disk cappings of $x$ and $y$, respectively. If there exists a Floer trajectory $u$ such that $\displaystyle\lim_{s \to -\infty} u(s,t) = x(t)$, $\displaystyle\lim_{s \to \infty} u(s,t) = y(t)$ and $[\Tilde{x}\,\#\,u] = - [\Tilde{y}]$ where $\#$ denotes the connected sum and the bracket denotes the homotopy class, then $\mathcal{A}_H(x,\Tilde{x}) \leq \mathcal{A}_H(y,\Tilde{y})$ and $\text{CZ}(x, \Tilde{x}) < \text{CZ}(y, \Tilde{y})$.\\
       \item If $H$ is $C^2$-small, then the Floer equation becomes
        \begin{align*}
            \frac{du}{ds} = J(u)X_H(u) = \nabla_g H(u)
        \end{align*}
        where $\nabla_g H$ is the gradient vector field of $H$ with respect to the Riemannian metric $g$. Therefore, the Floer theory in this case becomes the Morse theory of $-H$. Moreover, for critical point $x$ of $H$, choose $\Tilde{x}$ to be the constant capping of $x$. Then we have
        \begin{align*}
            \text{CZ}(x, \Tilde{x}) = n- \text{ind}(x, -H)
        \end{align*}
        where $\text{ind}(x, -H)$ is a Morse index of a critical point $x$ of $-H$.\\
    \end{enumerate}
\end{remark}
\subsection{$S^1$-equivariant symplectic cohomology}
In this subsection, we briefly recall the definition of $S^1$-equivariant symplectic cohomology and its properties. Detailed exposition can be found in \cite{bo}, \cite{gh} and \cite{vi}. Let $(M, \omega)$ be a compact \textbf{symplectic manifold with contact type boundary}, that is, there exists a Liouville vector field $X$ defined on a neighborhood of $\partial M$ satisfying $\mathcal{L}_X \omega = \omega$ and $X$ is transvere to $\partial M$. Let $\lambda = \iota_X \omega$. Then $\lambda_M := \lambda|_{\partial M}$ is a contact form on $\partial M$. If $\lambda$ is defined on $M$ and $d \lambda = \omega$, then we call $(M, \omega, \lambda)$ a \textbf{Liouville domain}. The symplectic completion $\widehat{M}$ of $M$ is the symplectic manifold
\begin{align*}
    \widehat{M} = (M \amalg (\partial M \times [0, \infty]))/\sim
\end{align*}
with its symplectic form
\begin{align*}
    \widehat{\omega} = \begin{cases}
        \omega &\text{on}\,\, M\\
        d(e^{\rho} \lambda_M) & \text{on} \,\,\partial M \times [0, \infty).
        \end{cases}
\end{align*}
where $\rho$ is a coordinate on $[0, \infty)$. The equivalence relation $\sim$ is given by the diffeomorphism 
\begin{align*}
    \partial M \times [0, \infty) \to U, \,\,(p, \rho) \mapsto \phi^X_{\rho}(p)
\end{align*}
where $\phi^X_{\rho}$ is the flow of the Liouville vector field $X$ and $U$ is a neighborhood of $\partial M$ in $M$.
\begin{definition}
    Let $(M, \omega)$ be a compact symplectic manifold with contact type boundary. An \textbf{admissible Hamiltonian function} is a smooth function $H : S^1 \times \widehat{M} \to \RR$ satisfying the following conditions:
    \begin{enumerate}[label=(\alph*)]
        \item $H$ is negative and $C^2$-small on $S^1 \times M$.\\
        \item There exists $\rho_0 \geq 0$ such that $H(t, p, \rho) = \beta e^{\rho} +\beta'$ on $S^1 \times \partial M \times [ \rho_0, \infty)$ where $\beta \notin \text{Spec}(\partial M, \lambda_{ M})$. Here, $\text{Spec}(\partial M, \lambda_{ M})$ is the set of periods of Reeb orbits on $(\partial M, \lambda_{ M})$.\\
        \item $H(t, p, \rho)$ is $C^2$-close to $h(e^{\rho})$ on $S^1 \times \partial M \times [0, \rho_0]$ for some strictly convex and increasing function $h$. 
    \end{enumerate}
    We denote the set of all admissible Hamiltonian functions by $\mathcal{H}$.\\
\end{definition}
Note that the Hamiltonian vector field of $H \in \mathcal{H}$ on the cylinder is given by 
\begin{align*}
    X_H(p, \rho) = -h'(e^{\rho}) R_{\lambda_M}(p)
\end{align*}
where $R_{\lambda_M}$ is the Reeb vector field of $\lambda_M$. Hence, a 1-periodic Hamiltonian orbit of $H$ corresponds to a Reeb orbit of $(\partial M, \lambda_M)$ of period $h'(e^{\rho})$ and traversed in the opposite direction of the Reeb orbit.
Now we define $S^1$-equivariant Floer complex of $H \in \mathcal{H}$ (over $\QQ$). For simplicity, we assume that $(M, \omega)$ is symplectically atoroidal\footnote{A symplectic manifold $(M, \omega)$ is called \textbf{symplectically atoroidal} if $\int_{T^2} f^*\omega = 0$ for any smooth map $f : T^2 \to M$.}. Let $CF(H)$ be a usual Floer complex of $H$, that is,
\begin{align*}
    CF(H) = \bigoplus_{x \in \mathcal{P}(H)} \QQ \langle x \rangle
\end{align*}
where $\mathcal{P}(H)$ is the set of contractible 1-periodic orbits of $H$. Define
\begin{align*}
    CF^{S^1}(H) = \QQ[u] \otimes_{\QQ} CF(H)
\end{align*}
where $u$ is a formal variable of degree 2. The differential has the form
\begin{align*}
    d^{S^1} (u^k \otimes x) = \sum_{i=0}^k u^{k-i} \otimes \phi_i(x).
\end{align*}
In particular, $\phi_0$ is the Floer differential of $CF(H)$. For the precise definition of $\phi_i$, see \cite{bo}. The grading on $CF^{S^1}(H)$ is defined by 
\begin{align*}
    | u^k \otimes x | = \text{CZ}_{\tau}(x) - 2k
\end{align*}
where $\text{CZ}_{\tau}(x)$ is the Conley-Zehnder index of $x$ computed using a symplectic trivialization $\tau$. For $H_1, H_2 \in \mathcal{H}$ with $H_1 \leq H_2$, there is a continuation map $CF^{S^1}(H_1) \lr CF^{S^1}(H_2)$ induced by a monotone homotopy $H_s$ connecting $H_1$ and $H_2$. Note that continuation maps increase the action and preserve the grading.\\
\begin{definition}
    Let $(M, \omega)$ be a compact symplectic manifold with contact type boundary. The \textbf{$S^1$-equivariant symplectic cohomology} $SH^{S^1}(M)$ of $M$ is defined by 
    \begin{align*}
        SH^{S^1}(M) &= H\left( \varinjlim_{H \in \mathcal{H}} CF^{S^1}(H)\right)\\& = H\left( \varinjlim_{n \to \infty} CF^{S^1}(H_n)\right) 
    \end{align*}
    where $H_n \in \mathcal{H}$ and $H_1 \leq H_2 \leq H_3 \leq \cdots$ and 
    \begin{align*}
        \lim_{n \to \infty} H_n(p) = \begin{cases}
            0 \,\,&\text{if}\,\, p \in M\\
            \infty \,\,&\text{if}\,\, p \notin M.
        \end{cases}\\
    \end{align*}
\end{definition}
We can understand the symplectic cohomology better if we utilize the techniques due to Bourgeois and Oancea \cite{bo09}.
\begin{definition}[Bourgeois and Oancea \cite{bo09}]\label{mb}
    Let $(M, \omega)$ be a compact symplectic manifold with contact type boundary. An \textbf{admissible Morse-Bott Hamiltonian function} is an autonomous function $H : \widehat{M} \to \RR$ satisfying the following conditions.
    \begin{enumerate}[label=(\alph*)]
        \item $H$ is negative and $C^2$-small on $M$\\
        \item There exists $\rho_0 > 0$ such that $H(p, \rho) = \beta e^{\rho} + \beta'$ on $\partial M \times [\rho_0, \infty)$ where $\beta \notin \text{Spec}(\partial M, \lambda_M)$.\\
        \item $H(p, \rho)$ is $C^2$-close to $h(e^{\rho})$ on $\partial M \times [0, \rho_0]$ for some convex and increasing function satisfying $h^{''} - h^' >0$.
    \end{enumerate}
    We denote the set of all admissible Morse-Bott Hamiltonian functions by $\mathcal{H}^{MB}$.\\
\end{definition}
\begin{theorem}[Bourgeois and Oancea \cite{bo09}, Cieliebak, Floer, Hofer and Wysocki \cite{cfhw}]\label{reeb} 
    An admissible Morse-Bott Hamiltonian function $H \in \mathcal{H}^{MB}$ can be perturbed to an adimissible Hamiltonian function $H' \in \mathcal{H}$ whose 1-periodic orbits satisfy the following.
    \begin{enumerate}[label=(\alph*)]
        \item The constant periodic orbits of $H'$ are the critical points of $H$.\\
        \item For each Reeb orbit $\gamma$ of $(\partial M, \lambda_M)$, there are two nondegenerate Hamiltonian orbits $\gamma_{\text{Max}}$ and $\gamma_{\text{min}}$ of $H'$ such that for a given symplectic trivialization $\tau$ along $\gamma$,
        \begin{align*}
            - \text{CZ}_{\tau}(\gamma_{\text{Max}}) = \text{CZ}_{\tau}(\gamma) \,\, \text{and}\,\, - \text{CZ}_{\tau}(\gamma_{\text{min}}) = \text{CZ}_{\tau}(\gamma) +1 
        \end{align*}
        where $\text{CZ}_{\tau}$ stands for the Conley-Zehnder index calculated under the trivialization $\tau$.\\
    \end{enumerate}
\end{theorem}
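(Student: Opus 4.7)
The plan is to follow the standard Morse--Bott perturbation scheme of Bourgeois--Oancea \cite{bo09} and Cieliebak--Floer--Hofer--Wysocki \cite{cfhw}. I first analyze the structure of 1-periodic orbits of the unperturbed autonomous Hamiltonian $H$, then construct a small time-dependent perturbation that breaks the $S^1$-symmetry of each orbit family, and finally compute the Conley--Zehnder indices of the resulting nondegenerate orbits.

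I begin by identifying the Morse--Bott structure. Condition (a) in Definition \ref{mb} ensures that the only 1-periodic orbits of $H$ lying in the interior of $M$ are the critical points of $H$, which yields statement (a). On the collar $\partial M \times [0, \rho_0]$, since $H$ is $C^2$-close to $h(e^{\rho})$, one has $X_H(p, \rho) = -h'(e^{\rho}) R_{\lambda_M}(p)$, so a 1-periodic orbit corresponds to a Reeb orbit $\gamma$ on $(\partial M, \lambda_M)$ of period $h'(e^{\rho})$ traversed in the reverse direction. Since $h'$ is strictly increasing, $\rho$ is uniquely determined by $\gamma$, so each Reeb orbit gives rise to a single $S^1$-family $S_\gamma$ of Hamiltonian orbits, with the $S^1$ coming from reparametrization. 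The condition $h'' - h' > 0$ is precisely what guarantees that the transverse linearized Poincar\'e map along $S_\gamma$ has no unit eigenvalue beyond the tangential one, so $S_\gamma$ is a Morse--Bott submanifold of the loop space. On $\partial M \times [\rho_0, \infty)$, the fact that $\beta \notin \operatorname{Spec}(\partial M, \lambda_M)$ rules out any further orbits.

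Next I construct the perturbation. Choose a perfect Morse function $f : S^1 \to \RR$ with unique maximum $p_{\text{Max}}$ and unique minimum $p_{\text{min}}$. For each circle $S_\gamma$, pick a small tubular neighborhood of $S_\gamma$ in $S^1 \times \widehat{M}$, extend $f$ transversally by pullback under a retraction onto $S_\gamma$, and add a $C^2$-small time-dependent bump of this form. With the perturbation supported away from the linear end and small enough compared to $h'' - h'$, the resulting function $H'$ still lies in $\mathcal{H}$, and standard perturbation theory applied to the normal slice at each $S_\gamma$ shows that the only 1-periodic orbits of $H'$ near $S_\gamma$ are the two nondegenerate orbits $\gamma_{\text{Max}}, \gamma_{\text{min}}$ corresponding to the two critical points of $f$; no new orbits are created elsewhere.

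Finally I compute Conley--Zehnder indices. Under a symplectic trivialization $\tau$ along $\gamma$, the linearized Poincar\'e map of $H$ decomposes into its contact-hyperplane part, whose CZ index equals $-\operatorname{CZ}_\tau(\gamma)$ due to the orientation reversal $X_H = -h' R_{\lambda_M}$, and a tangential Morse--Bott block coming from the Reeb and radial directions. The Morse perturbation by $f$ resolves this degeneracy: the maximum contributes a shift of $+\tfrac12$ and the minimum $-\tfrac12$, with an additional $+\tfrac12$ coming from the sign of $h'' - h' > 0$; summing these half-integer contributions produces the stated integer indices. The main obstacle is precisely this last bookkeeping step: the orientation reversal from $X_H = -h' R_{\lambda_M}$, the $\pm \tfrac12$ shifts from the perfect Morse function on the circle, and the contribution of the sign of $h'' - h'$ must combine correctly so as to produce $-\operatorname{CZ}_\tau(\gamma_{\text{Max}}) = \operatorname{CZ}_\tau(\gamma)$ and $-\operatorname{CZ}_\tau(\gamma_{\text{min}}) = \operatorname{CZ}_\tau(\gamma) + 1$ rather than off-by-one or sign-reversed variants.
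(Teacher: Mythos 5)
The paper itself offers no proof of Theorem \ref{reeb}: it is imported verbatim from Bourgeois--Oancea \cite{bo09} and Cieliebak--Floer--Hofer--Wysocki \cite{cfhw}, so the only comparison available is with the standard argument in those references, and your outline (constant orbits from the $C^2$-small region, an $S^1$-family $S_\gamma$ for each Reeb orbit on the collar, a time-dependent perturbation built from a perfect Morse function on each circle, then an index computation) is exactly that argument in structure.

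The genuine gap is in the final index bookkeeping, which is the actual content of the statement. Taken literally, your accounting does not produce the claimed formulas: starting from the contact-hyperplane contribution $-\operatorname{CZ}_\tau(\gamma)$ and adding your shifts ($+\tfrac12$ for the maximum, $-\tfrac12$ for the minimum, and an ``additional $+\tfrac12$ from $h''-h'>0$'') gives $\operatorname{CZ}_\tau(\gamma_{\text{Max}}) = -\operatorname{CZ}_\tau(\gamma)+1$ and $\operatorname{CZ}_\tau(\gamma_{\text{min}}) = -\operatorname{CZ}_\tau(\gamma)$, i.e. precisely the off-by-one variant you warn against, rather than $-\operatorname{CZ}_\tau(\gamma_{\text{Max}}) = \operatorname{CZ}_\tau(\gamma)$ and $-\operatorname{CZ}_\tau(\gamma_{\text{min}}) = \operatorname{CZ}_\tau(\gamma)+1$. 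What is missing is an actual computation of the Robbin--Salamon index of the degenerate $(\partial_\rho, R_{\lambda_M})$-block: under this paper's convention $X_H = -h'(e^\rho)R_{\lambda_M}$ (orbits traverse the Reeb orbit backwards), the linearized flow in that plane is a shear whose half-integer contribution must come out to $-\tfrac12$, so that the family has Robbin--Salamon index $-\operatorname{CZ}_\tau(\gamma)-\tfrac12$ and the Morse shifts $\operatorname{ind}_f(p)-\tfrac12\dim S_\gamma$ (that is, $+\tfrac12$ at the maximum, $-\tfrac12$ at the minimum) then yield the stated integers. You assert the opposite sign for this shear term without computing the crossing form, and you explicitly flag this step as the main obstacle without resolving it; since everything else in the argument is routine, this unverified (and, as written, inconsistent) sign is where the proof is incomplete.
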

\subsection{Relative symplectic homology}
In this subsection, a brief background on the relative symplectic cohomolgy will be provided. We refer the readers to \cite{dgpz}, \cite{v} and \cite{vt} for more details. The \textbf{Novikov field} $\Lambda$ is defined by
\begin{align*}
    \Lambda = \left\{ \sum_{i=0}^\infty c_i T^{\lambda_i} \bigmid c_i \in \QQ, \lambda_i \in \RR\,\,\text{and}\,\, \lim_{i \to \infty} \lambda_i = \infty \right\} 
\end{align*}
where $T$ is a formal variable. There is a valuation map $val : \Lambda \to \RR \cup \{\infty\}$ given by
\begin{align*}
    val (x) = 
    \begin{cases}
      \displaystyle\min_{i} \{\lambda_i \mid c_i \neq 0 \} \,&\text{if}\,\, x = \displaystyle\sum_{i=0}^\infty c_i T^{\lambda_i} \neq 0\\
      \infty \,&\text{if}\,\, x = 0.
    \end{cases}   
\end{align*}
For any $r \in \RR$, define $\Lambda_{>r} = val^{-1}((r,\infty])$ and $\Lambda_{\geq r} = val^{-1}([r,\infty])$. In particular, we call $\Lambda_{\geq 0}$ the \textbf{Novikov ring}. For any $\Lambda_{\geq 0}$-module $A$, we can define its \textbf{completion} $\widehat{A}$ by
\begin{align*}
    \widehat{A} = \varprojlim_{r \to 0} A \otimes \Lambda_{\geq 0} / \Lambda_{\geq r}.
\end{align*}
More concrete description of $\widehat{A}$ explained in \cite{v} is the following: Let $\{a_n\}$ be a sequence in $A$. This sequence $\{a_n\}$ is called a \textbf{Cauchy sequence} if, for every $r \geq 0$, there exists $N \in \NN$ such that $a_n - a_m \in T^r A$ for all $n, m > N$. Also, $\{a_n\}$ is said to \textbf{converge to zero} if, for every $r \geq 0$, there exists $N \in \NN$ such that $a_n \in T^r A$ for all $n > N$. Then the completion $\widehat{A}$ of $A$ is isomorphic to the set of all Cauchy sequences in $A$ modulo the set of all sequences converging to zero in $A$. 
\par Let $(M, \omega)$ be a closed symplectic manifold. Let $H : S^1 \times M \to \RR$ be a Hamiltonian function on $M$. Let $x \in \mathcal{P}(H)$ and $\Tilde{x}$ be a disk capping of $x$. The action of $(x, \Tilde{x})$ is defined by
\begin{align*}
    \mathcal{A}_H(x, \Tilde{x}) = \int_{D^2} \Tilde{x}^* \omega + \int_{S^1} H_t(x(t))dt.
\end{align*}
We can associate to $(x, \Tilde{x})$ an integer using the Conley-Zehnder index. Define an equivalence relation on the set of pairs of orbits and its capping by
\begin{align*}
    (x, \Tilde{x}) \sim (y, \Tilde{y}) \,\,\text{if and only if}\,\, x = y, \,\mathcal{A}_H(x, \Tilde{x}) = \mathcal{A}_H(y, \Tilde{y}) \,\,\text{and}\,\, CZ(x, \Tilde{x}) = CZ(y, \Tilde{y}).
\end{align*}
We denote the equivalence class of $(x, \Tilde{x})$ by $[x,\Tilde{x}]$ and denote the set of all equivalence classes by $\Tilde{\mathcal{P}}(H)$. The \textbf{weighted Floer complex of $H$} is defined by
\begin{align*}
    CF_w(H) = \left\{\sum_{i=1}^{\infty}c_i [x_i, \Tilde{x_i}] \bigmid c_i \in \QQ,  [x_i, \Tilde{x_i}] \in  \Tilde{\mathcal{P}}(H)\,\,\text{and}\,\, \lim_{i \to \infty} \mathcal{A}_H([x_i, \Tilde{x_i}]) = \infty \right\}
  \end{align*}
and is graded by the Conley-Zehnder index. Explicitly, the weighted Floer complex of $H$ in degree $k$ is 
\begin{align*}
    CF_w^k(H) = \left\{\sum_{i=1}^{\infty}c_i [x_i, \Tilde{x_i}] \in CF_w(H) \bigmid \text{CZ}\left([x_i, \Tilde{x_i}]\right) = k\,\,\text{for all}\,\,i=1,2,3,\cdots. \right\}
  \end{align*}
  For $x, y \in \mathcal{P}(H)$, let $\pi_2(M, x, y)$ be the set of homotopy classes of smooth maps from $\RR \times S^1$ to $M$ which are asymptotic to $x$ and $y$ at $-\infty$ and $\infty$, respectively. Consider, for two equivalence classes $[x, \Tilde{x}], [y, \Tilde{y}] \in \Tilde{\mathcal{P}}(H)$ and $A \in \pi_2(M,x,y)$, the moduli space $\mathcal{M}(H; [x, \Tilde{x}], [y, \Tilde{y}]; A)$ of Floer  trajectories of $H$ connecting $x$ and $y$. More precisely, $\mathcal{M}(H; [x, \Tilde{x}], [y, \Tilde{y}]; A)$ is the set of smooth maps $u : \RR \times S^1 \to M$ satisfying the following.
\begin{itemize}
    \item The homotopy class of $u$ represents $A \in \pi_2(M,x, y)$.\\
    \item $\displaystyle\frac{\partial u}{\partial s} + J(u) \left( \frac{\partial u}{\partial t} - X_{H_t}(u) \right) = 0$ where $J$ is a generic almost complex structure on $M$.\\
    \item $\displaystyle\lim_{s \to -\infty} u(s,t) = x(t)$ and $\displaystyle\lim_{s \to \infty} u(s,t) = y(t)$.\\
    \item $[y, \Tilde{y}] = [y, \Tilde{x}\,\#\, (-u)] = [y, \Tilde{x}\,\#\, (-A)] $ where $\#$ denotes the connected sum.\\
\end{itemize}
Note that if $u \in \mathcal{M}(H; [x, \Tilde{x}], [y, \Tilde{y}]; A)$, then $\mathcal{A}_H([y,\Tilde{y}]) \geq \mathcal{A}_H([x, \Tilde{x}])$ because we can easily show that 
\begin{align*}
    \frac{\partial }{\partial s} \mathcal{A}_H(u(s,t)) = \int_{\RR \times S^1} \omega \left(\frac{\partial u}{\partial s}, J(u) \frac{\partial u}{\partial s}\right) ds \wedge dt = \int_{\RR \times S^1} \left\|\frac{\partial u}{\partial s}\right\|_g^2 ds \wedge dt \geq 0.
    \end{align*}
Moreover, the topological energy of $u$ is given by
\begin{align*}
    E_{\text{top}}(u) :=   \int_{S^1} H_t(y(t)) dt - \int_{S^1} H_t(x(t)) dt + \omega(A) = \mathcal{A}_H([y,\Tilde{y}]) - \mathcal{A}_H([x, \Tilde{x}]).
\end{align*}
The \textbf{weighted Floer differential} of $CF_w(H)$ is given by
\begin{align*}
    d_w [x, \Tilde{x}] = \sum_{\substack{[y, \Tilde{y}] \in \Tilde{\mathcal{P}}(H) \\ A \in \pi_2(M, x, y)}} \#\mathcal{M}(H ; [x, \Tilde{x}], [y, \Tilde{y}]; A) T^{\mathcal{A}_H([y,\Tilde{y}]) - \mathcal{A}_H([x, \Tilde{x}])} [y, \Tilde{y}].
\end{align*}
And $\#\mathcal{M}(H ; [x, \Tilde{x}], [y, \Tilde{y}]; A)$ is a suitable virtual count defined in \cite{p}. For two Hamiltonian functions $H_1$ and $H_2$ with $H_1 \leq H_2$, choose a monotone homotopy $H_s$ of Hamiltonian functions connecting them. Then there is a weighted continuation map $CF_w(H_1) \lr CF_w(H_2)$ defined by the suitable count of Floer trajectories of $H_s$ connecting the orbits of $H_1$ and $H_2$. This continuation map is also weighted by $T^{E_{\text{top}}(u)}$ in a similar way that we define the weighted differential above. Note that continuation maps increase the actions and respect the grading. \\
\begin{remark}
    If $(M, \omega)$ is a closed symplectic manifold with $\omega|_{\pi_2(M)}=0$, we can simplify some of the definitions. For $x, y \in \mathcal{P}(H)$, the moduli space space $\mathcal{M}(H;x,y)$ is the set of smooth maps $u : \RR \times S^1 \to M$ satisfying the following.
\begin{itemize}
    \item $\displaystyle\frac{\partial u}{\partial s} + J(u) \left( \frac{\partial u}{\partial t} - X_{H_t}(u) \right) = 0$ where $J$ is a generic almost complex structure on $M$.\\
    \item $\displaystyle\lim_{s \to -\infty} u(s,t) = x(t)$ and $\displaystyle\lim_{s \to \infty} u(s,t) = y(t)$.\\
\end{itemize}
The weighted Floer differential of $CF_w(H)$ is given by
\begin{align*}
    d_w x = \sum_{y\in \mathcal{P}(H)} \#\mathcal{M}(H ; x, y) T^{\mathcal{A}_H(y) - \mathcal{A}_H(x)} y
\end{align*}
Here, $\#\mathcal{M}(H ; x, y)$ is a suitable virtual count defined in \cite{p}.\\

\end{remark}
\begin{remark}\label{clw}
    Let $(M, \omega)$ be a closed symplectic manifold with $\omega|_{\pi_2(M)}=0$. Let $H : S^1 \times M \to \RR$ be a Hamiltonian function and let $x \in \mathcal{P}(H)$. The weighted Floer cohomology of $H$ with coefficient in $\Lambda$ coincides with the classical Floer cohomology of $H$ with coefficient in $\Lambda$. The isomorphism between them is induced by
    \begin{align*}
        CF(H; \Lambda) \longrightarrow CF_w(H) \ton \Lambda, \,\,x \mapsto T^{\mathcal{A}_H(x)} x.
    \end{align*}
    One special case is when $H$ is $C^2$-small. Denote the classical Morse complex and weighted Morse complex of $H$ by $CM(H)$ and $CM_w(H)$, respectively. If $H$ is $C^2$-small, then every 1-periodic orbit of $H$ is a critical point of $H$ and $CF(H) = CM(-H)$ and $CF_w(H) = CM_w(-H)$. See Remark \ref{convention}. In this case, $\mathcal{A}_H(x) = H(x)$ for each critical point $x$ of $-H$ and 
        \begin{align*}
            CM(-H ; \Lambda) \lr CM_w(-H; \Lambda),\,\,x \mapsto T^{H(x)}x
        \end{align*}
        induces an isomorphism between classical and weighted Morse cohomology.\\
   
\end{remark}
\begin{definition}
   Let $(M, \omega)$ be a closed symplectic manifold. Let $K \subset M$ be a compact subset of $M$. A \textbf{$K$-admissible Hamiltonian function} is a smooth function $H : S^1 \times M \to \RR$ satisfying $H$ is negative and $C^2$-small on $S^1 \times K$. We denote the set of all $K$-admissible Hamiltonians by $\mathcal{H}_K$. \\
    
\end{definition}
Note that the cofinal family $\{H_n\}$ of $\mathcal{H}_K$ satisfies the following.
    \begin{itemize}\label{ham}
    \item $H_n$ is negative on $S^1 \times K$ for all $n=1,2,3,\cdots$.\\
    \item $H_1 \leq H_2 \leq H_3 \leq \cdots$.\\
    \item $\displaystyle\lim_{n \to \infty} H_n(t, x) = 0$ for $x \in K$ and $\displaystyle\lim_{n \to \infty} H_n(t, x) = \infty$ for $x \in M-K$.\\
\end{itemize}
Now we are ready to define relative symplectic cohomology.
\begin{definition}
     Let $(M, \omega)$ be a closed symplectic manifold. Let $K \subset M$ be a compact subset. The \textbf{relative symplectic chomology of $K$ in $M$} is defined by
    \begin{align*}
        SH_M(K) &= H\left(\widehat{\varinjlim_{H \in \mathcal{H}_K}}CF_w(H)\right)\\& = H\left(\widehat{\varinjlim_{n \to \infty}}CF_w(H_n)\right)
    \end{align*}
    where $\{H_n\}$ is a cofinal family of $\mathcal{H}_K$. We can also define the \textbf{relative symplectic cohomology of $K$ in $M$ with coefficient in $\Lambda$} by
   
\begin{align}\label{coeff}
    SH_M(K; \Lambda) &= 
    H \left(\widehat{\varinjlim_{n \to \infty}}CF_w(H_n) \ton \Lambda\right) \nonumber  \\&= H\left(\widehat{\varinjlim_{n \to \infty}}CF_w(H_n)\right) \ton \Lambda\\&= SH_M(K) \otimes_{\Lambda_{\geq 0}} \Lambda.\nonumber \\\nonumber
\end{align}

\end{definition}
\begin{remark}
    \begin{enumerate}[label=(\alph*)]
        \item It is proved in \cite{v} that the definition of relative symplectic cohomology does not depend on the choice of cofinal family of $\mathcal{H}_K$.\\ 
        \item In the definition of relative symplectic cohomology with coefficient in $\Lambda$, the reason why we can exchange homology and tensor is as follows. Since $\Lambda$ is the field of fractions of $\Lambda_{\geq 0}$, it is flat over $\Lambda_{\geq 0}$. Then the fact that $\text{Tor}^1_{\Lambda_{\geq 0}}\left(-, \Lambda\right) = 0$ and the universal coefficient theorem justify the commutativity of \eqref{coeff}.\\
    \end{enumerate}
\end{remark}
The definition of relative symplectic cohomology can be better understood if we impose some assumption on $\partial K$. Let us recall a definition.
\begin{definition}\label{indbdd}
Let $K \subset M$ be a compact domain with contact type boundary. Assume that $c_1(TM)|_{\pi_2(M)} = 0$. We say that $\partial K$ is \textbf{index-bounded} if, for each $k \in \ZZ$, the set of periods of contractible Reeb orbits of $\partial K$ of Conley-Zehnder index $k$ is bounded.  \\
\end{definition}
The index-boundedness of $\partial K$ in Definition \ref{indbdd} is well-defined because $c_1(\xi) = \iota^* c_1(TM)$ and $c_1(TM)|_{\pi_2(M)} = 0$ where $\iota : \partial K \to M$ is the inclusion and $\xi$ is the canonical contact structure on $\partial K$.

\begin{remark}\label{kill}

   Let $(M,\omega)$ be a closed symplectically aspherical symplectic manifold. The Hamiltonian orbits of the Hamiltonian function given in Figure \ref{fig:completion} can be divided into two parts; upper orbits and lower orbits. If $K$ is a Liouville domain and $\partial K$ is index-bounded, then the completion process kills the upper orbits. To see this, let $\{H_n\}$ be a cofinal family of $\mathcal{H}_K$. Let $x \in \mathcal{P}_{H_n}$ be any orbit and $y \in \mathcal{P}_{H_{n+1}}$ be an upper orbit. Suppose that $x$ and $y$ are connceted by the continuation map and $u$ is a Floer trajectory connecting $x$ and $y$. Then
    \begin{align*}
        E_{\text{top}}(u) &= \int_{S^1} H_{n+1}(y(t)) dt - \int_{S^1} H_n(x(t)) dt + \omega(A) \\&= \int_{S^1} H_{n+1}(y(t)) dt - \int_{S^1} H_n(x(t)) dt + \int_{y} \theta - \int_x \theta
    \end{align*}
    where $\theta$ is a 1-form on the neighborhood of $\partial K$ satisfying $d \theta = \omega$. Here, we use the fact in \cite{gt} that every Floer trajectory lies in the union of $K$ and the cylindrical neighborhood of $\partial K$. Since $\displaystyle\int_{y} \theta - \int_x \theta$ is bounded due to the index-boundedness of $\partial K$ and we can choose a cofinal sequence $\{H_n\}$ so that the difference of $H_n$ and $H_{n+1}$ is large on $M - K$, the exponent of $T$ in the continuation map connecting $x$ and $y$ is greater than or equal to some positive constant for sufficiently large $n$. This can be removed by the completion. So, in the completed chain complex of relative symplectic cohomology, only lower orbits can survive.  \\  
\end{remark}

\begin{figure}
    \centering
    \caption{Completion process kills the upper orbits}
    \label{fig:completion}
    \begin{tikzpicture}
    \draw[domain=-1.4:2.3]
    plot (\x, {2.4*(1+tanh(5*(\x-0.5)))});
    \draw (-1.43,0.4)--(2.2,0.4);
    \draw[dotted] (-0.27,0.4)--(-0.27,0);
    \node at (-0.8,0.7) {$K$};
    \draw[dotted] (-0.3,0) ellipse (1.2 and 0.8);
    \draw[dotted] (1.25, 4.7) ellipse (1.2 and 0.8);
    \node at (2, 0) {Lower orbits};
    \node at (-1, 4.7){Upper orbits};
\end{tikzpicture}
\end{figure}
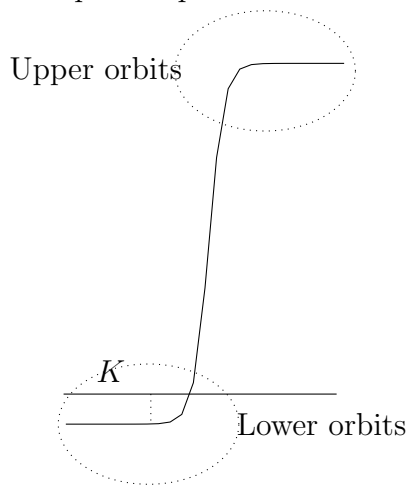
We have so far seen the definition of relative symplectic cohomology using only homological language. But once we equip ourselves with homotopical tools, we can view this in a different perspective. To this end, let us briefly review some homotopical ingredients that we need. For more detailed explanation, see \cite{dgpz}, \cite{v} and \cite{vt}. Let $n$ be a nonnegative integer and consider
\begin{align*}
    [0, 1]^n = \{(x_1, \cdots, x_n) \mid 0 \leq x_i \leq 1 \,\, \text{for all}\,\,i = 1, \cdots, n.\}.
\end{align*}
For $0 \leq k \leq n$, we call a subset of $[0,1]^n$ a \textbf{$k$-dimensional face} of $[0,1]^n$ if it is given by setting $n-k$ coordinates to either 0 or 1. In particular, we call a $0$-dimensional face a \textbf{vertex}. We denote the dimension of $F$ by $|F|$. For a face $F$ of $[0,1]^n$, the \textbf{initial vertex} of $F$ is the vertex which has the maximal number of zeros among vertices in $F$ and the \textbf{terminal vertex} of $F$ is the vertex which has the minimal number of zeros among vertices in $F$. In other words, the initial vertex of $F$ is the closest vertex from the origin in $F$ and the terminal vertex of $F$ is the farthest vertex from the origin in $F$. We denote the initial and terminal vertex of $F$ by ini$F$ and ter$F$, respectively. We say that two faces $F$ and $F'$ of $[0,1]^n$ are \textbf{adjacent} if ter$F$ = ini$F'$. If $F$ and $F'$ are adjacent, we write $F = F' \cdot F''$ to denote the situation where ini$F$ = ini$F'$, ter$F'$ = ini$F''$ and ter$F''$ = ter$F$.
\begin{definition}
    Let $R$ be a commutative ring.
    \begin{enumerate}[label=(\alph*)]
        \item An \textbf{$n$-cube (of chain complexes) over $R$} is given by a pair $$(\{C_v\}_{v \,\,\text{vertex of}\,\, [0,1]^n}, \{ f_F\}_{F \,\,\text{face of}\,\, [0,1]^n}) $$ where for every vertex $v$ of $[0,1]^n$, $C_v$ is a $\ZZ /2$-graded $R$-module and for every face $F$ of $[0,1]^n$, we have a graded module morphism $f_F : C_{\text{ini}F} \to C_{\text{ter}F}$ of degree $|F| + 1$ modulo 2. Morever, these maps are subject to the condition that
    \begin{align}\label{cubemap}
        \sum_{F= F' \cdot F''} (-1)^{|F'|} \text{sgn}(F', F'') f_{F''} f_{F'} = 0
    \end{align}
     for each face $F$. Here, sgn$(F', F'')$ is the sign of $(k, l)$-shuffle where $|F'| = k$ and $|F''| = l$.\\
     \item A \textbf{partially defined $n$-cube} is given by a collection of modules associated to some of vertices of $[0,1]^n$ and maps between them corresponding to some of faces of $[0,1]^n$ satisfying \eqref{cubemap} whenever it makes sense.\\
     \item An $n$-cube that agrees with the given partially defined cube $\mathcal{C}$ is called a \textbf{filling} of $\mathcal{C}$.\\
    \end{enumerate}
      
\end{definition}
\begin{definition}
    Let $\mathcal{C}$ and $\mathcal{C}'$ be $n$-cubes. A \textbf{map between $n$-cubes} $\mathcal{C}$ and $\mathcal{C}'$ is defined to be a filling of the partially defined $(n+1)$-cube where the $n$-dimensional faces given by $\{x_{n+1} = 0\}$ and $\{x_{n+1} = 1\}$ are the $n$-cubes $\mathcal{C}$ and $\mathcal{C}'$, respectively.\\
\end{definition}
\begin{example}\label{ex1}
\begin{enumerate}[label=(\alph*)]
    \item A 0-cube is a (co)chain complex.\\
    \item Let $C_1$ and $C_2$ be 0-cubes, namely, (co)chain complexes. Then a map between $C_1$ and $C_2$ is a (co)chain map.\\
    \item  Let $\mathcal{C} : C_0 \xrightarrow[]{c} C_1$ and $\mathcal{C'} : C_0' \xrightarrow[]{c'} C_1'$ be $1$-cubes. A map $\mathcal{C} \to \mathcal{C}'$ can be depicted as following 2-cube
    \begin{center}
       \includegraphics[scale=1.2]{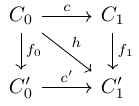}

    \end{center}
    satisfying $c' f_0 - f_1 c + h d + d h = 0$ where $d$ stands for the differential of $C_i$ and $C_i'$ for $i = 0, 1$.\\
\end{enumerate}
    
\end{example}
Now we define some operation on cubes. Before introducing it, let us fix some notations. For $1 \leq i \leq n$, let $\RR^{n-1}_i = \{(x_1, \cdots, x_n) \mid x_i = 0 \}$. Define a projection map $\pi_i : \RR^n \to \RR^{n-1}_i$ by
\begin{align*}
    \pi_i(x_1, \cdots, x_n) = (x_1, \cdots,x_{i-1}, 0, x_{i+1}, \cdots, x_n)
\end{align*}
and an inclusion map $\iota_{i,j} : \RR^{n-1}_i \to \RR^n$ by 
\begin{align*}
    \iota_{i,j}(x_1, \cdots,x_{i-1}, 0, x_{i+1}, \cdots, x_n) = (x_1, \cdots,x_{i-1}, j, x_{i+1}, \cdots, x_n) 
\end{align*} for $j = 0, 1$. For $\Bar{F} \subset [0,1]^{n-1} \subset \RR^{n-1}_i$, let $F = (\pi_i|_{[0,1]^n})^{-1}(\Bar{F})$ and $F_{i,j} = \iota_{i,j}(\Bar{F})$.
\begin{definition}
Let $\mathcal{C} = (\{C_v\}, \{f_F\})$ be an $n$-cube. 
\begin{enumerate}[label=(\alph*)]
    \item The \textbf{cone of $\mathcal{C}$ in the $i$-th direction}, denoted by $\text{cone}_i \mathcal{C}$, is an $(n-1)$-cube defined as follows. For each vertex $v$ of $[0,1]^{n-1} \subset \RR^{n-1}_i$, the corresponding module is 
\begin{align*}
    (\text{cone}_i \mathcal{C})_v = \mathcal{C}_{\iota_{i,0}(v)}[1] \oplus \mathcal{C}_{\iota_{i,1}(v)}.
\end{align*}
For each face $\Bar{F} \subset [0,1]^{n-1} \subset \RR^{n-1}_i$, the corresponding map is 
\begin{align*}
    \begin{pmatrix}
        (-1)^{|F_0| +1}f_F & 0\\
        (-1)^{\#(i,F)+1}f_F & f_{F_{i,1}}
    \end{pmatrix}
\end{align*}\
where $\#(i, F)$ denotes the position of $i$ in the set of free coordinates of $F$ relative to the order induced by $\{1, \cdots,n\}$.\\
\item We say that an $n$-cube $\mathcal{C}$ is \textbf{acyclic} if its maximally iterated cone of $\mathcal{C}$ is an acyclic complex, that is, \begin{align*}
\underbrace{\text{cone}_1 \circ \cdots \circ \text{cone}_1 }_{n\,\, \text{times}}\mathcal{C}    
\end{align*} is an acyclic complex.\\
\end{enumerate}
 
\end{definition}
\begin{example}
Consider the following 2-cube given in Example \ref{ex1}.
       \begin{center}
        \includegraphics[scale=1.2]{diagrams/example.pdf}
    \end{center}
    The cone of this 2-cube is given by
    \begin{align*}
        \left(C_0 \oplus C_0'[1] \oplus C_1[1] \oplus C_1', \,\,\begin{pmatrix}
            d & 0 & 0 & 0\\
            -f_0 &-d&0&0\\
            -c&0&-d&0\\
            h&c'&f_1&d
        \end{pmatrix} \right)
    \end{align*}\\
\end{example}

\begin{definition}\label{raytel}
\begin{enumerate}[label=(\alph*)]
    \item An \textbf{$n$-ray} is a diagram of the form 
\begin{align*}
    \mathcal{C}_1 \xrightarrow[]{\mathcal{D}_1} \mathcal{C}_2 \xrightarrow[]{\mathcal{D}_2} \mathcal{C}_3 \xrightarrow{\mathcal{D}_3} \cdots
\end{align*}
where $\mathcal{C}_i$ is an $n$-cube and $\mathcal{D}_i$ is a map from $\mathcal{C}_i$ to $\mathcal{C}_{i+1}$ for all $i = 1, 2, 3, \cdots$. We call each $\mathcal{C}_i$ a \textbf{slice} of an $n$-ray.\\
\item Let $\mathcal{C} : C_1 \xrightarrow{f_1} C_2 \xrightarrow{f_2} C_3 \xrightarrow{f_3} \cdots$ be a 1-ray. The \textbf{telescope} tel\,$\mathcal{C}$ of $\mathcal{C}$ is a chain complex with underlying module is given by 
\begin{align*}
    \displaystyle\bigoplus_{i=1}^{\infty} \left(C_i \oplus C_i[1]\right)
\end{align*} and its differential $\delta$ is given as follows. If $x \in C_i$, then $\delta x = d_i x \in C_i$ where $d_i$ is the differential of $C_i$. If $y \in C_i[1]$, then $\delta y = \Tilde{y} - d_i y + f_i(y) \in C_i \oplus C_i[1] \oplus C_{i+1}$ where $d_i$ is the differential of $C_i$ and $\Tilde{y}$ is the copy of $x$ in $C_i$. In other words, $y = \Tilde{y}$ but we think that $\Tilde{y}$ is an element of $C_i$ instead of $C_{i}[1]$.\\
\end{enumerate}

\end{definition}
\begin{remark}
Even though we only define the telescope of a 1-ray in Definition \ref{raytel}, we can still define the telescope of an $n$-ray in general. For this general construction, see \cite{dgpz} and \cite{v}.\\
\end{remark}

In some cases, we can replace the telescope of 1-ray by something that we are familiar with due to the following lemma.
\begin{lemma}[Varolgunes \cite{v}]\label{altdef}
    Let $\mathcal{C} : C_1 \xrightarrow{f_1} C_2 \xrightarrow{f_2} C_3 \xrightarrow{f_3} \cdots$ be a 1-ray. 
    \begin{enumerate}[label=(\alph*)]
        \item There exists a canonical quasi-isomorphism
    \begin{align*}
        \text{tel}\,\,\mathcal{C} \to \varinjlim_{i \to \infty} C_i.
    \end{align*}
    In other words, $H\left(\text{tel}\,\mathcal{C}\right) \cong H\left(\displaystyle\varinjlim_{i \to \infty} C_i\right)$ where $H$ denotes the homology functor.\\
    \item There exists a canonical quasi-isomorphism
    \begin{align*}
        \widehat{\text{tel}}\,\mathcal{C} \to \widehat{\varinjlim_{i \to \infty}} C_i
    \end{align*}
    where $\widehat{\text{tel}}\,\mathcal{C}$ denotes the completion of $\text{tel}\,\mathcal{C}$.\\
    \end{enumerate}
\end{lemma}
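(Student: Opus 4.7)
My plan is to prove (a) by constructing the natural collapse map from the telescope to the direct limit and verifying it is a quasi-isomorphism via a finite-truncation argument, and then to deduce (b) from (a) by exploiting the strict compatibility of everything with the $T$-adic filtration on the Novikov ring.

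For part (a), I would define a $\Lambda_{\geq 0}$-linear chain map
\[
p : \mathrm{tel}\,\mathcal{C} \;\longrightarrow\; \varinjlim_{i} C_i
\]
on generators by $p(x) = [x]$ for $x$ in the direct summand $C_i$ and $p(y) = 0$ for $y$ in the suspended summand $C_i[1]$ (with signs adjusted to match the telescope differential). The chain-map property reduces to the fact that $\tilde y$ and $f_i(y)$ represent the same class in $\varinjlim_i C_i$. To see that $p$ is a quasi-isomorphism, I would filter the telescope by its finite subtelescopes
\[
T_n \;:=\; \bigoplus_{i=1}^{n} C_i \;\oplus\; \bigoplus_{i=1}^{n-1} C_i[1],
\]
which form an exhausting chain of subcomplexes of $\mathrm{tel}\,\mathcal{C}$. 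I would show that the inclusion of the final slice $C_n \hookrightarrow T_n$ is a chain homotopy equivalence: the quotient $T_n / C_n$ is an iterated mapping cylinder of $f_1, \ldots, f_{n-2}$ capped off by the mapping cone of the identity on $C_{n-1}$, and admits an explicit contracting homotopy constructed inductively on $n$ using the identifications $C_i \cong C_i[1]$. Since cohomology commutes with sequential colimits in the category of $\Lambda_{\geq 0}$-modules, we then obtain $H(\mathrm{tel}\,\mathcal{C}) \cong \varinjlim_n H(T_n) \cong \varinjlim_n H(C_n) \cong H(\varinjlim_n C_n)$, and unwinding the constructions shows this composite is induced by $p$.

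For part (b), observe that the map $p$ is $\Lambda_{\geq 0}$-linear and strictly compatible with the $T$-adic filtrations on both sides (it introduces no powers of $T$). Since the weighted Floer complexes are free $\Lambda_{\geq 0}$-modules, tensoring the sequence $0 \to \Lambda_{\geq r} \to \Lambda_{\geq 0} \to \Lambda_{\geq 0}/\Lambda_{\geq r} \to 0$ with either side is exact, so for every $r \geq 0$ the reduction
\[
p_r \;:\; (\mathrm{tel}\,\mathcal{C})\otimes \Lambda_{\geq 0}/\Lambda_{\geq r} \;\longrightarrow\; \bigl(\varinjlim_{i} C_i\bigr) \otimes \Lambda_{\geq 0}/\Lambda_{\geq r}
\]
is a quasi-isomorphism by part (a). The completions on both sides are the inverse limits of these reductions, and the transition maps in each inverse system are surjective, hence Mittag-Leffler. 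The standard $\varprojlim^{1}$-vanishing argument then upgrades the levelwise quasi-isomorphisms $\{p_r\}$ to a quasi-isomorphism $\hat p : \widehat{\mathrm{tel}}\,\mathcal{C} \to \widehat{\varinjlim}_i C_i$.

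The main obstacle I expect is the contracting homotopy on $T_n / C_n$ in part (a): the sign conventions in the telescope differential $\delta y = \tilde y - d_i y + f_i(y)$, together with the precise meaning of the copy $\tilde y \in C_i$ under the degree shift, must be tracked carefully to exhibit a genuine null-homotopy inductively. Once that is done, the passage to the colimit in (a) and the Mittag-Leffler step in (b) are essentially formal, because all the maps are $\Lambda_{\geq 0}$-linear and the filtrations involved are strict.
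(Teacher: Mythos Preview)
The paper does not supply its own proof of this lemma; it is stated with attribution to Varolgunes \cite{v} and used as a black box. So there is no ``paper's proof'' to compare against, and your task was effectively to reconstruct the argument.

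Your outline is essentially correct and follows the standard route. For (a), the collapse map $p$ together with the exhausting filtration by finite subtelescopes $T_n$ is the canonical argument; the inductive contraction of $T_n/C_n$ works exactly because the top piece $C_{n-1}\oplus C_{n-1}[1]$ in the quotient is the cone on the identity of $C_{n-1}$, and peeling this off reduces to $T_{n-1}/C_{n-1}$. The sign bookkeeping you flag is real: with the paper's convention $\delta y = \tilde y - d_i y + f_i(y)$, the naive map $x\mapsto [x]$, $y\mapsto 0$ fails to be a chain map over $\QQ$, and one must insert alternating signs $x\in C_i \mapsto (-1)^i[x]$ (or equivalently change the sign on one summand of the telescope). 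You already anticipated this.

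For (b), your Mittag--Leffler argument is correct, but the appeal to freeness of the weighted Floer complexes is unnecessary and slightly misplaced: the lemma is stated for an arbitrary $1$-ray, and what you actually need is only that tensor product commutes with direct sums and filtered colimits, so that $(\mathrm{tel}\,\mathcal{C})\otimes \Lambda_{\geq 0}/\Lambda_{\geq r}$ is canonically the telescope of $\mathcal{C}\otimes \Lambda_{\geq 0}/\Lambda_{\geq r}$, to which part (a) applies directly. Surjectivity of the transition maps follows from right exactness of tensor, as you note, and then the Milnor $\varprojlim^1$ sequence plus the five lemma finishes the job.
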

With the aid of Lemma \ref{altdef}, we have an alternative definition of relative symplectic cohomology.
\begin{definition}\label{homdef}
    Let $(M, \omega)$ be a closed symplectic manifold and $K \subset M$ be a compact subset. Let $\{H_n\}$ be a cofinal family of $\mathcal{H}_K$.
    \begin{enumerate}[label=(\alph*)]
        \item We have a 1-ray 
    \begin{align*}
        \mathcal{C}(\{H_n\}) : CF_w(H_1) \to CF_w(H_2) \to CF_w(H_3) \to \cdots.
    \end{align*}
    The \textbf{relative symplectic cohomology of $K$ in $M$} is defined by
    \begin{align*}
        SH^{}_M(K) = H \left( \widehat{\text{tel}}\, \mathcal{C}^{}(\{H_n\}) \right).
    \end{align*}
    \item If $\omega|_{\pi_2(M)} = 0$, then we have a 1-ray
    \begin{align*}
         \mathcal{C}^{>L}(\{H_n\}) : CF_w^{>L}(H_1) \to CF_w^{>L}(H_2) \to CF_w^{>L}(H_3) \to \cdots
    \end{align*}
    for each $L \in \RR$. The \textbf{relative symplectic cohomology of $K$ in $M$ with action greater than $L$} is defined by
    \begin{align*}
        SH^{>L}_M(K) = H \left( \widehat{\text{tel}}\, \mathcal{C}^{>L}(\{H_n\}) \right).
    \end{align*}\\
        \end{enumerate}
 \end{definition}
 \begin{remark}\label{homkill}
     We saw in Remark \ref{kill} that if $(M, \omega)$ is a closed symplectically aspherical symplectic manifold and $K \subset M$ is a Liouville domain with index-bounded boundary, then only lower orbits can survive through the completion process. We can ignore upper orbits in a slightly different setting. In \cite{s24}, Sun proved using the homotopy-theoretic definition of relative symplectic cohomology as described in Definition \ref{homdef} that we can still ignore upper orbits after completion in the case that $K$ is merely compact domain with contact type and index-bounded boundary inside symplectically aspherical $(M, \omega)$.\\
 \end{remark}
 Let us recall what it means for two sets to be in descent. Let $(M, \omega)$ be a closed symplectic manifold and $K_1, K_2 \subset M$ be compact sets. Let $\{H_n^{X}\}$ be a cofinal family of $\mathcal{H}_X$ for $X = K_1, K_2, K_1 \cap K_2$ and $K_1 \cup K_2$ such that $H_n^{X} \leq H_n^{X'}$ for all $n$ whenever $X' \subset X$. Then taking telescope and completion yields the following 2-cube.
 \begin{align}\label{descent}
     \includegraphics[scale=1.2]{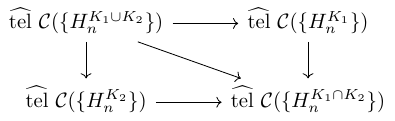}\\\nonumber
 \end{align}
\begin{definition}[Varolgunes \cite{v}]
    Let $(M, \omega)$ be a closed symplectic manifold and $K_1, K_2 \subset M$ be compact sets. We say that $K_1$ and $K_2$ satisfy \textbf{descent} if the 2-cube \eqref{descent} is acyclic.
\end{definition}
\begin{proposition}\label{mv}
    Let $(M, \omega)$ be a closed symplectic manifold with $\omega|_{\pi_2(M)} = 0$ and $K_1, K_2 \subset M$ be compact sets. 
    \begin{enumerate}[label=(\alph*)]
        \item If $K_1$ and $K_2$ satisfy descent, then following 2-cube is acyclic.
    \begin{align}\label{descentaction}
        \includegraphics[scale=1.2]{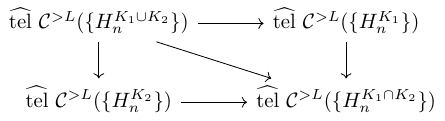}
    \end{align}\\
    \item There exists a Mayer-Vietoris exact triangle of action filtration of relative symplectic cohomology
    \begin{align}\label{mvrest}
        \includegraphics[scale=1.2]{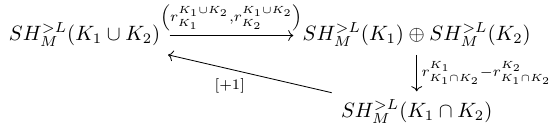}
    \end{align}
    where restriction maps appearing in Mayer-Vietoris exact triangle \eqref{mvrest} are restricted restriction maps to each action filtration. 
 Moreover, this Mayer-Vietoris property still holds if we replace coefficient ring $\Lambda_{\geq 0}$ by $\Lambda$.\\
    \end{enumerate}
\end{proposition}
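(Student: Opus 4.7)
\emph{Plan.} I will prove (a) and then deduce (b) as a formal consequence, finally extending to $\Lambda$ coefficients via flatness.

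For (a), the first step is to check that the action filtration is preserved by every structural map of the descent 2-cube \eqref{descent}. Under the hypothesis $\omega|_{\pi_2(M)}=0$ the action of a $1$-periodic orbit is independent of its capping, so $CF^{>L}_w(H)$ is well-defined as a subcomplex of $CF_w(H)$. Each Floer differential $d_w$, each continuation map along the $1$-ray direction, each ``square'' continuation map relating the four cofinal families $\{H_n^X\}$, and each higher chain homotopy that fills the $2$-cube is a count of Floer-type trajectories $u$ weighted by $T^{E_{\text{top}}(u)}$, and $E_{\text{top}}(u)=\mathcal{A}_H(\text{target})-\mathcal{A}_H(\text{source})\geq 0$. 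So every such map sends $CF^{>L}_w$ into $CF^{>L}_w$, and the filtered subcomplexes assemble into a sub-$2$-cube of $1$-rays inside the one underlying \eqref{descent}. Passing to telescopes (an additive, exact operation) and then to completions then realizes \eqref{descentaction} as a sub-$2$-cube of \eqref{descent}.

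The central step is to deduce acyclicity of \eqref{descentaction} from the descent hypothesis. Since $\text{cone}_1 \text{cone}_1$ of \eqref{descent} is acyclic, there exists a degree $-1$ contracting homotopy $\mathcal{H}$ with $d\mathcal{H}+\mathcal{H}d=\mathrm{id}$. I would then trace through Varolgunes's proof of descent in \cite{v} to verify that $\mathcal{H}$ can be assembled from counts of parametrized Floer moduli spaces (and multiplication by powers of $T$), each weighted by non-negative powers of $T$; equivalently, $\mathcal{H}$ is non-decreasing in action. Restricting such an $\mathcal{H}$ to the iterated cone of the action-filtered sub-$2$-cube then furnishes a contracting homotopy for it, establishing acyclicity of \eqref{descentaction}. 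Part (b) is then immediate: acyclicity of a $2$-cube of chain complexes is tautologically equivalent to the existence of a Mayer--Vietoris distinguished triangle joining its four vertices, which yields \eqref{mvrest} with the restriction maps induced by the filtered continuation maps. For the $\Lambda$-coefficient version, flatness of $\Lambda$ over $\Lambda_{\geq 0}$ (it is the field of fractions) means tensoring the acyclic $2$-cube with $\Lambda$ preserves acyclicity and hence preserves the exact triangle.

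The main obstacle is the claim that the contracting homotopy $\mathcal{H}$ witnessing descent can be arranged to preserve the action filtration. A subcomplex of an acyclic complex is in general not acyclic, so this cannot be obtained purely homologically: one must invoke the geometric origin of $\mathcal{H}$ in Floer moduli spaces, where every natural chain-level map carries non-negative $T$-weight. A secondary technical subtlety is verifying that the completion functor commutes with passage to the action-filtered subcomplex on the systems at hand, which does hold because each filtered piece is already $T$-adically separated and complete, so the sub-$2$-cube structure survives completion intact.
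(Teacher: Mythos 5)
Your overall architecture agrees with the paper's proof: every differential, continuation map and filling homotopy in \eqref{descent} is weighted by $T^{E_{\text{top}}}\geq 0$, so the action-filtered complexes assemble into a sub-$2$-cube; part (b) is then the long exact sequence of the (iterated) cone, i.e.\ the quasi-isomorphism from the $K_1\cup K_2$ vertex to the cone of the remaining $1$-cube; and the $\Lambda$-statement follows from flatness of $\Lambda$ over $\Lambda_{\geq 0}$. The divergence is in the central step of (a), and there your argument has a genuine gap. You rightly note that acyclicity of \eqref{descent} does not formally pass to a subcomplex, but your proposed fix --- extract from \cite{v} a contracting homotopy $\mathcal{H}$ assembled from counts of parametrized Floer solutions with non-negative $T$-weights, hence action-non-decreasing, and restrict it --- mischaracterizes how descent is actually established. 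In \cite{v} there is no geometric contracting homotopy built from moduli spaces: acyclicity of the cube is proved algebraically, by reducing modulo $\Lambda_{>0}$ (where only zero-energy, i.e.\ constant, solutions contribute, so the maps become identity/zero on matching generators) and then using completeness to upgrade acyclicity of the reduction to acyclicity over $\Lambda_{\geq 0}$; this is exactly the mechanism the present paper invokes again in the proof of Proposition \ref{mvs1} (``it suffices to consider the case when the formal variable $T$ is zero''). The homotopy produced this way arises from $T$-adic lifting, and the $T$-adic filtration is not the action filtration: a correction term $T^{r}y$ with $r>0$ but $\mathcal{A}(y)\leq L$ still escapes $CF^{>L}_w$. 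So ``tracing through the proof'' will not deliver an action-non-decreasing $\mathcal{H}$, and positing one is essentially equivalent to the statement being proved.

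The workable route --- and what the paper's terse one-line proof of (a) amounts to --- is to rerun the acyclicity criterion directly on the filtered sub-cube rather than to restrict a homotopy: because all structure maps preserve the action filtration (your first step, and the only fact the paper cites), the filtered vertices again form a complete cube of $\Lambda_{\geq 0}$-modules; its reduction mod $\Lambda_{>0}$ is acyclic by the same zero-energy computation, since the surviving solutions match generators of equal action and hence respect the cutoff at $L$; completeness then yields acyclicity of \eqref{descentaction}. With that substitution, the remainder of your argument for (b) and for $\Lambda$-coefficients coincides with the paper's and is correct.
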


\begin{proof}
  (a) It is immediate from the facts that arrows in \eqref{descent} are induced by continuation maps and that continuation maps increase the actions of Hamiltonian orbits.\\
  (b) Since the 2-cube \eqref{descentaction} is acyclic, there exsits a quasi-isomorphism 
  \begin{align*}
     \widehat{\text{tel}}\,\, \mathcal{C}(\{H_n^{K_1 \cup K_2}\}) \lr  \text{cone}_1 \left( \widehat{\text{tel}}\,\, \mathcal{C}(\{H_n^{K_1}\}) \oplus
    \widehat{\text{tel}}\,\, \mathcal{C}(\{H_n^{K_2}\}) \lr \widehat{\text{tel}}\,\, \mathcal{C}(\{H_n^{K_1 \cap K_2}\}) \right).
  \end{align*}
 Then the long exact sequence involving cone gives us the Mayer-Vietoris exact triangle. The last assertion follows from the flatness of $\Lambda$ over $\Lambda_{\geq 0}$.\\ 
\end{proof}
\section{$S^1$-equivariant relative symplectic cohomology}
\subsection{Definition} We shall combine the definition of relative symplectic cohomology with that of $S^1$-equivariant symplectic cohomology.
\begin{definition}
   Let $(M, \omega)$ be a closed symplectic manifold and let $K \subset M$ be a compact domain with contact type boundary. A \textbf{contact type $K$-admissible Hamiltonian function} is a smooth function $H : S^1 \times M \to \RR$ satisfying the following conditions.
    \begin{enumerate}[label=(\alph*)]
        \item $H$ is negative and $C^2$-small on $S^1 \times K$. Moreover, $H > - \epsilon$ on $S^1 \times K$ where $\epsilon > 0$ is the half of minimal period of Reeb orbits of $\partial K$.\\
        \item There exists $\eta \geq 0$ such that $H(t, p, \rho)$ is $C^2$-close to $h_1(e^{\rho})$ on $S^1 \times \left(\partial K \times [0, \frac{1}{3}\eta]\right)$ for some convex and increasing function $h_1$. \\
        \item $H(t, p, \rho) = \beta e^{\rho} +\beta'$ on $\partial K \times [\frac{1}{3}\eta, \frac{2 }{3}\eta]$ where $\beta \notin \text{Spec}(\partial K, \lambda_K)$ and $\beta' \in \RR$.\\
        \item $H(t, p, \rho)$ is $C^2$-close to $h_2(e^{\rho})$ on $S^1 \times \left(\partial K \times [\frac{2 }{3}\eta, \eta]\right)$ for some concave and increasing function $h_2$.\\
        \item $H$ is $C^2$-close to a constant function on $S^1 \times \left(M - K \cup \left(\partial K \times [0, \eta]\right)\right)$.
    \end{enumerate}
    We denote the set of all contact type $K$-admissible Hamiltonian functions by $\mathcal{H}^{\text{Cont}}_K$.\\
\end{definition}
Let $K \subset M$ be a compact domain with contact type boundary. For a Hamiltonian function $H \in \mathcal{H}^{\text{Cont}}_K$, define the complex $CF_w^{S^1}(H)$ by
\begin{align*}
    CF_w^{S^1}(H) = \Lambda_{\geq 0}[u] \otimes_{\Lambda_{\geq 0}} CF_w(H)
\end{align*}
where $u$ is a formal variable of degree 2. Its differential is of the form 
\begin{align}\label{equdiff}
    d_w^{S^1} (u^k \otimes x) = \sum_{i=0}^k u^{k-i} \otimes \psi_i(x) 
\end{align}
where $\psi_i$ is a weighted version of $\phi_i$. \\
\begin{definition}
   Let $(M, \omega)$ be a closed symplectic manifold and let $K \subset M$ be a compact domain with contact type boundary. The \textbf{$S^1$-equivariant relative symplectic cohomology of $K$ in $M$} is defined by
    \begin{align*}
        SH^{S^1}_M(K) &= H \left(\widehat{\varinjlim_{H \in \mathcal{H}^{\text{Cont}}_K}} CF_w^{S^1}(H)\right). 
    \end{align*}
    Also, the \textbf{$S^1$-equivariant relative symplectic homology of $K$ in $M$ with coefficient in $\Lambda$} is defined by
    \begin{align*}
        SH^{S^1}_M(K ; \Lambda) = SH^{S^1}_M(K) \ton \Lambda.\\
    \end{align*}
\end{definition}
\begin{remark}\label{alts1def}
    We can define $S^1$-equivariant relative symplectic cohomology utilizing the homotopical tools that we developed earlier: Let $\{H_n\}$ be a cofinal family of $\mathcal{H}_K^{\text{Cont}}$. Then we have a 1-ray of $S^1$-equivariant Floer complexes
    \begin{align*}
        \mathcal{C}^{S^1}(\{H_n\}) : CF_w^{S^1}(H_1) \to CF_w^{S^1}(H_2) \to CF_w^{S^1}(H_3) \to \cdots.
    \end{align*}
    Then
    \begin{align*}
        SH^{S^1}_M(K) = H \left( \widehat{\text{tel}}\, \mathcal{C}^{S^1}(\{H_n\}) \right).\\
    \end{align*}
\end{remark}


\begin{proposition}\label{diskinsphere}
    Let $S^2$ be a 2-dimensional sphere with an area form $\omega$ which is normalized for $S^2$ to have area 1. Let $K \subset S^2$ be a smooth disk of area $\Delta$. Then
    \begin{align*}
        SH^{S^1}_{S^2}(K;\Lambda) = \begin{cases}
            0 &\text{if}\,\,\Delta< \frac{1}{2}\\
            \Lambda[u] \oplus \Lambda[u] & \text{if}\,\,\Delta \geq \frac{1}{2}.
        \end{cases}
    \end{align*}
\end{proposition}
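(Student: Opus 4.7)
The plan is to split into two cases according to the size of $\Delta$, leveraging in each the non-equivariant computation from Varolgunes's thesis together with the comparison tools in Theorem \ref{similars1}.

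\textbf{Case $\Delta < 1/2$.} Here $D_\Delta$ is displaceable in $S^2$ by a small rotation, so Theorem \ref{dis} gives $SH_{S^2}(D_\Delta; \Lambda) = 0$. Corollary \ref{bothdie} then immediately yields $SH^{S^1}_{S^2}(D_\Delta; \Lambda) = 0$ with no further work.

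\textbf{Case $\Delta \geq 1/2$.} In this regime Varolgunes's computation provides $SH_{S^2}(D_\Delta; \Lambda) \cong \Lambda \oplus \Lambda$, with the two generators sitting in even cohomological degrees corresponding (via the embedding of ambient quantum cohomology) to the unit and the point class of $QH(S^2;\Lambda)$. The plan is then to invoke the spectral sequence of Theorem \ref{similars1}(c),
\begin{align*}
E_2^{p,q} \cong H^p(BS^1; \Lambda_{\geq 0}) \otimes SH_{S^2}^q(D_\Delta) \;\Longrightarrow\; SH_{S^2}^{S^1}(D_\Delta),
\end{align*}
and to show that it collapses at $E_2$ by a parity argument: the first tensor factor $H^*(BS^1; \Lambda_{\geq 0}) = \Lambda_{\geq 0}[u]$ lives in even $p$, the second factor lives in even $q$, so the entire $E_2$-page is concentrated in even total degree $p+q$, while each differential $d_r$ raises the total degree by $1$. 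Hence $d_r = 0$ for all $r \geq 2$ and $E_2 = E_\infty$. Tensoring with $\Lambda$, which is flat over $\Lambda_{\geq 0}$, then reads off $\Lambda[u] \oplus \Lambda[u]$.

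\textbf{The main obstacle} is pinning down the degree statement, namely that the two surviving generators of $SH_{S^2}(D_\Delta; \Lambda)$ land in even, not mixed, degrees, and that no hidden extension problem spoils the identification after tensoring with $\Lambda$. To settle this I would choose an explicit Morse-Bott admissible cofinal family $\{H_n\} \subset \mathcal{H}_{D_\Delta}^{\mathrm{Cont}}$ adapted to the radial foliation of $D_\Delta$ and to the complementary disk of area $1-\Delta$, perturb using Theorem \ref{reeb} to get a nondegenerate admissible family, list the surviving 1-periodic orbits together with their Conley-Zehnder indices and minimal-action disk cappings, and use Remark \ref{homkill} to discard the upper orbits after completion. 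A direct action-and-index bookkeeping, which exploits the fact that each boundary Reeb orbit admits two competing cappings, one of area $\Delta$ inside $D_\Delta$ and one of area $1-\Delta$ through $S^2 \setminus D_\Delta$, shows that exactly two generators survive the completion once $\Delta \geq 1/2$, none survive when $\Delta < 1/2$, and the two survivors sit in adjacent even degrees. With the bigrading so established, the parity-collapse argument above completes the proof.
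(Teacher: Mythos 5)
Your treatment of the case $\Delta<\frac{1}{2}$ is correct and genuinely different from the paper's: the paper computes the completed complex directly and exhibits an explicit primitive of the surviving generator $\gamma_0$, whereas you invoke displaceability, Theorem \ref{dis} and Corollary \ref{bothdie}. This is legitimate (Corollary \ref{bothdie} rests on Propositions \ref{gysin} and \ref{spectral}, which do not use Proposition \ref{diskinsphere}, so there is no circularity), but ``displaceable by a small rotation'' is not quite right for an arbitrary smooth disk of area $\Delta<\frac{1}{2}$, since such a disk need not lie in a hemisphere; you should first use the standard fact that any embedded disk of area $\Delta$ in $S^2$ is Hamiltonian isotopic to a round disk of the same area, and then displace by a rotation.

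For $\Delta\geq\frac{1}{2}$ the two arguments share their computational core but diverge at the last step, and here your sketch has two weak points. The paper, after exactly the Morse--Bott, action and index bookkeeping that you defer to your ``main obstacle,'' observes that the cofinal family is autonomous, so by \cite{bo} the equivariant differential on $\Lambda_{\geq 0}[u]\otimes \mathbf{C}$ is $\mathrm{id}\otimes d$; this gives $SH^{S^1}_{S^2}(K;\Lambda)\cong \Lambda[u]\otimes SH_{S^2}(K;\Lambda)$ together with the $\Lambda[u]$-module structure for free. Your alternative via Proposition \ref{spectral} needs, first, the parity input: under the paper's convention $\mathrm{CZ}(x,\tilde{x})=n-\mathrm{ind}(x,-H)$ with $n=1$, the two surviving classes sit in \emph{odd} degrees (they correspond to the unit and point class, which here have degrees $\pm 1$), not even degrees as you assert; the collapse argument survives because both classes have the same parity, but establishing this requires the very computation you postponed, so the spectral sequence saves you essentially nothing over the paper's route. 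Second, degeneration at $E_2$ only identifies the associated graded of the $u$-filtration: with field coefficients this determines $SH^{S^1}_{S^2}(K;\Lambda)$ as a graded $\Lambda$-vector space, but the assertion $\Lambda[u]\oplus\Lambda[u]$, read as freeness over $\Lambda[u]$, requires resolving the module extension problem (for instance by tracking the $U$-map on $E_\infty$), which your proposal does not address. Both issues are repairable, but the paper's autonomous-Hamiltonian shortcut avoids them entirely.
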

\begin{proof}
    This example is outlined in \cite{vt} and can be completed by the help of Varolgunes \cite{vc}. Let $\{h_n : [0, 1] \to \RR\}_{n \in \NN}$ be a sequence of functions satisfying
    \begin{itemize}
        \item $h_n < h_{n+1}$,\\
        \item $h_n$ is negative on $[0, \Delta]$,\\
        \item $h_n$ is linear with slope $c_n$ on $[0, \Delta]$ and on $[\Delta + \epsilon_n, 1]$ where $\displaystyle\lim_{n \to \infty} c_n = 0$ and $\displaystyle\lim_{n \to \infty} \epsilon_n = 0$,\\
        \item $h_n = h_{n1}$ on $[\Delta, \Delta +\frac{1}{3} \epsilon_n]$ where $h_{n1}$ is an increasing and convex function with 
        \begin{align*}
            \left\{h_{n1}'(x) \bigmid \Delta \leq x \leq \Delta + \frac{1}{3}\epsilon_n \right\} \cap \ZZ=\{ -1, -2, -3, \cdots, -n \},
        \end{align*}\\
         \item $h_n$ is linear on $[\Delta + \frac{1}{3} \epsilon_n, \Delta + \frac{2}{3}\epsilon_n]$ with its slope not in $\text{Spec}(\partial K)$, and\\
        \item $h_n = h_{n2}$ on $[\Delta + \frac{2}{3} \epsilon_n, \Delta + \epsilon_n]$ where $h_{n2}$ is an increasing and concave function with 
        \begin{align*}
            \left\{h_{n2}'(x) \bigmid \Delta +\frac{2}{3}\epsilon_n \leq x \leq \Delta + \epsilon_n \right\} \cap \ZZ =\{ -1, -2, -3, \cdots, -n \}.
        \end{align*}
       
            \end{itemize}
    We may think that $S^2 = \left\{ (x,y,z) \in \RR^3 \bigmid 
x^2 +y^2 +(z- \frac{1}{2})^2 = \frac{1}{4} \right\}$. Then $\{ H_n = h_n \circ m \}_{n \in \NN}$ is a cofinal sequence of $\mathcal{H}_K^{\text{Cont}}$ where $m : S^2 \to [0,1] $ is a map sending $(x,y,z)$ to $z$. Then lower orbits of $H_n$ consist of one constant orbit(minimum of $H_n$) and $n$ nonconstant periodic orbits. Similarly, upper orbits consist of one constant orbit(maximum of $H_n$) and $n$ nonconstant periodic orbits. After perturbing $H_n$ as in \cite{bo09} and \cite{cfhw}, we have exactly two orbits for each nonconstant orbit of $H_n$. We have laid out all the generators below. Note that outermost points correspond to minimum and maximum, respectively.
\begin{center}
    \includegraphics[scale=0.8]{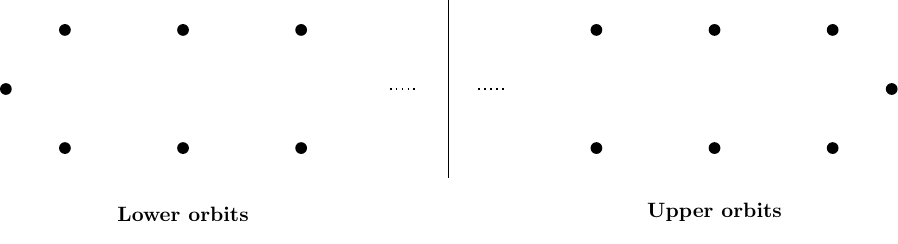}
\end{center}
First, let us consider $S^2 - \{\text{north pole}\}$ and project it to $\RR^2$. Note that this projection is orientation-reversing. Following the computation given in \cite{osurv}, we have a following diagram.
\begin{align}\label{exnorthremoved}
    \includegraphics[scale=0.8]{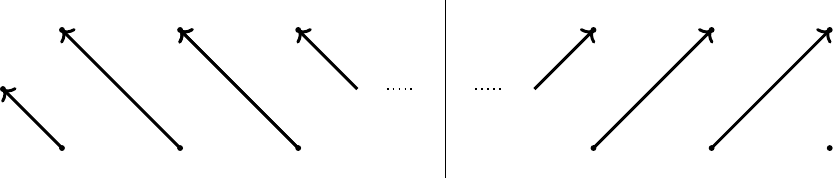}
\end{align}
In \eqref{exnorthremoved}, the rightmost point is removed because we are considering $S^2 - \{\text{north pole}\}$. But \eqref{exnorthremoved} is not sufficient since differential does not square to zero. With the winding numbers of nonconstant orbits in mind, there exist more arrows as follows.
\begin{align}\label{exnorthremoved2}
    \includegraphics[scale=0.8]{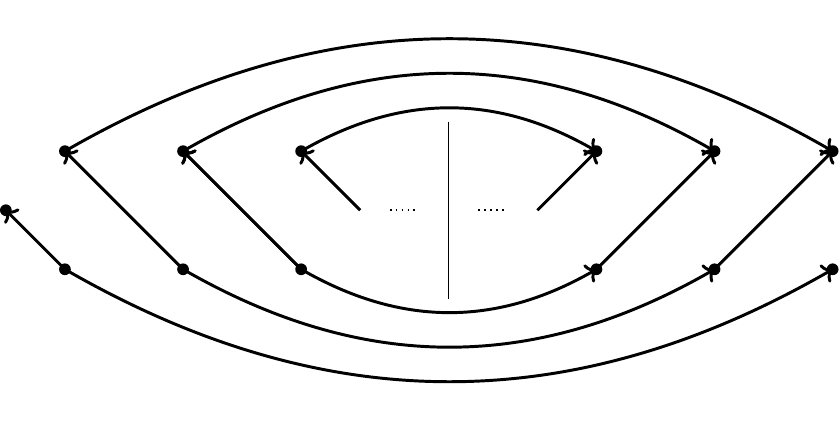}
\end{align}
Now it remains to consider the case $S^2 - \{\text{south pole}\}$. In an analogous way, we can draw a following diagram.
\begin{align}\label{exsouthremoved}
    \includegraphics[scale=0.8]{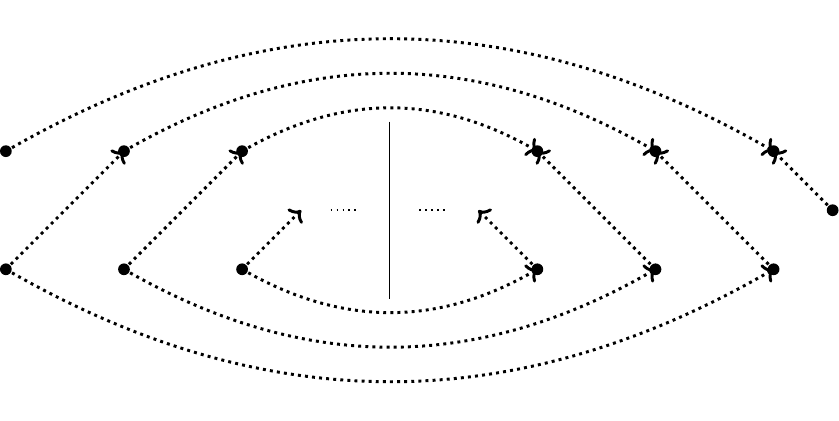}
\end{align}
Putting \eqref{exnorthremoved2} and \eqref{exsouthremoved} together and removing upper orbits as described in Remark \ref{kill}, the chain complex $\displaystyle\widehat{\varinjlim_{n \to \infty}} CF_w(H_n)$ can be depicted as follows.  
\begin{align}\label{exfinal}
    \includegraphics[]{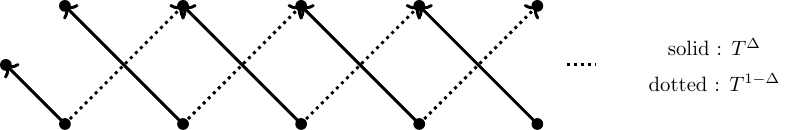}
\end{align}
Note that Floer trajectories represented by solid arrows have topological energy $\Delta$ and those represented by dotted arrows have topological energy $1 -\Delta$. To recapitulate, the chain complex $\displaystyle\widehat{\varinjlim_{n \to \infty}} CF_w(H_n)$ is given by $\widehat{\textbf{C}} \oplus \widehat{\textbf{C}}$ where $(\textbf{C},d)$ is a chain complex satisfying the following:
    \begin{itemize}
        \item \textbf{C} = $\displaystyle\bigoplus_{i \geq 0} \left(\Lambda_{\geq 0} \langle \gamma_i \rangle \oplus \Lambda_{\geq 0} \langle \beta_{i+1} \rangle \right)$ and\\
        \item $d \gamma_i = 0$ and $d \beta_{i+1} = T^{\Delta} \gamma_i + T^{1 - \Delta} \gamma_{i+1}$ for $i = 0, 1, 2, \cdots$.\\
    \end{itemize}
    Since every $H_n$ depends only on $M$, we can deduce from \cite{bo} that the chain complex for $SH^{S^1}_M(K)$ is given by $\widehat{\textbf{C}^{S^1}} \oplus \widehat{\textbf{C}^{S^1}}$ where $(\textbf{C}^{S^1}, d^{S^1})$ is a complex such that 
    \begin{align}\label{diskspheres1}
     \textbf{C}^{S^1} = \Lambda_{\geq0}[u] \otimes_{\Lambda_{\geq 0}} \textbf{C} \,\,\mathrm{and}\,\,d^{S^1} = \text{id} \otimes d   
    \end{align}
    Because of \eqref{diskspheres1}, $SH^{S^1}_M(K; \Lambda) = \Lambda[u] \ton SH_M(K ; \Lambda)$. So, it suffices to find $SH_M(K ; \Lambda)$. By direct calculation, one can check that
    \begin{align*}
      \gamma_0 + (-1)^{n} T^{(n+1)(1-2\Delta)} \gamma_{n+1} = d \left( T^{-\Delta} \sum_{i=0}^n (-1)^i T^{(1-2 \Delta)i} \beta_{i+1} \right)
    \end{align*}
    holds in $\mathbf{C}$. We are given two cases to consider.
    \begin{enumerate}[label=\arabic*.]
        \item $\Delta < \frac{1}{2}$ : Letting $n\to \infty$, $(-1)^{n} T^{(n+1)(1-2\Delta)} \gamma_{n+1}$ vanishes because it can be viewed as a sequence converging to zero in $\mathbf{C}$. Also, $\displaystyle\sum_{i=0}^n (-1)^i T^{(1-2 \Delta)i} \beta_{i+1}$ converges as $n \to \infty$ in $\mathbf{C}$ because it is a Cauchy sequence. Hence, $\displaystyle\sum_{i=0}^{\infty} (-1)^i T^{(1-2 \Delta)i} \beta_{i+1} \in \widehat{\textbf{C}}$. Since $H \left( \widehat{\mathbf{C}} ; \Lambda \right) = H \left( \widehat{\mathbf{C}} \ton \Lambda \right)$ and $\gamma_0$ is exact in $\widehat{\mathbf{C}}$, we have $H \left( \widehat{\mathbf{C}} ; \Lambda \right) = 0$.\\
        \item $\Delta \geq \frac{1}{2}$ : In this case, $[\gamma_0]$ is still a generator of $H \left( \widehat{\mathbf{C}} ; \Lambda \right)$ but it is not exact and therefore $H \left( \widehat{\mathbf{C}} ; \Lambda \right) = \Lambda$.\\ 
    \end{enumerate}
   Consequently, we have
        \begin{align*}
        SH_M(K; \Lambda) = \begin{cases}
            0 &\text{if}\,\, \Delta < \frac{1}{2}\\
            \Lambda \oplus \Lambda &\text{if}\,\, \Delta \geq \frac{1}{2}.
        \end{cases}
    \end{align*}
    Finally,
    \begin{align*}
        SH^{S^1}_M(K; \Lambda) = \begin{cases}
            0 &\text{if}\,\, \Delta < \frac{1}{2}\\
            \Lambda[u] \oplus \Lambda[u] &\text{if}\,\, \Delta \geq \frac{1}{2}.
        \end{cases}\\
    \end{align*}
\end{proof}
Now we take the action into account to form a filtration of $CF_w^{S^1}(H)$. Let $(M, \omega)$ be a closed symplectic manifold with $\omega|_{\pi_2(M)} = 0$. For $L \in \RR$, $CF_w^{S^1, > L}(H)$ be the subset of $CF_w^{S^1}(H)$ generated by $u^k \otimes x$ where $\mathcal{A}_H(x) > L$. Since the differential increases the action, it is actually a subcomplex of $CF_w^{S^1}(H)$. Moreover, continuation maps still work for this subcomplex because they also increase the action.  
\begin{definition}\label{actionhom}
   Let $(M, \omega)$ be a closed symplectic manifold with $\omega|_{\pi_2(M)} = 0$ and let $K \subset M$ be a compact domain with contact type boundary. 
    \begin{enumerate}[label=(\alph*)]
        \item The \textbf{$S^1$-equivariant relative symplectic cohomology of $K$ in $M$ with action greater than $L$} is defined by
    \begin{align*}
        SH^{S^1, > L}_M(K) = H \left(\widehat{\varinjlim_{H \in \mathcal{H}^{\text{Cont}}_K}} CF_w^{S^1, >L}(H)\right).
    \end{align*}
     Also, 
    \begin{align*}
        SH^{S^1, > L}_M(K ; \Lambda) := SH^{S^1, >L}_M(K) \ton \Lambda.
    \end{align*}
    Of course, $SH^{S^1, > -\infty}_M(K) = SH^{S^1}_M(K)$ and $SH^{S^1, > -\infty}_M(K ; \Lambda) = SH^{S^1}_M(K ; \Lambda)$.\\
    \item Note that for small enough $\epsilon > 0$, $CF_w^{S^1, > -\epsilon}(H)$ is generated by the elements $u^k \otimes x$ where $x$ is either a (lower) critical point of $H$ in $K$ or an upper 1-periodic orbit. Therefore, the quotient
    \begin{align*}
        CF^{S^1,-}_w(H) := CF_w^{S^1}(H)/CF_w^{S^1, >-\epsilon}(H)
    \end{align*}
    is generated by the elements $u^k \otimes x$ where $x$ is a lower nonconstant 1-periodic orbit. Define the \textbf{negative $S^1$-equivariant relative symplectic cohomology of $K$ in $M$} by
    \begin{align*}
        SH^{S^1,-}_M(K) = H\left(\widehat{\varinjlim_{H \in \mathcal{H}^{\text{Cont}}_K}} CF_w^{S^1}(H)/CF_w^{S^1, >-\epsilon}(H)\right).
    \end{align*}
    Furthermore, We can define the filtrations $SH^{S^1,-,>L}_M(K)$ of $SH^{S^1,-}_M(K)$ in a similar way. Tensoring $\Lambda$ over $\Lambda_{\geq0}$ with cohomologies defined above, we can replace the coefficients in $\Lambda_{\geq0}$ by the coefficients in $\Lambda$.\\
    \end{enumerate}
    \end{definition}
       \begin{remark} 
       \begin{enumerate}[label=(\alph*)]
           \item  Because our sign convention for the action functional is different from that of \cite{bo09}, \cite{bo13}, \cite{bo} and \cite{gs}, we use the Floer complex generated by Hamiltonian orbits whose actions are \textit{greater than} $L$ instead of less than $L$. More importantly, our \textit{negative} symplectic cohomology corresponds to positive symplectic homology in the literature mentioned earlier. \\
           \item The action filtrations $SH^{S^1,>L}_M(K)$ of $SH^{S^1}_M(K)$ can also be defined using ray and telescope as pointed out in Remark \ref{alts1def}.\\
           \item In a similar way that we define the negative $S^1$-equivariant relative symplectic cohomology, we can define the \textbf{negative relative symplectic cohomology of $K$ in $M$} and we denote this by $SH^{-}_M(K)$.\\
       \end{enumerate}
      

    \end{remark}

\subsection{Properties} We investigate some properties of $S^1$-equivariant relative symplectic cohomology. Properties listed below are the analogues of the results in \cite{bo}, \cite{dgpz} and \cite{v}.

\begin{proposition}\label{gysin}
    Let $(M, \omega)$ be a closed symplectic manifold and $K \subset M$ be a compact domain with contact type boundary. Then there exists a Gysin exact triangle
    \begin{center}
        \includegraphics[scale=1.24]{diagrams/fullgysin.pdf}
    \end{center}
    And this exact triangle still holds if we replace the coefficient ring $\Lambda_{\geq0}$ by $\Lambda$.
\end{proposition}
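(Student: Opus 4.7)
The plan is to construct the Gysin exact triangle by producing a short exact sequence of chain complexes at the level of each admissible Hamiltonian, verifying compatibility with continuation maps, and then passing through the telescope, completion and cohomology functors. This is an adaptation of the Bourgeois--Oancea argument \cite{bo} to the relative, completed setting.

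For each $H \in \mathcal{H}_K^{\text{Cont}}$, I would construct a short exact sequence
\begin{equation*}
0 \longrightarrow CF_w(H) \xrightarrow{\iota} CF_w^{S^1}(H) \xrightarrow{\pi} CF_w^{S^1}(H)[-2] \longrightarrow 0
\end{equation*}
where $\iota(x) = 1 \otimes x$ and $\pi(u^k \otimes x) = u^{k-1} \otimes x$ for $k \geq 1$ while $\pi(1 \otimes x) = 0$, so that $\pi$ is essentially division by $u$. Both maps are chain maps: this follows directly from the formula $d_w^{S^1}(u^k \otimes x) = \sum_{i=0}^{k} u^{k-i} \otimes \psi_i(x)$ given in \eqref{equdiff}. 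Indeed, $\psi_0$ is the weighted Floer differential, so the image of $\iota$ is closed under $d_w^{S^1}$; and the induced differential on the quotient, obtained by killing the $u^0$ component, matches $d_w^{S^1}$ on the right-hand side under the identification via division by $u$. The degree shift $[-2]$ accounts for the degree of $u$ and produces the required $[+2]$ and $[-1]$ shifts in the triangle.

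Next I would observe that this short exact sequence is compatible with $S^1$-equivariant weighted continuation maps, which have the same triangular form $u^k \otimes x \mapsto \sum_{i=0}^k u^{k-i} \otimes c_i(x)$ with $c_0$ the ordinary weighted continuation map. Thus, for a cofinal family $\{H_n\} \subset \mathcal{H}_K^{\text{Cont}}$, we obtain a short exact sequence of 1-rays which, after applying the telescope functor (which is exact, being a mapping telescope) and the completion functor, yields a short exact sequence of chain complexes. Passing to cohomology produces the long exact sequence whose triangle form is precisely the Gysin exact triangle in the statement. The $\Lambda$-coefficient version then follows by tensoring over $\Lambda_{\geq 0}$, using that $\Lambda$ is flat over $\Lambda_{\geq 0}$.

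The main technical obstacle I expect is ensuring that completion preserves the short exact sequence, since $\widehat{(-)}$ is not exact in general. Here, however, the telescopes inherit a good filtration by the action/valuation and are free $\Lambda_{\geq 0}$-modules, so the relevant Mittag--Leffler condition holds and $\varprojlim^1$ vanishes on the inverse system $\{-\otimes \Lambda_{\geq 0}/\Lambda_{\geq r}\}$. Alternatively, one can phrase this step in the homotopical language of \cite{v} by checking that the 2-cube built out of the short exact sequence (with the zero complex in the missing corner) is acyclic after completion, which then produces the desired exact triangle directly.
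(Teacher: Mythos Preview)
Your proposal is correct and follows essentially the same route as the paper: both construct the short exact sequence $0 \to CF_w(H) \to CF_w^{S^1}(H) \to CF_w^{S^1}(H)[\pm 2] \to 0$ via the inclusion $x \mapsto 1 \otimes x$ and the division-by-$u$ map, pass to the colimit, and then use flatness of the quotient together with a Mittag--Leffler argument to show completion preserves exactness. The only cosmetic differences are that the paper works with direct limits rather than telescopes (equivalent by Lemma~\ref{altdef}) and spells out the flatness/$\mathrm{Tor}$ step a bit more explicitly than your sketch.
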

\begin{proof}
    Let $\{H_n\}$ be a sequence of cofinal Hamiltonian functions defining $SH^{S^1}_M(K)$. Define a map $U : CF^{S^1}_w(H_n) \,\,\lr\,\, CF^{S^1}_w(H_n)[+2]$ by 
    \begin{align*}
        U(u^k \otimes x) = \begin{cases}
            u^{k-1} \otimes x &\text{if} \,\,k \geq 1\\
            0 &\text{if} \,\, k=0.
        \end{cases}
    \end{align*}
    Note that the map $U$ respects the action. We can construct a short exact sequence of chain complexes
    \begin{align}\label{proofcoro}
        0 \lr CF_w(H_n) \xlongrightarrow{(*)} CF^{S^1}_w(H_n) \xlongrightarrow{U} CF^{S^1}_w(H_n)[+2] \lr 0
    \end{align}
    where the map labelled by $(*)$ is $x \mapsto 1 \otimes x$. Since the direct limit is an exact functor, we have
    \begin{align*}
        0 \lr \varinjlim_{n \to \infty} CF_w(H_n) \lr \varinjlim_{n \to \infty} CF^{S^1}_w(H_n) \lr \varinjlim_{n \to \infty} CF^{S^1}(H_n)[+2] \lr 0.
    \end{align*}
    Note that $CF^{S^1}_{w}(H_n)[+2]$ is a free $\Lambda_{\geq 0}$-module generated by 1-periodic orbits of $H_n$ and hence it is a flat $\Lambda_{\geq 0}$-module. Since the direct limit of flat modules is still flat, $\displaystyle\varinjlim_{n \to \infty}CF^{S^1}_{w}(H_n)[+2]$ is flat over $\Lambda_{\geq 0}$ and this implies
    \begin{align*}
    \text{Tor}_1^{\Lambda_{\geq 0}}\left(\varinjlim_{n \to \infty}CF^{S^1}_{w}(H_n)[+2], \Lambda_{\geq 0}/ \Lambda_{\geq r}\right) = 0
\end{align*}
for each $r > 0$. Therefore, the following sequence of chain complexes is exact.
\begin{align*}
0 \lr \varinjlim_{n \to \infty}CF_{w}(H_n) \ton \Lambda_{\geq 0}/ \Lambda_{\geq r}&\longrightarrow \varinjlim_{n \to \infty}CF^{S^1}_w(H_n)\ton \Lambda_{\geq 0}/ \Lambda_{\geq r} \\&\longrightarrow \varinjlim_{n \to \infty}CF^{S^1}_{w}(H_n)[+2]\ton \Lambda_{\geq 0}/ \Lambda_{\geq r} \lr 0.
\end{align*}
For $r' > r$, the projection $\Lambda_{\geq 0}/ \Lambda_{\geq r'} \to \Lambda_{\geq 0}/ \Lambda_{\geq r}$ is surjective and, for any $\Lambda_{\geq 0}$-module $A$, $A \ton \Lambda_{\geq 0}/ \Lambda_{\geq r'} \to A \ton \Lambda_{\geq 0}/ \Lambda_{\geq r}$ is also surjective due to the right exactness of tensor product. Then by the Mittag-Leffler theorem for the inverse limit, we still have a short exact sequence
\begin{align*}
    0 \longrightarrow \widehat{\varinjlim_{n \to \infty}} CF_{w}(H_n) \longrightarrow \widehat{\varinjlim_{n \to \infty}} CF_w^{S^1}(H_n) \longrightarrow 
\widehat{\varinjlim_{n \to \infty}} CF_{w}^{S^1}(H_n) [+2]\longrightarrow 0.
\end{align*}
    Then the long exact sequence induced by a short exact sequence of chain complexes gives us a Gysin exact triangle of $SH^{S^1}_M(K)$ and $SH_M(K)$. For more detailed explanation, we refer readers to \cite{bo}.  Because $\Lambda$ is a flat $\Lambda_{\geq 0}$-module, Gysin exact triangle still holds if replace coefficient ring $\Lambda_{\geq 0}$ by $\Lambda$.\\
\end{proof}
\begin{corollary}\label{gysinaction}
    Let $(M, \omega)$ be a closed symplectic manifold with $\omega|_{\pi_2(M)} = 0$ and $K \subset M$ be a compact domain with contact type boundary.
    \begin{enumerate}[label=(\alph*)]
        \item There exists a Gysin exact triangle
    \begin{center}
        \begin{tikzcd}
    SH^{>L}_M(K) \arrow{r} &SH^{S^1,>L}_M(K) \arrow{d}{[+2]} \\
    &SH^{S^1,>L}_M(K) \arrow{lu}{[-1]}    
\end{tikzcd}
    \end{center}
    for each $L \in \RR$.\\
    \item There exists a Gysin exact triangle
    \begin{align*}
        \includegraphics[]{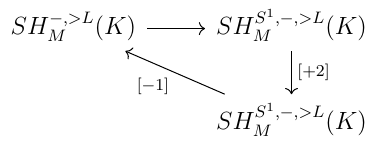}
    \end{align*}
    for each $L \in \RR$.
    \end{enumerate}
    Moreover, (a) and (b) still hold if we replace the coefficient ring $\Lambda_{\geq 0}$ by $\Lambda$.
\end{corollary}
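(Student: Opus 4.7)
The plan is to adapt the proof of Proposition \ref{gysin} to the action-filtered setting. The key point is that both maps in the short exact sequence
\[
0 \lr CF_w(H_n) \xrightarrow{(*)} CF_w^{S^1}(H_n) \xrightarrow{U} CF_w^{S^1}(H_n)[+2] \lr 0
\]
from that proof preserve the action. Indeed, $(*) : x \mapsto 1 \otimes x$ does not alter the underlying Hamiltonian orbit, and $U(u^k \otimes x) = u^{k-1} \otimes x$ (or zero) leaves the orbit unchanged as well; this was already observed in the proof of Proposition \ref{gysin}. Since $\omega|_{\pi_2(M)} = 0$, the action $\mathcal{A}_{H_n}(x)$ is independent of capping, so restricting to generators with $\mathcal{A}_{H_n}(x) > L$ yields a genuine short exact sequence of subcomplexes
\[
0 \lr CF_w^{>L}(H_n) \lr CF_w^{S^1,>L}(H_n) \lr CF_w^{S^1,>L}(H_n)[+2] \lr 0.
\]

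For part (a), I would then rerun the homological algebra of Proposition \ref{gysin} verbatim on this sequence: first pass to the direct limit over the cofinal family $\{H_n\}$ (exactness of $\varinjlim$), then observe that $\varinjlim_n CF_w^{S^1,>L}(H_n)[+2]$ is a direct limit of free $\Lambda_{\geq 0}$-modules, hence flat, so $\text{Tor}_1^{\Lambda_{\geq 0}}(-, \Lambda_{\geq 0}/\Lambda_{\geq r})$ vanishes on it; tensor with $\Lambda_{\geq 0}/\Lambda_{\geq r}$ and take the inverse limit over $r$ using Mittag-Leffler (the transition maps remain surjective by right-exactness of tensor) to obtain a short exact sequence of completed telescopes. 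The associated long exact sequence in cohomology is exactly the desired Gysin triangle for the action filtration.

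For part (b), the same recipe applies once one observes that $(*)$ and $U$ descend to the quotient $CF_w^{S^1}(H_n)/CF_w^{S^1,>-\epsilon}(H_n)$ used to define the negative version, precisely because both maps preserve the action filtration; further restricting to the $>L$ subcomplex of this quotient gives a short exact sequence between the negative analogues of $CF_w^{>L}$, $CF_w^{S^1,>L}$, and $CF_w^{S^1,>L}[+2]$, and the completion argument proceeds unchanged.

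The main obstacle is bookkeeping rather than any new idea: one must check that freeness (hence flatness) of the action-filtered complexes over $\Lambda_{\geq 0}$ is preserved, and that the surjectivity hypothesis in Mittag-Leffler survives passage to the action filtration and to the negative quotient. Both are immediate because the filtered pieces are generated by subsets of the original basis of equivalence classes of capped orbits. Finally, the statements with coefficients in $\Lambda$ follow from the flatness of $\Lambda$ over $\Lambda_{\geq 0}$, exactly as in Proposition \ref{gysin}.
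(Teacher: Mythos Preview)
Your proposal is correct and follows exactly the paper's approach: the paper observes that the maps in the short exact sequence \eqref{proofcoro} respect the action filtration, writes down the filtered short exact sequence, and says it induces the desired triangle, then for (b) passes to the quotient by $CF_w^{>-\epsilon}$. Your write-up simply spells out in more detail the direct-limit/flatness/Mittag--Leffler steps that the paper leaves implicit by pointing back to Proposition~\ref{gysin}.
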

\begin{proof}
(a) Since the maps in the exact sequence \eqref{proofcoro} respect the action filtrations, we have a short exact sequence
\begin{align}\label{proofcoro2}
    0 \lr CF^{>L}_w(H_n) \longrightarrow CF^{S^1, >L}_w(H_n) \longrightarrow{} CF^{S^1,>L}_w(H_n)[+2] \lr 0
\end{align}
    for each $L \in \RR$. This short exact sequence induces the desired exact triangle.\\
    (b) The short exact sequence \eqref{proofcoro2} can be modified to another short exact sequence
    \begin{align*}
        0 \lr CF^{>L}_w(H_n)/CF^{>-\epsilon}_w(H_n) &\longrightarrow CF^{S^1, >L}_w(H_n)/CF^{S^1, >-\epsilon}_w(H_n)\\& \longrightarrow{} CF^{S^1,>L}_w(H_n)[+2]/CF^{S^1,>-\epsilon}_w(H_n)[+2] \lr 0
    \end{align*}
    and this induces the exact triangle.
\end{proof}
\begin{proposition}\label{spectral}
    Let $(M, \omega)$ be a closed symplectic manifold and $K \subset M$ be a compact domain with contact type boundary. Then there exists a spectral sequence $E_r^{p,q}(M, K)$ converging to $SH^{S^1}_M(K)$ such that its second page is given by
    \begin{align*}
        E_2^{p,q}(M,K) \cong H^p(BS^1 ; \Lambda_{\geq 0}) \otimes SH^q_M(K).
    \end{align*}
    Also, the same result holds for $\Lambda$ coefficient.
\end{proposition}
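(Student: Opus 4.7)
The plan is to construct the spectral sequence from the natural filtration on $CF_w^{S^1}(H) = \Lambda_{\geq 0}[u] \otimes CF_w(H)$ by the cohomological degree of $u$, and then pass this filtration through the colimit-completion defining $SH^{S^1}_M(K)$. For each $H_n$ in a cofinal family of $\mathcal{H}_K^{\text{Cont}}$, define the increasing filtration
\[
F_p CF_w^{S^1}(H_n) = \bigoplus_{2k \leq p} u^k \otimes CF_w(H_n).
\]
Because the equivariant differential \eqref{equdiff} is lower-triangular in $u$ (it never raises the $u$-degree), we have $d_w^{S^1}(F_p) \subset F_p$. The associated graded $F_p/F_{p-1}$ vanishes when $p$ is odd, and for $p = 2k$ is isomorphic to $u^k \otimes CF_w(H_n)$ with induced differential $\mathrm{id}\otimes \psi_0$, where $\psi_0$ is the weighted Floer differential of $H_n$. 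Hence the $E_1$ page of the resulting spectral sequence is concentrated on the even columns, with $E_1^{p,q}(H_n) \cong u^{p/2} \otimes HF_w^q(H_n)$ for $p$ even and zero otherwise. Since $d_1$ shifts the filtration index by one, its target is always an odd column, so $d_1 = 0$ and $E_2(H_n) = E_1(H_n)$. Using $H^{2k}(BS^1; \Lambda_{\geq 0}) \cong \Lambda_{\geq 0}\langle u^k \rangle$ and the vanishing of the odd cohomology of $BS^1$, this already exhibits the $E_2$ page at the $H_n$-level in the desired tensor-product form.

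The second step is to pass to the relative invariant. The equivariant continuation maps between consecutive $CF_w^{S^1}(H_n)$'s are again lower-triangular in $u$, hence preserve the filtration and induce a filtration on $\varinjlim_n CF_w^{S^1}(H_n)$. After completion one obtains the spectral sequence converging to $SH^{S^1}_M(K)$. The main technical obstacle, parallel to the one handled in the proof of Proposition \ref{gysin}, is to ensure that the short exact sequences $0 \to F_{p-1} \to F_p \to F_p/F_{p-1} \to 0$ survive the combination of direct limit and Novikov completion. Exactness under direct limits is automatic; for the completion one observes that $CF_w^{S^1}(H_n)$ is free (hence flat) as a $\Lambda_{\geq 0}$-module, so $F_p$ and $F_p/F_{p-1}$ remain flat after taking the colimit, and the Mittag-Leffler argument from Proposition \ref{gysin} applies verbatim to show the short exact sequences persist after completion. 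Identifying the cohomology of $\widehat{\varinjlim_{n \to \infty}} (F_p/F_{p-1})$ with $u^{p/2} \otimes SH^q_M(K)$, we conclude that the $E_2$ page has the claimed form $H^p(BS^1; \Lambda_{\geq 0}) \otimes SH^q_M(K)$. The variant with $\Lambda$-coefficients is then obtained by tensoring the entire construction with the flat $\Lambda_{\geq 0}$-module $\Lambda$.
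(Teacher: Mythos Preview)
Your proposal is correct and follows essentially the same approach as the paper: both filter the $S^1$-equivariant complex by $u$-degree (the paper writes it as $F^p\mathbf{C} = \widehat{\varinjlim}(\Lambda_{\geq 0}[u]/\langle u^{p+1}\rangle)\otimes CF_w(H)$, which up to reindexing is your $F_{2p}$). Your version is in fact more detailed than the paper's sketch, as you make explicit the $E_1 = E_2$ step via the odd-column vanishing and supply the Mittag--Leffler argument needed to push the filtration short exact sequences through the completion, which the paper omits.
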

\begin{proof}
    Consider the filtration $(F^p \textbf{C})$ of $\displaystyle\widehat{\varinjlim_{H \in \mathcal{H}_K^{\text{Cont}}}} \Lambda_{\geq_0}[u] \otimes CF_w(H)$ given by
    \begin{align*}
        F^p\textbf{C} = \widehat{\varinjlim_{H \in \mathcal{H}_K^{\text{Cont}}}} \left( \Lambda_{\geq_0}[u] / \langle u^{p+1} \rangle \right) \otimes CF_w(H).
    \end{align*}
    Then the spectral sequence corresponding to the filtration $(F^p \textbf{C})$ is the desired one. Tensoring $\Lambda$ over $\Lambda_{\geq 0}$, we can obtain the same result for $\Lambda$ coefficient. \\
\end{proof}

\begin{proposition}\label{relcl}
Let $(M, \omega)$ be a closed symplectic manifold which is symplectically aspherical. Let $K \subset M$ be Liouville domain. If the map $\pi_1(\partial K) \to \pi_1(M)$ induced by the inclusion is injective and $\partial K$ is index-bounded, then there is an isomorphism
    \begin{align*}
        SH^{S^1}_M(K ; \Lambda) \cong SH^{S^1}(K; \Lambda)
    \end{align*}
    where $SH^{S^1}(K; \Lambda)$ stands for the symplectic cohomology of $K$ as a symplectic manifold with contact type boundary. Moreover, we have a following commutative diagram.
    \begin{center}
    \includegraphics[scale=1.2]{diagrams/square.pdf}
    \end{center}
    Here, the vertical maps are induced by the chain level map $x \mapsto 1 \otimes x$.
\end{proposition}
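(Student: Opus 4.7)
The plan is to upgrade the DGPZ isomorphism $SH_M(K;\Lambda) \cong SH(K;\Lambda)$ to the $S^1$-equivariant setting by combining it with the Gysin exact triangle of Proposition \ref{gysin} and the classical Bourgeois--Oancea Gysin triangle for $SH^{S^1}(K;\Lambda)$. The cleanest route is to construct a chain-level quasi-isomorphism directly and then read off both the cohomological isomorphism and the commutative square; as a cross-check, one can then invoke the five lemma against the morphism of Gysin triangles whose non-equivariant columns are filled by the DGPZ isomorphism.

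First I would fix a cofinal family $\{H_n\} \subset \mathcal{H}^{\text{Cont}}_K$ and observe that, under the stated hypotheses (symplectic asphericity, incompressibility of $\partial K$, index-boundedness), the argument of Remark \ref{kill} together with Sun's extension in Remark \ref{homkill} shows that all \emph{upper} orbits become trivial in the completion of the telescope. What remains on the relative side is generated by lower orbits, which are precisely the Hamiltonian orbits lying in $K$ and its cylindrical collar. These orbits match, bijectively and with the same actions and Conley--Zehnder indices, the generators of an admissible cofinal family on the Liouville completion $\widehat{K}$. Using Remark \ref{clw} to swap weighted for classical Floer complexes after tensoring with $\Lambda$, this yields a chain isomorphism
$$\left(\widehat{\varinjlim_n}\, CF_w(H_n)\right) \ton \Lambda \;\xrightarrow{\ \sim\ }\; \varinjlim_n CF(H_n|_{\widehat{K}};\Lambda),$$
which is the chain-level incarnation of the DGPZ theorem.

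Next I would lift this identification to the $S^1$-equivariant complexes $CF^{S^1}_w(H_n) = \Lambda_{\geq 0}[u] \otimes_{\Lambda_{\geq 0}} CF_w(H_n)$. The $S^1$-equivariant differential is determined by the operators $\psi_i$ appearing in \eqref{equdiff}; these are the weighted analogs of the Bourgeois--Oancea operators $\phi_i$ and are defined by counting parametrized Floer trajectories with auxiliary data pulled back from a finite-dimensional approximation of $ES^1$. The same topological-energy estimate used in Remark \ref{kill} for the ordinary Floer differential applies to these parametrized solutions, because they remain confined to $K$ together with its collar by an integrated maximum principle. Hence after completion the $\psi_i$ have no matrix elements from lower to upper orbits, so the chain isomorphism above commutes with the full $S^1$-equivariant differential. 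Taking cohomology gives the isomorphism \eqref{isom}.

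The commutativity of the final square is then automatic: on both sides the horizontal identification is built from the same underlying chain bijection on lower orbits, and the vertical maps are induced by $x \mapsto 1 \otimes x$, which is a natural inclusion at the chain level. The main obstacle I anticipate is the energy/maximum-principle estimate for the parametrized Floer equations defining $\psi_i$ — one needs to confirm that Sun's argument in \cite{s24}, phrased for the ordinary Floer differential, extends to the $s$-dependent parametrized situation so that no new trajectories can escape to the upper region. Once this estimate is in place, the remainder of the argument is formal manipulation of telescopes and completions.
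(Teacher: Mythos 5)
Your proposal follows essentially the same route as the paper: kill the upper orbits after completion using index-boundedness (Remarks \ref{kill} and \ref{homkill}), identify the surviving lower-orbit complex with the classical $S^1$-equivariant complex of $K$ after tensoring with $\Lambda$ via Remark \ref{clw}, and observe that the equivariant operators $\psi_i$ respect this splitting because they obey the same action/energy estimates, with the square commuting since both vertical maps are $x \mapsto 1 \otimes x$ at chain level. The paper merely packages the same idea more formally, via the short exact sequence of complexes singling out the upper-orbit subcomplex and a flatness/Mittag-Leffler argument to preserve exactness under completion, so your argument is correct and not a genuinely different approach.
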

\begin{proof}
    Note first that the injectivity of the map $\pi_1(\partial K) \to \pi_1(M)$ guarantees that the contractibility in $\partial K$ is equivalent to the contractibility in $M$. Let $\{H_n\}$ be a cofinal sequence of $\mathcal{H}^{\text{Cont}}_K$. Let $CF^{S^1}_{w, U}(H_n)$ be the subset of $CF^{S^1}_w(H_n)$ generated by $u^k \otimes x$ where $x$ is an upper orbit. Since the Floer differential increases the action and upper orbits have larger actions than lower orbits do, there are no Floer trajectories going from upper orbits to lower orbits. Hence, the subset $CF^{S^1}_{w, U}(H_n)$ of $CF^{S^1}_w(H_n)$ is a subcomplex of $CF^{S^1}_w(H_n)$. Define $CF^{S^1}_{w, L}(H_n) := CF^{S^1}_w(H_n) / CF^{S^1}_{w, U}(H_n)$. Then we obtain a short exact sequence of chain complexes
     \begin{align*}
         0 \longrightarrow CF^{S^1}_{w, U}(H_n) \longrightarrow CF^{S^1}_w(H_n) \longrightarrow CF^{S^1}_{w, L}(H_n) \lr 0.
    \end{align*}    
    Since the direct limit is an exact functor, we still have a short exact sequence of chain complexes
    \begin{align*}
        0 \lr \varinjlim_{n \to \infty}CF^{S^1}_{w, U}(H_n) \longrightarrow \varinjlim_{n \to \infty}CF^{S^1}_w(H_n) \longrightarrow \varinjlim_{n \to \infty}CF^{S^1}_{w, L}(H_n) \lr 0.
    \end{align*}
    Note that $CF^{S^1}_{w, L}(H_n)$ is a free $\Lambda_{\geq 0}$-module generated by lower orbits and hence it is a flat $\Lambda_{\geq 0}$-module. Since the direct limit of flat modules is still flat, $\displaystyle\varinjlim_{n \to \infty}CF^{S^1}_{w, L}(H_n)$ is flat over $\Lambda_{\geq 0}$ and this implies
    \begin{align*}
    \text{Tor}_1^{\Lambda_{\geq 0}}\left(\varinjlim_{n \to \infty}CF^{S^1}_{w, L}(H_n), \Lambda_{\geq 0}/ \Lambda_{\geq r}\right) = 0
\end{align*}
for each $r > 0$. Therefore, the following sequence of chain complexes is exact:
\begin{align*}
0 \lr \varinjlim_{n \to \infty}CF^{S^1}_{w, U}(H_n) \ton \Lambda_{\geq 0}/ \Lambda_{\geq r}&\longrightarrow \varinjlim_{n \to \infty}CF^{S^1}_w(H_n)\ton \Lambda_{\geq 0}/ \Lambda_{\geq r} \\&\longrightarrow \varinjlim_{n \to \infty}CF^{S^1}_{w, L}(H_n)\ton \Lambda_{\geq 0}/ \Lambda_{\geq r} \lr 0.
\end{align*}
For $r' > r$, the projection $\Lambda_{\geq 0}/ \Lambda_{\geq r'} \to \Lambda_{\geq 0}/ \Lambda_{\geq r}$ is surjective and, for any $\Lambda_{\geq 0}$-module $A$, $A \ton \Lambda_{\geq 0}/ \Lambda_{\geq r'} \to A \ton \Lambda_{\geq 0}/ \Lambda_{\geq r}$ is also surjective due to the right exactness of tensor product. Then by the Mittag-Leffler theorem for the inverse limit, we still have a short exact sequence
\begin{align}\label{ses}
    0 \longrightarrow \widehat{\varinjlim_{n \to \infty}} CF_{w, U}^{S^1}(H_n) \longrightarrow \widehat{\varinjlim_{n \to \infty}} CF_w^{S^1}(H_n) \longrightarrow 
\widehat{\varinjlim_{n \to \infty}} CF_{w, L}^{S^1}(H_n) \longrightarrow 0.
\end{align}
By Remark \ref{homkill}, $\displaystyle\widehat{\varinjlim_{n \to \infty}} CF_{w, U}^{S^1}(H_n) = 0$. Therefore, the long exact sequnece induced by \eqref{ses} implies that
\begin{align*}
   SH^{S^1}_M(K) &= H \left( \widehat{\varinjlim_{n \to \infty}} CF_w^{S^1}(H_n)\right)&&\text{Definition of relative SH}\\& \cong H\left( \widehat{\varinjlim_{n \to \infty}} CF_{w, L}^{S^1}(H_n)\right) &&\widehat{\varinjlim_{n \to \infty}} CF_{w, U}^{S^1}(H_n) = 0 \,\,\text{and}\,\, \eqref{ses}.
\end{align*}
   Finally, the chain of equalities(isomorphisms) below explains the agreement of two symplectic cohomologies.
    \begin{align*}
        SH^{S^1}_M(K ; \Lambda) &= H\left(\widehat{\varinjlim_{n \to \infty}}CF^{S^1}_w(H_n)\right)\ton \Lambda &&\text{Definition of relative SH} \\
        &\cong H\left(\widehat{\varinjlim_{n \to \infty}}CF^{S^1}_{w,L}(H_n)\right)\ton \Lambda &&\text{Ignore the upper orbits}\\
        &\cong H\left(\varinjlim_{n \to \infty}CF^{S^1}_{w,L}(H_n)\right)\ton \Lambda &&\text{$\varinjlim_{n \to \infty}CF^{S^1}_{w,L}(H_n)$ is complete}\\
        &\cong H\left(\varinjlim_{n \to \infty}CF^{S^1 }_{w,L}(H_n)\ton \Lambda\right)\\
        &\cong H\left(\varinjlim_{n \to \infty}CF^{S^1}(H_n ; \Lambda)\right) &&\text{Remark \ref{clw} and Remark \ref{kill}}\\
        &= SH^{S^1} (K; \Lambda) &&\text{Definition of classical SH}
    \end{align*}
 and thus $SH^{S^1}_M(K ; \Lambda) \cong SH^{S^1}(K ; \Lambda)$. For more detailed exposition, see \cite{dgpz}.    \\
\end{proof}
       
\begin{proposition}\label{mvs1}
    Let $(M, \omega)$ be a closed symplectic manifold and $K_1, K_2 \subset M$ be a compact domains with contact type boundaries. If $(K_1, K_2)$ is a contact pair and $K_1$ and $K_2$ Poisson-commute, then there exists a Mayer-Vietoris exact triangle for $S^1$-equivariant relative symplectic cohomology pertaining to $K_1$ and $K_2$.
    \begin{align*}
        \includegraphics[scale=1.23]{diagrams/s1mv.pdf}
    \end{align*}
   Moreover, this triangle stays exact if we replace coefficient ring $\Lambda_{\geq 0}$ by $\Lambda$. 
\end{proposition}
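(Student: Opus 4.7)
The plan is to establish $S^1$-equivariant descent for the 2-cube associated to $K_1$ and $K_2$, and then deduce the Mayer-Vietoris triangle from it, paralleling the non-equivariant theorem of Varolgunes.

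First, using the contact-pair hypothesis, I would choose compatible cofinal families $\{H_n^X\}_n \subset \mathcal{H}_X^{\text{Cont}}$ for each $X \in \{K_1, K_2, K_1 \cap K_2, K_1 \cup K_2\}$ with $H_n^X \leq H_n^{X'}$ whenever $X' \subseteq X$, and assemble the 2-cube $\mathfrak{C}^{S^1}$ whose four corners are the completed telescopes $\widehat{\text{tel}}\,\mathcal{C}^{S^1}(\{H_n^X\})$, connected by the $S^1$-equivariant continuation-induced restriction maps. By the general principle used in the proof of Proposition \ref{mv}, the Mayer-Vietoris triangle reduces to the acyclicity of the iterated cone of $\mathfrak{C}^{S^1}$; once acyclicity is established, the quasi-isomorphism between the $K_1 \cup K_2$ corner and the mapping cone of $\widehat{\text{tel}}\,\mathcal{C}^{S^1}(\{H_n^{K_1}\}) \oplus \widehat{\text{tel}}\,\mathcal{C}^{S^1}(\{H_n^{K_2}\}) \to \widehat{\text{tel}}\,\mathcal{C}^{S^1}(\{H_n^{K_1 \cap K_2}\})$ delivers the triangle, and extension to $\Lambda$-coefficients follows from flatness of $\Lambda$ over $\Lambda_{\geq 0}$.

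To prove acyclicity, I would work with the ascending filtration by $u$-degree. Since the equivariant differential \eqref{equdiff} can only lower the $u$-degree, the submodule $F_p$ of $CF^{S^1}_w(H)$ spanned by the $u^j \otimes x$ with $j \leq p$ is a subcomplex at each corner, and this property is preserved by every continuation map and by telescope formation. The resulting sub-2-cubes $F_p \mathfrak{C}^{S^1}$ fit into short exact sequences
\[
0 \longrightarrow F_{p-1}\mathfrak{C}^{S^1} \longrightarrow F_p \mathfrak{C}^{S^1} \longrightarrow \mathfrak{C}[2p] \longrightarrow 0,
\]
where $\mathfrak{C} = F_0 \mathfrak{C}^{S^1}$ is the non-equivariant 2-cube for $K_1, K_2$. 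Because $K_1$ and $K_2$ Poisson-commute, Varolgunes's descent result ensures that the iterated cone of $\mathfrak{C}$ is acyclic. Induction on $p$, using exactness of the iterated-cone construction, then shows that the iterated cone of $F_p \mathfrak{C}^{S^1}$ is acyclic for every $p$, and a spectral-sequence argument transports this conclusion to the full iterated cone of $\mathfrak{C}^{S^1}$.

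The main obstacle is controlling the interplay between the $u$-filtration and $T$-adic completion: a single element of $\widehat{\text{tel}}\,\mathcal{C}^{S^1}$ may involve a $T$-adically convergent infinite $u$-series, so the $u$-filtration need not be strictly exhaustive after completion, and a naive filtered-colimit argument breaks down. The cleanest remedy is to run the spectral sequence of the $u$-filtration on the iterated cone of $\mathfrak{C}^{S^1}$ itself: its $E_1$-page is $\Lambda_{\geq 0}[u] \otimes H_*(\text{Cone}(\mathfrak{C})) = 0$, so strong convergence delivers the desired acyclicity. Establishing strong convergence should follow from Hausdorffness of the $u$-filtration together with the same flatness and Mittag-Leffler bookkeeping used in the proof of Proposition \ref{gysin}, thereby completing the argument.
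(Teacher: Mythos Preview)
Your approach differs from the paper's in a way worth noting. The paper establishes acyclicity at the \emph{slice} level, i.e.\ for the finite 2-cube with vertices $CF_w^{S^1}(H_n^X)$, before taking telescope or completion. There the key simplification is to reduce modulo $T$: with $T=0$ the maps in the Varolgunes 2-cube send $u^k\otimes x\mapsto u^k\otimes x$ (diagonal zero), so the iterated cone is literally $(\Lambda_{\geq 0}[u]\otimes\mathbf{C},\ \mathrm{id}\otimes d)$ with $(\mathbf{C},d)$ the non-equivariant iterated cone, whose acyclicity is Varolgunes's input. Passage to telescope and completion then preserves acyclicity by the algebraic machinery in \cite{v}. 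Your route instead works directly with the completed telescopes and a $u$-degree filtration spectral sequence; this is conceptually reasonable and ultimately leads to the same $E_1$-vanishing, but it forces you to confront the completion--filtration interaction you correctly flag.

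That interaction is a genuine gap in your write-up as it stands. The ascending filtration $F_p=\{u\text{-degree}\leq p\}$ is bounded below, so Hausdorffness and completeness are automatic; what fails after $T$-adic completion is \emph{exhaustiveness}: an element such as $\sum_{j\geq 0}T^{j}\,u^{j}\otimes x$ lies in the completed complex but in no $F_p$. Consequently the spectral sequence of $(F_p)$ need not converge to the homology of the full completed iterated cone, and invoking Hausdorffness plus the Mittag--Leffler bookkeeping from Proposition~\ref{gysin} does not address this. The cleanest repair is exactly the paper's move: prove acyclicity of the iterated cone \emph{before} completion (where your $u$-filtration is exhaustive and your induction on $p$, or equivalently the spectral sequence, converges without issue), and then appeal to the fact from \cite{v} that forming the telescope and completing preserve acyclicity of the 2-cube. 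With that reordering your argument would go through and would be essentially equivalent to the paper's, though the paper's $T=0$ reduction is a shorter path to the same slice-level acyclicity.
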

\begin{proof}
    Let $\{H_n^{K_1}\}$ and $\{H_n^{K_2}\}$ be cofinal sequences of $\mathcal{H}_{K_1}$ and $\mathcal{H_{K_2}}$, respectively, which are constructed throughout the section 4 of \cite{v}. The Hamiltonian functions $H_n^{K_1}$ are assumed to be chosen in $\mathcal{H}_{K_1}^{\text{Cont}}$ because $\mathcal{H}_{K_1}^{\text{Cont}} \subset \mathcal{H}_{K_1}$. Similarly, we may assume that $H_n^{K_2} \in \mathcal{H}_{K_2}^{\text{Cont}}$. It is proved also in \cite{v} that the following 2-cube is acyclic for all $n$.
    \begin{align}\label{ham2cube}
        \includegraphics[scale=1.23]{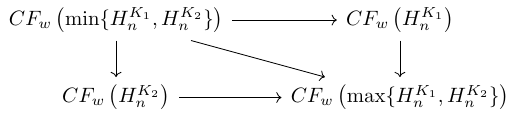}
    \end{align}
    Note that $\{\min\{H_n^{K_1}, H_n^{K_2}\}\}$ is a cofinal sequence of $\mathcal{H}_{K_1 \cup K_2}^{\text{Cont}}$ and $\{\max\{H_n^{K_1}, H_n^{K_2}\}\}$ is a cofinal sequence of $\mathcal{H}_{K_1 \cap K_2}^{\text{Cont}}$. With this in mind, we need to prove that $S^1$-equivariant version of \eqref{ham2cube}, which can be seen below, is also acyclic.
    \begin{align}\label{ham2cubes1}
        \includegraphics[scale=1.23]{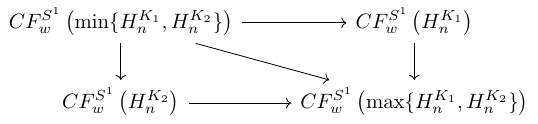}
    \end{align}
    As far as the acyclicity is concerned, it suffices to consider the case when the formal variable $T$ of the Novikov ring $\Lambda_{\geq 0}$ is zero. Hence, the diagonal map of \eqref{ham2cubes1} can be assumed to be the zero map and other maps send $u^k \otimes x$ to $u^k \otimes x$. The maximally iterated cone of \eqref{ham2cubes1} is given by $(\Lambda_{\geq 0}[u] \otimes \mathbf{C}, \text{id}\otimes d)$ where $(\mathbf{C}, d)$ is the  maximally iterated cone of \eqref{ham2cube} with $T = 0$. Since $(\mathbf{C}, d)$ is acyclic, $(\Lambda_{\geq 0}[u] \otimes \mathbf{C}, \text{id}\otimes d)$ is also acyclic. After taking telescope and completion on each vertex of \eqref{ham2cubes1}, the acyclicity is still preserved and thus we can derive a Mayer-Vietoris sequence as desired. Detailed explanation and some algebraic backgrounds can be found in \cite{v}.\\
\end{proof}
\section{Relative symplectic capacities}
In this section, we generalize the Gutt-Hutchings symplectic capacities $c^{GH}_k (k = 1, 2, 3, \dots)$ defined in \cite{gh} and symplectic (co)homology capacity $c^{SH}$ defined in \cite{fhw}.
\subsection{Ingredients} To cook up relative symplectic capacities, we need to get our ingredients ready. Many of them are borrowed and generalized from \cite{gh}. Let's fix our notation first. \\

\noindent\textbf{Notation.} From this point on, unless otherwise stated, $(M, \omega)$ will mean a closed symplectic manifold which is symplectically aspherical and $K \subset M$ will mean a compact domain with contact type and index-bounded boundary. Also, Denote the set $\{ x \in\RR \mid x < 0\}$ by $\RR_{-}$.\\

\begin{proposition}\label{building}
    \begin{enumerate}[label=(\alph*)]
        \item For each $L \in \RR_{-} \cup \{-\infty\}$, there exists a map
        \begin{align*}
            \iota_L : \shn \longrightarrow SH^{S^1, -}_M(K).
        \end{align*}
        Also, if $L_1, L_2 \in \RR_{-} \cup \{-\infty\}$ and $L_1 < L_2$, then there is a map
    \begin{align*}
        \iota_{L_1, L_2} : SH^{S^1,-,>L_2}_M(K) \longrightarrow SH^{S^1,-,>L_1}_M(K). 
    \end{align*}
    These maps form a direct system and the direct limit is 
    \begin{align*}
        \varinjlim_{L \to -\infty} SH^{S^1, - ,> L}_M(K) = SH^{S^1, -}_M(K).
    \end{align*}
    \begin{center}
        \begin{tikzcd}
SH^{S^1,-,>L_2}_M(K)  \arrow{dr}{\iota_{L_2}}  \arrow{rr}{\iota_{L_1,L_2}} & &SH^{S^1,-,>L_1}_M(K) \arrow{dl}{\iota_{L_1}} \\
& SH^{S^1,-}_M(K)\\
\end{tikzcd}

    \end{center}
    \item For each $L \in \RR_{-} \cup \{-\infty\}$, there is a map
    \begin{align*}
        U_L : \shn \longrightarrow \shn.
    \end{align*}
    Moreover, if $L_1 < L_2$, then $U_{L_1} \circ \iota_{L_1, L_2} = \iota_{L_1, L_2} \circ U_{L_2}$ and hence we can define $\displaystyle U = \varinjlim_{L \to -\infty} U_L$. \\
    \begin{center}
        \begin{tikzcd}
            SH^{S^1, -, >L_2}_M(K) \arrow{r}{\iota_{L_1, L_2}} \arrow{d}{U_{L_2}} &SH^{S^1, -, >L_1}_M(K)  \arrow{d}{U_{L_1}}\\
            SH^{S^1, -, >L_2}_M(K) \arrow{r}{\iota_{L_1, L_2}}  &SH^{S^1, -, >L_1}_M(K)     \\        
        \end{tikzcd}
    \end{center}
    \item There is a map
    \begin{align*}
        \delta : SH^{S^1,-}_M(K; \Lambda) \to H(K, \partial K ; \Lambda)  \otimes H(BS^1; \Lambda).\\
    \end{align*}
    \item If $L \in \RR_{-} \cup \{-\infty\}$ and $r > 0$, then there are isomorphisms
    \begin{align*}
        SH^{S^1,-}_{(M, \omega)}(K) \cong SH^{S^1,-}_{(M, r\omega)}(K) \,\,\text{and}\,\,SH^{S^1,-, >L}_{(M, \omega)}(K) \cong SH^{S^1,-, >rL}_{(M, r\omega)}(K).\\ 
    \end{align*}
    \end{enumerate}
    
\end{proposition}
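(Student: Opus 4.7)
The plan is to verify the four statements of the proposition in turn, using the chain-level constructions from Section 3 and the fact that the weighted Floer differential, the continuation maps, and the auxiliary operators $\psi_i$ appearing in the $S^1$-equivariant differential \eqref{equdiff} all respect the action filtration.

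For (a), I would define $\iota_L$ as the chain-level inclusion $CF^{S^1,-,>L}_w(H) \hookrightarrow CF^{S^1,-}_w(H)$. Since the differential and continuation maps both increase action, this inclusion is a chain map compatible with the directed system indexing $SH^{S^1,-}_M(K)$, so taking telescope and completion yields $\iota_L$ on cohomology. For $L_1 < L_2$ the containment $CF^{S^1,-,>L_2}_w \subset CF^{S^1,-,>L_1}_w$ gives $\iota_{L_1,L_2}$, and the identity $\iota_{L_1} \circ \iota_{L_1,L_2} = \iota_{L_2}$ is immediate at the chain level. The direct-limit identification reduces to the chain-level fact that every generator has finite action, combined with exactness of direct limits and the commutation of direct limits with completion; the same flatness and Mittag-Leffler arguments used in the proof of Proposition \ref{gysin} carry over verbatim.

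For (b), the operator $U(u^k \otimes x) = u^{k-1} \otimes x$ for $k \geq 1$ and $U(1 \otimes x) = 0$, as introduced in the proof of Proposition \ref{gysin}, is a chain map that leaves the underlying orbit $x$ unchanged and therefore preserves the action filtration. It commutes with the inclusions of action-truncated subcomplexes and descends to the quotient by $CF^{S^1,>-\epsilon}_w(H)$; passing to telescope, completion, and direct limit produces the operators $U_L$ together with the required compatibility $U_{L_1} \circ \iota_{L_1,L_2} = \iota_{L_1,L_2} \circ U_{L_2}$.

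For (c), the map $\delta$ will be constructed as the connecting homomorphism of the short exact sequence
\begin{align*}
0 \lr CF^{S^1, >-\epsilon}_w(H) \lr CF^{S^1}_w(H) \lr CF^{S^1,-}_w(H) \lr 0.
\end{align*}
Arguing as in the proof of Proposition \ref{gysin}, flatness of the equivariant complex and the Mittag-Leffler property preserve exactness upon passing to direct limits, completion, and tensoring with $\Lambda$, yielding a long exact sequence whose connecting homomorphism lands in the cohomology of the completed $S^1$-equivariant subcomplex generated by orbits of action greater than $-\epsilon$. By Remark \ref{homkill}, the upper orbits die in the completion, so this cohomology reduces to the $S^1$-equivariant Morse cohomology of $-H$ on $K$ with the boundary condition imposed by the convex collar near $\partial K$, which is canonically $H(K,\partial K;\Lambda) \otimes H(BS^1;\Lambda)$. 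The main obstacle is precisely this last identification: one must verify that the $\psi_i$ components of the equivariant differential also localize to the relative Morse complex after completion, in parallel with the analysis underlying the proof of Proposition \ref{relcl}. For (d), the joint rescalings $\omega \mapsto r\omega$ and $H \mapsto rH$ furnish a bijection between $\mathcal{H}^{\text{Cont}}_K$ for $(M,\omega)$ and $(M,r\omega)$ which preserves $1$-periodic orbits, multiplies every action and every topological energy by $r$, and intertwines the Hamiltonian vector fields. The weighted chain complexes therefore correspond under the substitution $T^a \mapsto T^{ra}$, which intertwines the differentials and is compatible with Novikov completion; on cohomology this yields the stated isomorphisms, with the action filtration parameter rescaled as $L \mapsto rL$.
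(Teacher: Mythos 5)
Your proposal follows essentially the same route as the paper's proof: $\iota_L$, $\iota_{L_1,L_2}$ and $U_L$ are obtained from the action-preserving chain-level inclusions and the $u$-shift operator of Proposition \ref{gysin}, passed through direct limit and completion; $\delta$ is the connecting homomorphism of the short exact sequence $0 \to CF^{S^1,>-\epsilon}_w(H) \to CF^{S^1}_w(H) \to CF^{S^1,-}_w(H) \to 0$ using the same flatness and Mittag--Leffler argument, with the $>-\epsilon$ piece reduced (after completion kills the upper orbits) to the Morse complex on $K$; and (d) is the same rescaling of $\omega$, $H$ and the Novikov weights. The point you flag as the main obstacle is dealt with in the paper simply by noting that on the completed $>-\epsilon$ part the equivariant differential is the (weighted) Morse differential tensored with the identity on $\Lambda[u]$, so that its cohomology is $H(K,\partial K;\Lambda)\otimes H(BS^1;\Lambda)$ via Remark \ref{clw} and Viterbo's computation.
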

\begin{proof}
(a) Let $H : S^1 \times M \to \RR$ be a contact type $K$-admissible Hamiltonian function. For each $L \in \RR_{-} \cup \{-\infty\}$, there exists an inclusion map
\begin{align*}
    CF_w^{S^1, > L}(H) \lr CF_w^{S^1}(H).
\end{align*}
Also, for $L_1 < L_2$, there is an inclusion map
\begin{align*}
     CF_w^{S^1, >L_2}(H) \longrightarrow CF_w^{S^1, >L_1}(H).
\end{align*}
These inclusion maps induce maps
\begin{align}\label{direct}
   CF_w^{S^1, -, > L}(H)  \lr  CF_w^{S^1, -}(H)
\end{align}
and
\begin{align*}
    CF_w^{S^1, -, >L_2}(H) \longrightarrow  CF_w^{S^1, -, >L_1}(H)
\end{align*}
after factoring out by $CF^{S^1, >-\epsilon}_w(H)$ for small enough $\epsilon > 0$. Furthermore, these maps form a commutative triangle below.
\begin{center}
    \includegraphics[scale=1.23]{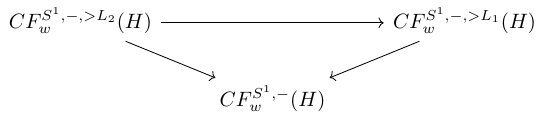}
\end{center}
By the definition of direct limit, we have a canonical map
\begin{align}\label{direct2}
    CF_w^{S^1, -}(H) \lr \varinjlim_{H \in \mathcal{H}^{\text{Cont}}_K} CF_w^{S^1, -}(H).
\end{align}
Combining maps in \eqref{direct} and \eqref{direct2}, we have
\begin{align*}
    CF_w^{S^1, -, > L}(H)  \lr \varinjlim_{H \in \mathcal{H}_K^{\text{Cont}}} CF_w^{S^1, -}(H).
\end{align*}
Then the universal property of direct limit guarantees that there exists a canonical map 
\begin{align}\label{direct3}
    \varinjlim_{H \in \mathcal{H}_K^{\text{Cont}}} CF_w^{S^1, -, >L}(H) \lr \varinjlim_{H \in \mathcal{H}_K^{\text{Cont}}} CF_w^{S^1, -}(H).
\end{align}
Now we prove that the map \eqref{direct3} induces a map on completion. For notational convenience, let $\mathbf{C}^{>L} = \displaystyle\varinjlim_{H \in \mathcal{H}_K^{\text{Cont}}} CF_w^{S^1, -, >L}(H)$ and $\mathbf{C} = \displaystyle\varinjlim_{H \in \mathcal{H}_K^{\text{Cont}}} CF_w^{S^1, -}(H)$. First of all, by tensoring $\Lambda_{\geq 0} / \Lambda_{\geq r}$ over $\Lambda_{\geq 0}$ on each side of \eqref{direct3}, we get
\begin{align}\label{inverse}
   \mathbf{C}^{>L} \ton \Lambda_{\geq 0} / \Lambda_{\geq r} \lr  \mathbf{C} \ton \Lambda_{\geq 0} / \Lambda_{\geq r}.
\end{align}
By the definition of inverse limit, we have a map
\begin{align}\label{inverse1}
    \widehat{\mathbf{C}^{>L}} \lr \mathbf{C}^{>L} \ton \Lambda_{\geq 0} / \Lambda_{\geq r}.
\end{align}
Composing maps \eqref{inverse} and \eqref{inverse1}, we obtain a map
\begin{align*}
    \widehat{\mathbf{C}^{>L}}  \lr  \mathbf{C} \ton \Lambda_{\geq 0} / \Lambda_{\geq r}.
\end{align*}
Moreover, for $r' > r$, we have a following commutative diagram.
\begin{center}
    \includegraphics[scale=1.23]{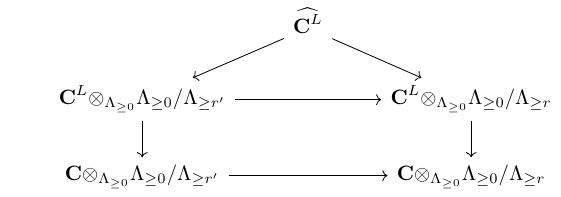}
\end{center}
Hence, by the universal property of inverse limit, we can assert that there exists a map
\begin{align*}
    \widehat{\varinjlim_{H \in \mathcal{H}_K^{\text{Cont}}}} CF_w^{S^1, -, >L}(H) = \widehat{\mathbf{C}^{>L}}  \lr  \widehat{\mathbf{C}} = \widehat{\varinjlim_{H \in \mathcal{H}_K^{\text{Cont}}}} CF_w^{S^1, -}(H).
\end{align*}
Taking homology, we finally get the desired map $\iota_L$
\begin{align*}
   SH^{S^1,-,>L}_M(K) = H \left( \widehat{\varinjlim_{H \in \mathcal{H}_K^{\text{Cont}}}} CF_w^{S^1, -, >L}(H)\right) \lr H \left( \widehat{\varinjlim_{H \in \mathcal{H}_K^{\text{Cont}}}} CF_w^{S^1, -}(H)\right) = SH^{S^1,-}_M(K).
\end{align*}
In the same fashion, we can construct the map $\iota_{L_1, L_2}$.
\\
(b) Let $H : S^1 \times M \to \RR$ be a contact type $K$-admissible Hamiltonian function. Define a map $U : CF^{S^1}_w(H) \,\,\lr\,\, CF^{S^1}_w(H)[+2]$ by 
    \begin{align*}
        U(u^k \otimes x) = \begin{cases}
            u^{k-1} \otimes x &\text{if} \,\,k \geq 1\\
            0 &\text{if} \,\, k=0.
        \end{cases}
    \end{align*}
Since this map $U$ respects the action, we obtain the desired map $U_L$ in the same way that we did in the proof of (a).\\
(c) Let $H : S^1 \times M \to \RR$ be a contact type $K$-admissible Hamiltonian function. We have a short exact sequence of chain complexes
\begin{align*}
    0 \longrightarrow CF_w^{S^1, >-\epsilon}(H) \longrightarrow CF_w^{S^1}(H) \longrightarrow CF_w^{S^1,-}(H) \longrightarrow 0
\end{align*}
for small enough $\epsilon > 0$. Since direct limit is an exact functor, we also have
\begin{align*}
    0 \longrightarrow \varinjlim_{H \in \mathcal{H}^{\text{Cont}}_K} CF_w^{S^1, >-\epsilon}(H) \longrightarrow \varinjlim_{H \in \mathcal{H}^{\text{Cont}}_K} CF_w^{S^1}(H) \longrightarrow 
\varinjlim_{H \in \mathcal{H}^{\text{Cont}}_K} CF_w^{S^1,-}(H) \longrightarrow 0.
\end{align*}
Note that $CF_w^{S^1,-}(H)$ is a free $\Lambda_{\geq 0}$-module generated by nonconstant periodic orbits of $H$ and hence it is a flat module over $\Lambda_{\geq 0}$. Since the direct limit of flat modules is still flat, we know that $\displaystyle \varinjlim_{H \in \mathcal{H}^{\text{Cont}}_K} CF_w^{S^1,-}(H)$ is a flat $\Lambda_{\geq 0}$-module. This flatness implies that 
\begin{align*}
    \text{Tor}_1^{\Lambda_{\geq 0}}\left(\varinjlim_{H \in \mathcal{H}^{\text{Cont}}_K} CF_w^{S^1,-}(H), \Lambda_{\geq 0}/ \Lambda_{\geq r}\right) = 0
\end{align*}
for each $r > 0$. Therefore, we have the following long exact sequence of chain complexes:
\begin{align*}
    0 \longrightarrow \varinjlim_{H \in \mathcal{H}^{\text{Cont}}_K} CF_w^{S^1, >-\epsilon}(H) \ton \Lambda_{\geq 0}/ \Lambda_{\geq r} &\longrightarrow \varinjlim_{H \in \mathcal{H}^{\text{Cont}}_K} CF_w^{S^1}(H) \ton \Lambda_{\geq 0}/ \Lambda_{\geq r}\\&\longrightarrow 
\varinjlim_{H \in \mathcal{H}^{\text{Cont}}_K} CF_w^{S^1,-}(H)\ton \Lambda_{\geq 0}/ \Lambda_{\geq r} \longrightarrow 0.
\end{align*}
For $r' > r$, the projection $\Lambda_{\geq 0}/ \Lambda_{\geq r'} \to \Lambda_{\geq 0}/ \Lambda_{\geq r}$ is surjective and, for any $\Lambda_{\geq 0}$-module $A$, $A \ton \Lambda_{\geq 0}/ \Lambda_{\geq r'} \to A \ton \Lambda_{\geq 0}/ \Lambda_{\geq r}$ is also surjective due to the right exactness of tensor product. Then by the Mittag-Leffler theorem for the inverse limit, we still have a short exact sequence
\begin{align*}
    0 \longrightarrow \widehat{\varinjlim_{H \in \mathcal{H}^{\text{Cont}}_K}} CF_w^{S^1, >-\epsilon}(H) \longrightarrow \widehat{\varinjlim_{H \in \mathcal{H}^{\text{Cont}}_K}} CF_w^{S^1}(H) \longrightarrow 
\widehat{\varinjlim_{H \in \mathcal{H}^{\text{Cont}}_K}} CF_w^{S^1,-}(H) \longrightarrow 0.
\end{align*}
From this short exact sequence, we obtain a long exact sequence
\begin{center}
    \begin{tikzcd}
    SH^{S^1, >-\epsilon}_M(K) \arrow{r} &SH^{S^1}_M(K) \arrow{d}\\
    &SH^{S^1,-}_M(K) \arrow{lu}{[+1]}
\end{tikzcd}

\end{center}
where 
\begin{align}\label{delta}
    SH^{S^1, >-\epsilon}_M(K) = H \left(\widehat{\varinjlim_{H \in \mathcal{H}^{\text{Cont}}_K}}CF^{S^1, >-\epsilon}(H)\right).
\end{align}
To compute \eqref{delta}, note that the completed chain complex $\displaystyle\widehat{\varinjlim_{H \in \mathcal{H}^{\text{Cont}}_K}}CF_w^{S^1, >-\epsilon}(H)$ consists only of (equivalence classes of) critical points of $H$'s. Since the complex $\displaystyle\varinjlim_{H \in \mathcal{H}^{\text{Cont}}_K}CF_w^{S^1, >-\epsilon}(H)$ is complete, we can simply get rid of the hat symbol, that is,
\begin{align*}
    \widehat{\varinjlim_{H \in \mathcal{H}^{\text{Cont}}_K}}CF_w^{S^1, >-\epsilon}(H) \cong \varinjlim_{H \in \mathcal{H}^{\text{Cont}}_K}CF_w^{S^1, >-\epsilon}(H).
\end{align*}
Also, by Remark \ref{convention}, we have
\begin{align*}
    CF_w^{S^1, >-\epsilon}(H) \cong CM_w^{S^1}(-H|_{S^1 \times K})
\end{align*}
where $CM_w^{S^1}(-H|_{S^1 \times K})$ is the weighted $S^1$-equivariant Morse complex of $-H|_{S^1 \times K}$. Since the direct limit commutes with homology, it suffices to show that 
\begin{align}\label{homology}
    H\left(CM_w^{S^1}(-H|_{S^1 \times K}) \ton \Lambda\right) \cong H(K, \partial K; \Lambda) \otimes H(BS^1 ;\Lambda)
\end{align}
By Remark \ref{clw}, 
\begin{align}\label{clwmorse}
    H(CM_w(-H|_{S^1 \times K} ; \Lambda)) \cong H(CM(-H|_{S^1 \times K} ; \Lambda)).
\end{align}
Following the proof in \cite{vi}, it can be verified that
\begin{align}\label{vitiso}
    H\left(CM(-H|_{S^1 \times K} ; \Lambda)\right) \cong H(K, \partial K ; \Lambda).
\end{align}
Since the differential of $CM_w(-H|_{S^1 \times K}) ; \Lambda) \ton \Lambda[u]$ is the tensor product of the Morse differential on $CM(-H|_{S^1 \times K}) ; \Lambda)$ and the identity on $\Lambda[u]$, we have
\begin{align*}
    H\left(CM_w^{S^1}(-H|_{S^1 \times K} ; \Lambda) \ton \Lambda\right) & = H \left(CM_w(-H|_{S^1 \times K} ; \Lambda\right) \ton \Lambda[u]) \\& \cong H \left(CM_w(-H|_{S^1 \times K} ; \Lambda)\right) \ton \Lambda[u] \\& \cong H(CM(-H|_{S^1 \times K} ; \Lambda)) \ton \Lambda[u] &&\text{By}\,\,\eqref{clwmorse} \\&\cong H(K, \partial K ;\Lambda) \ton \Lambda[u] &&\text{By}\,\,\eqref{vitiso} \\& \cong H(K, \partial K ; \Lambda)  \ton H(BS^1; \Lambda).
\end{align*}
(d) Let $H : S^1 \times M \to \RR$ be a contact type $K$-admissible Hamiltonian function and $r > 0$. Let $CF_w^{S^1}(H, (M, \omega))$ denote the Floer complex of $H$ on the symplectic manifold $(M, \omega)$. Then we have an isomorphism of chain complexes
\begin{align*}
    CF_w^{S^1, >L}(H, (M, \omega)) \cong CF_w^{S^1, > rL}(rH, (M, r\omega))
\end{align*}
because
 \begin{itemize}
        \item $x \in CF(H, M, \omega)$ if and only if $x \in CF(rH, M, r\omega)$,\\
       \item $X_{(H,M,\omega)} = X_{(rH,M,r\omega)}$ where $X_{(H,M,\omega)}$ denotes the Hamiltonian vector field of $H$ on $(M, \omega)$,\\
        \item $u : \RR \times S^1 \longrightarrow M$ is a Floer trajectory connecting $x$ and $y$ in $CF(H, M, \omega)$ if and only if it connects $x$ and $y$ in $CF(rH, M, r\omega)$, and\\
        \item $\mathcal{A}_{(H, M, \omega)} ([x, \Tilde{x}]) > L$ if and only if $\mathcal{A}_{(H, M, r \omega)} ([x, \Tilde{x}]) > rL$ where $\mathcal{A}_{(H,M,\omega)}$ denotes the action functional of $H$ defined on $(M, \omega)$.\\
    \end{itemize}
Note that the first three bullet points imply that $HF(rH, M, r \omega) \cong HF(H, M, \omega)$ and combining the last bullet we can prove that $HF^{>rL}(rH, M, r \omega) \cong HF^{>L}(H, M, \omega)$. Taking direct limit and completion, we can conclude the proof.\\
\end{proof}
\begin{corollary}\label{firstgh}
     For each $L \in \RR_{-} \cup \{-\infty\}$, we have a following exact triangle.
      \begin{align}\label{ET}
        \includegraphics[scale=1.23]{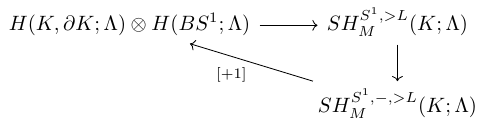}
    \end{align}
    
   \end{corollary}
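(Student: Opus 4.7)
The plan is to promote the Gysin-type short exact sequence from the proof of Proposition \ref{gysin} to the negative part with action filtration, using exactly the completion strategy deployed there. The key observation is that both the inclusion $x \mapsto 1 \otimes x$ of $CF_w(H)$ into $CF_w^{S^1}(H)$ and the map $U(u^k \otimes x) = u^{k-1} \otimes x$ (with $U(1 \otimes x) = 0$) preserve the action of every generator, so they restrict to each piece of the action filtration and descend to the quotient by $CF_w^{S^1, >-\epsilon}(H)$ for small enough $\epsilon > 0$. This yields, for each $H \in \mathcal{H}_K^{\text{Cont}}$ and each $L \in \RR_{-} \cup \{-\infty\}$, a chain-level short exact sequence
\[
0 \lr CF_w^{-, >L}(H) \lr CF_w^{S^1, -, >L}(H) \xrightarrow{U} CF_w^{S^1, -, >L}(H)[+2] \lr 0.
\]

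Next I would take the direct limit over $H \in \mathcal{H}_K^{\text{Cont}}$, which is exact, and then complete. As in the proofs of Propositions \ref{gysin} and \ref{relcl}, the direct limit $\displaystyle\varinjlim_{H} CF_w^{S^1, -, >L}(H)[+2]$ is flat over $\Lambda_{\geq 0}$ since it is a direct limit of free $\Lambda_{\geq 0}$-modules generated by nonconstant periodic orbits. Consequently $\text{Tor}_1^{\Lambda_{\geq 0}}(-, \Lambda_{\geq 0}/\Lambda_{\geq r}) = 0$ on it, so tensoring the short exact sequence with $\Lambda_{\geq 0}/\Lambda_{\geq r}$ keeps it exact for every $r > 0$. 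A Mittag-Leffler argument of the now-familiar form then lets the sequence descend to the inverse limit defining the completion. Taking homology produces the long exact sequence, which reassembles into the exact triangle \eqref{ET}.

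I expect no genuinely new obstacle: the Mittag-Leffler machinery needed to commute short exact sequences with completion has already been firmly established in the preceding proofs, so the argument amounts to bookkeeping on top of what has been done. The only substantive verification is that $U$ and the inclusion $1 \otimes (\cdot)$ descend to the negative quotient at each action level, which is transparent from their explicit formulas. If the triangle is to be promoted to $\Lambda$-coefficients, this follows immediately from the flatness of $\Lambda$ over $\Lambda_{\geq 0}$, exactly as in the conclusion of Proposition \ref{gysin}.
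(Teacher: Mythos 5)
There is a genuine gap: you have proved a different triangle from the one asserted. The exact triangle \eqref{ET} of Corollary \ref{firstgh} is not a Gysin-type triangle. Its three vertices are $H(K,\partial K;\Lambda)\otimes H(BS^1;\Lambda)$, $SH^{S^1,>L}_M(K;\Lambda)$ and $SH^{S^1,-,>L}_M(K;\Lambda)$, with the horizontal map being the map $j_L^{S^1}$ of Remark \ref{futureuse} and the degree-one map being $\delta\circ\iota_L$; this is precisely the triangle later used to reformulate $c_1^{GH}$ (Proposition \ref{firstghpro}) and to compare it with $c^{SH}$ (Lemma \ref{comm}). Its chain-level source is the constant-orbit/nonconstant-orbit decomposition, not the $u$-multiplication: for $L<-\epsilon$ one takes the short exact sequence
\begin{align*}
0 \lr CF_w^{S^1,>-\epsilon}(H) \lr CF_w^{S^1,>L}(H) \lr CF_w^{S^1,>L}(H)/CF_w^{S^1,>-\epsilon}(H) \lr 0,
\end{align*}
passes to the direct limit, tensors with $\Lambda_{\geq 0}/\Lambda_{\geq r}$ using flatness, applies Mittag-Leffler to complete, takes the homology long exact sequence, and finally identifies the first vertex via $SH^{S^1,>-\epsilon}_M(K;\Lambda)\cong H(K,\partial K;\Lambda)\otimes H(BS^1;\Lambda)$, which is the Morse-theoretic computation carried out in the proof of (c) of Proposition \ref{building}. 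That identification is the essential input and is absent from your argument.

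Your short exact sequence $0 \lr CF_w^{-,>L}(H)\lr CF_w^{S^1,-,>L}(H)\lr CF_w^{S^1,-,>L}(H)[+2]\lr 0$ built from $U$ and $x\mapsto 1\otimes x$ instead reproduces the Gysin triangle of Corollary \ref{gysinaction}(b) restricted to the negative quotient, whose vertices are $SH^{-,>L}_M(K)$ and two copies of $SH^{S^1,-,>L}_M(K)$; it never involves $H(K,\partial K;\Lambda)\otimes H(BS^1;\Lambda)$ and cannot produce the map $j_L^{S^1}$, so it cannot serve as \eqref{ET}. The completion machinery you invoke (exactness of direct limits, flatness of the orbit-generated modules, Mittag-Leffler) is indeed the right machinery and is exactly what the paper reuses, but it has to be fed the subcomplex of orbits of action greater than $-\epsilon$ inside $CF_w^{S^1,>L}(H)$, together with the computation of its homology as $H(K,\partial K;\Lambda)\otimes H(BS^1;\Lambda)$, rather than the $u$-multiplication sequence.
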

\begin{proof}
    In view of the proof of (c) of Proposition \ref{building}, we can choose small enough $\epsilon > 0$ such that $SH^{S^1,> - \epsilon}_M(K ; \Lambda) \cong H(K, \partial K; \Lambda) \otimes H(BS^1 ; \Lambda)$. Let $H \in \mathcal{H}_K^{\text{Cont}}$. We may assume that $L < -\epsilon$. Then we have a short exact sequence of chain complexes
    \begin{align*}
        0 \lr CF_w^{S^1,> -\epsilon}(H) \lr CF_w^{S^1,> L}(H) \lr CF_w^{> L}(H)/CF_w^{S^1,> -\epsilon}(H) \lr 0.
    \end{align*}
    It can be verified similarly as in the proof of (c) of Proposition \ref{building} that we can derive the desired exact triangle.
    \\
\end{proof}
\begin{remark}\label{futureuse}
    \begin{enumerate}[label=(\alph*)]
        \item For future use, we label the horizontal map of \eqref{ET} by $j_L^{S^1}$. Explicitly speaking, for each $L \in \RR_- \cup \{-\infty\}$, there exists a map
        \begin{align*}
            j^{S^1}_L : H(K, \partial K; \Lambda) \otimes H(BS^1 ; \Lambda) \lr SH^{S^1, >L}_M(K ; \Lambda).\\
        \end{align*}
        \item One can easily check that the degree 1 map in the exact triangle \eqref{ET} is $\delta \circ \iota_L$.\\
    \end{enumerate}
\end{remark}
Now we shall construct a relative version of transfer morphism. Before doing so, we recall the definition of $\mathcal{H}_{\text{stair}}$ defined in \cite{gutt} and \cite{gh}.
\begin{definition}[Gutt \cite{gutt}, Gutt and Hutchings  \cite{gh}]\label{stair} Let $(V, \omega_V)$ and $(W, \omega_W)$ be Liouville domains. Suppose that the interior of $V$ is symplectically embedded in $W$. Given small $\delta > 0$, there exists a neighborhood $U$ of $\partial V$ in $W - \text{int}(V)$ such that 
\begin{align*}
    (U, \omega_W) \cong ([0, \delta] \times \partial V, d(e^{\rho} \lambda_V)).
\end{align*}
A Hamiltonian function $H : S^1 \times \widehat{W} \to \RR$ is in $\mathcal{H}_{\text{stair}}(V, W)$ if it satisfies the following conditions:
\begin{enumerate}[label=(\alph*)]
    \item H is negative and $C^2$-small on $S^1 \times V$. Moreover, $H > - \epsilon$ on $S^1 \times V$ where $\epsilon = \frac{1}{2} \min \{\text{Spec}(\partial V, \lambda_V), \text{Spec}(\partial W, \lambda_W) \}$.\\
    \item There exists $\eta \in (0, \frac{1}{4}\delta)$ such that $H(t,p, \rho)$ is $C^2$-close to $h_1(e^{\rho})$ on $S^1  \times \partial V \times [0, \eta]$ for some strictly convex increasing function $h_1$.\\
    \item $H(t, p, \rho) = \beta e^{\rho} + \beta'$ on $S^1  \times \partial V \times [\eta, \delta - \eta]$ where $\beta > 0$, $\beta \notin \text{Spec}(\partial V, \lambda_V) \cup \text{Spec}(\partial W, \lambda_W)$ and $\beta' \in \RR$.\\
    \item $H(t,p, \rho)$ is $C^2$-close to $h_2(e^{\rho})$ on $S^1  \times \partial V \times [\delta - \eta, \delta]$ for some strictly concave increasing function $h_2$.\\
    \item $H$ is $C^2$-close to a constant function on $S^1 \times (W - (V \cup U))$.\\
    \item There exists $\eta' > 0$ such that $H(t,p, \rho)$ is $C^2$-close to $h_3(e^{\rho})$ on $S^1  \times \partial W \times [0, \eta']$ for some strictly convex increasing function $h_3$.\\   
    \item $H(t, p, \rho) = \mu e^{\rho} + \mu'$ on $S^1  \times \partial W \times [\eta', \infty]$ where $\mu > 0$, $\mu \notin \text{Spec}(\partial V, \lambda_V) \cup \text{Spec}(\partial W, \lambda_W)$ and $\mu' \in \RR$.\\ 
\end{enumerate}

\end{definition}
In \cite{gutt} and \cite{gh}, transfer morphism is constructed via Hamiltonian functions in the Definition \ref{stair}. To bulid a relative version of transfer morphism, we need to add one more step into the staircase and it can be done with the aid of the Definition \ref{stair}.
\begin{definition}
Let $(M, \omega)$ and $(M', \omega')$ be closed symplectic manifolds not necessarily symplectically aspherical and $K \subset M$ and $K' \subset M'$ be compact domains with contact type boundaries. Let $\phi : (M, \omega) \hookrightarrow (M', \omega')$ be a symplectic embedding with $\phi(\text{int}(K)) \subset K'$. We identify $(M, K)$ with its image $(\phi(M), \phi(K))$ inside $(M', K')$. We identify the neighborhood $(U, \omega)$ of $\partial K$ with $([0, \delta] \times \partial K, d(e^{\rho}\lambda_{K}))$ and the neighborhood $(U', \omega)$ of $\partial K'$ with $([0, \delta'] \times \partial K', d(e^{\rho}\lambda_{K'}))$.
\begin{enumerate}[label=(\alph*)]
    \item A Hamiltonian function $H : S^1 \times M' \to \RR$ is in $\mathcal{H}_{\text{stair}}(K, K', M')$ if it satisfies the following conditions:
    \begin{itemize}
        \item There exists $H' \in \mathcal{H}_{\text{stair}}(K, K')$ such that $H|_{S^1 \times K'} = H'|_{S^1 \times K'}$.\\
        \item There exists $\eta' \in (0, \frac{1}{4}\delta')$ such that $H(t, p, \rho)$ is $C^2$-close to $h_1(e^{\rho})$ on $S^1 \times \partial K' \times [0, \eta']$ for some strictly increasing and convex function $h_1$.\\
        \item $H(t,p, \rho) = \beta e^{\rho} + \beta'$ on $S^1 \times \partial K' \times [\eta', \delta' - \eta']$ where $\beta > 0$, $\beta \notin \text{Spec}(\partial K, \lambda_K) \cup \text{Spec}(\partial K', \lambda_K')$ and $\beta' \in \RR$.\\
        \item $H(t, p, \rho)$ is $C^2$-close to $h_2(e^{\rho})$ on $S^1 \times \partial K' \times [\delta' - \eta', \delta']$ for some strictly increasing and convex function $h_2$.\\
        \item $H$ is $C^2$-close to a constant function on $S^1 \times (M - U')$.\\
    \end{itemize}
    \item For $H \in \mathcal{H}_{\text{stair}}(K, K', M')$, define $H^K : S^1 \times M \to \RR$ as follows:
    \begin{itemize}
        \item $H^K = H$ on $S^1 \times K'$.\\
        \item $H^K$ is $C^2$-close to a constant function on $S^1 \times (M - K')$.\\
    \end{itemize}
    \end{enumerate}
\end{definition}
Now let's get back to our assumption that $(M, \omega)$ and $(M', \omega')$ be closed symplectically aspherical symplectic manifolds and $K \subset M$ and $K' \subset M'$ be compact domains with contact type and index-bounded boundaries. Let $H \in \mathcal{H}_{\text{stair}}(K, K', M')$. Then due to Remark \ref{homkill}, one can observe that
\begin{align}\label{same}
    \widehat{\varinjlim_{H \in \mathcal{H}_{\text{stair}}(K, K', M')}} CF^{S^1}_w(H) = \widehat{\varinjlim_{H \in \mathcal{H}_{\text{stair}}(K, K', M')}} CF^{S^1}_w(H^K) 
\end{align}
and
\begin{align}\label{same2}
    \widehat{\varinjlim_{H \in \mathcal{H}_{\text{stair}}(K, K', M')}} CF^{S^1, >-\epsilon}_w(H) = \widehat{\varinjlim_{H \in \mathcal{H}_{\text{stair}}(K, K', M')}} CF^{S^1, >-\epsilon}_w(H^K). 
\end{align}
Suppose that $H' \in \mathcal{H}^{\text{Cont}}_{K'}$ satisfies $H' \leq H$. Then we have a (weighted) continuation map
\begin{align*}
    CF^{S^1}_w(H') \longrightarrow CF^{S^1}_w(H).
\end{align*}
Then, after taking direct limit and completion, we have 
\begin{align}\label{transfercomp}
    \widehat{\varinjlim_{H' \in \mathcal{H}^{\text{Cont}}_{K'}}} CF^{S^1}_w(H') \lr \widehat{\varinjlim_{H \in \mathcal{H}_{\text{stair}}(K, K', M')}}CF^{S^1}_w(H).
\end{align}
For more detailed exposition of the construction of the map \eqref{transfercomp}, see the proof of (a) of Proposition \ref{building}. By \eqref{same}, the map \eqref{transfercomp} can be rewritten as
\begin{align*}
    \widehat{\varinjlim_{H' \in \mathcal{H}^{\text{Cont}}_{K'}}} CF^{S^1}_w(H') \lr \widehat{\varinjlim_{H \in \mathcal{H}_{\text{stair}}(K, K', M')}} CF^{S^1}_w(H^K).
\end{align*}
Finally, on homology level, we obtain the following map.
\begin{align}\label{map1}
    SH^{S^1}_{M'}(K') = H \left( \widehat{\varinjlim_{H' \in \mathcal{H}^{\text{Cont}}_{K'}}} CF^{S^1}_w(H')\right) \lr H \left(\widehat{\varinjlim_{H \in \mathcal{H}_{\text{stair}}(K, K', M')}}CF^{S^1}_w(H^K) \right) = SH^{S^1}_{M'}(K).
\end{align}
The verification of the last equality in \eqref{map1} can be found in \cite{bo}. It remains to construct a map from $SH_{M'}^{S^1}(K)$ to $SH_{M}^{S^1}(K)$. For $H \in \mathcal{H}^{\text{Cont}}_K$, let $CF_w^{S^1}(H, M')$ be the weighted $S^1$-equivariant Floer complex of $H$ thinking that every Hamiltonian orbit and every Floer trajectory live in $M'$. We can define $CF_w^{S^1}(H, M)$ in the same fashion. There is a projection map
\begin{align}\label{smallbig}
    CF_w^{S^1}(H, M') \longrightarrow CF_w^{S^1}(H, M).
\end{align}
After taking direct limit and completion, we have a map of cochain complexes
\begin{align}\label{embedding}
    \widehat{\varinjlim_{H \in \mathcal{H}^{\text{Cont}}_K}} CF_w^{S^1}(H, M') \longrightarrow
 \widehat{\varinjlim_{H \in \mathcal{H}^{\text{Cont}}_K}} CF_w^{S^1}(H, M).
\end{align}
Therefore, the map \eqref{embedding} induces a map on homology
\begin{align}\label{map2}
    H\left(\widehat{\varinjlim_{H \in \mathcal{H}^{\text{Cont}}_K}} CF_w^{S^1}(H, M')\right)= SH_{M'}^{S^1}(K) \longrightarrow SH_{M}^{S^1}(K) = H\left( \widehat{\varinjlim_{H \in \mathcal{H}^{\text{Cont}}_K}} CF_w^{S^1}(H, M) \right).
\end{align}
Composing maps \eqref{map1} and \eqref{map2}, we finally have a transfer morphism
\begin{align}\label{transfer}
    SH_{M'}^{S^1}(K') \longrightarrow SH_{M}^{S^1}(K).
\end{align}
Note that if $M = M'$, then the map \ref{transfer} is a restriction map $r^{K'}_{K}$ introduced in \cite{v}. Looking back the construction of the map \eqref{transfer}, every linking map increases or preserves the action and hence we also have maps
\begin{align*}
     \Phi: SH_{M'}^{S^1,-}(K') \longrightarrow SH_{M}^{S^1,-}(K) 
\end{align*}
and
\begin{align*}
    \Phi^L : SH_{M'}^{S^1,-,>L}(K') \longrightarrow SH_{M}^{S^1,-,>L}(K)
\end{align*}
for each $L \in \RR_{-} \cup \{-\infty\}$. We can extend these maps to cohomology with coefficients in $\Lambda$ and we still denote their extensions by $\Phi$ and $\Phi^L$, respectively.

\begin{proposition}\label{building2}
     Let $(M, \omega)$ and $(M', \omega')$ be closed symplectically aspherical symplectic manifolds and $K \subset M$ and $K' \subset M'$ be compact domains with contact type and index-bounded boundaries. Let $\phi : (M, \omega) \hookrightarrow (M', \omega')$ be a symplectic embedding with $\text{int}\,(\phi(K)) \subset K'$. Then the maps $\Phi$ and $\Phi^L$ have the following properties:
     \begin{enumerate}[label=(\alph*)]
        \item  For each $L \in \RR_{-} \cup \{-\infty\}$, there exists a map $\Phi^L : SH^{S^1,-,>L}_{M'}(K') \to \shn$. Also, if $L_1, L_2 \in \RR_{-} \cup \{-\infty\}$ and $L_1 < L_2$, then $\Phi^{L_1} \circ \iota_{L_1, L_2} = \iota_{L_1, L_2} \circ \Phi^{L_2}$ and $\displaystyle\Phi = \varinjlim_{L \to -\infty} \Phi^L$. 
        \begin{center}
          \begin{tikzcd}
   SH^{S^1,-,>L_2}_{M'}(K') \arrow{r}{\Phi^{L_2}} \arrow{d}{\iota_{L_1, L_2}} &SH^{S^1,-,>L_2}_M(K) \arrow{d}{\iota_{L_1, L_2}}\\
   SH^{S^1,-,>L_1}_{M'}(K') \arrow{r}{\Phi^{L_1}} &SH^{S^1,-,>L_1}_M(K)\\
\end{tikzcd}

        \end{center}
        \item For each $L \in \RR_{-} \cup \{-\infty\}$, $\iota_L \circ \Phi^L = \Phi \circ \iota_L$.\\
        \begin{center}
            \begin{tikzcd}
    SH^{S^1, -, >L}_{M'}(K') \arrow{r}{\Phi^L} \arrow{d}{\iota_L} &SH^{S^1, -, >L}_M(K) \arrow{d}{\iota_L}\\
    SH^{S^1, -}_{M'}(K') \arrow{r}{\Phi}&SH^{S^1, -}_M(K)\\
\end{tikzcd}
        \end{center}
        \item  For each $L \in \RR_{-} \cup \{-\infty\}$, $U_L \circ \Phi^L = \Phi^L \circ U_L$.\\
        \begin{center}
            \begin{tikzcd}
    SH^{S^1, -, >L}_{M'}(K') \arrow{r}{\Phi^L} \arrow{d}{U_L} &SH^{S^1, -, >L}_M(K) \arrow{d}{U_L}\\
    SH^{S^1, -, >L}_{M'}(K') \arrow{r}{\Phi^L}&SH^{S^1, -, >L}_M(K)\\
\end{tikzcd}
\end{center}
        \item Let $\rho : H(K', \partial K' ; \Lambda) \to H(K, \partial K; \Lambda)$ be the map induced by restriction. Then $\delta \circ \Phi = (\rho \otimes \text{id}) \circ \delta$.  
    \begin{center}
         \end{center}
        \end{enumerate}
    \end{proposition}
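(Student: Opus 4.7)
The plan is to observe that $\Phi$ and $\Phi^L$ are both built from the same two chain-level ingredients used in the construction preceding the statement: a continuation map
$$\widehat{\varinjlim_{H' \in \mathcal{H}^{\text{Cont}}_{K'}}} CF^{S^1}_w(H') \longrightarrow \widehat{\varinjlim_{H \in \mathcal{H}_{\text{stair}}(K,K',M')}} CF^{S^1}_w(H^K),$$
and the projection $CF^{S^1}_w(H,M') \to CF^{S^1}_w(H,M)$. The four statements will all follow from three features of these ingredients that one checks at the chain level: they (i) increase the action and therefore preserve each filtration $CF^{S^1,>L}_w$; (ii) commute with the $u$-shift operator $U$; and (iii) respect the quotient by $CF^{S^1,>-\epsilon}_w$, so they descend to the negative versions.

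Parts (a) and (b) are immediate from (i) and (iii). Since the chain-level ingredients of $\Phi$ preserve each action-filtered subcomplex and restrict to the quotient $CF^{S^1,-}_w$, we obtain $\Phi^L$ by the same direct-limit-then-complete-then-take-homology procedure used to define $\Phi$. The relation $\Phi^{L_1} \circ \iota_{L_1,L_2} = \iota_{L_1,L_2} \circ \Phi^{L_2}$ follows because $\iota_{L_1,L_2}$ is induced by a chain-level inclusion, and the chain-level transfer commutes with that inclusion. The same reasoning gives $\iota_L \circ \Phi^L = \Phi \circ \iota_L$, and the identification $\Phi = \varinjlim_{L \to -\infty} \Phi^L$ is a consequence of the fact that filtered colimits commute with homology (and with completion, after tensoring with $\Lambda_{\geq 0}/\Lambda_{\geq r}$, by the Mittag-Leffler argument already used in the proof of Proposition \ref{building}(a)).

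Part (c) is a chain-level computation. In the Bourgeois--Oancea formalism, the $S^1$-equivariant continuation map is defined by counting parametric Floer cylinders over $S^{2N+1}$ and has the form $\sum_{i \geq 0} u^{k-i} \otimes c_i$ for suitable operators $c_i : CF_w(H') \to CF_w(H)$; the operator $U$ acts only on the $\Lambda_{\geq 0}[u]$ factor by $u^k \mapsto u^{k-1}$. A direct matching-of-coefficients computation shows $U \circ c^{S^1} = c^{S^1} \circ U$. The projection $CF^{S^1}_w(H,M') \to CF^{S^1}_w(H,M)$ is $\Lambda_{\geq 0}[u]$-linear by construction, hence commutes with $U$ trivially. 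Passing to direct limits, completions, and homology preserves the identity $U_L \circ \Phi^L = \Phi^L \circ U_L$.

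The main obstacle is part (d), for which the plan is as follows. By (iii), the transfer morphism extends to a morphism of short exact sequences
$$0 \longrightarrow CF^{S^1,>-\epsilon}_w \longrightarrow CF^{S^1}_w \longrightarrow CF^{S^1,-}_w \longrightarrow 0$$
for $(M',K')$ and $(M,K)$; naturality of the connecting homomorphism then gives a commutative square of $\delta$ with the induced map $\Phi^{>-\epsilon}$ on $SH^{S^1,>-\epsilon}$. It remains to identify $\Phi^{>-\epsilon}$ with $\rho \otimes \mathrm{id}$ under the isomorphism $SH^{S^1,>-\epsilon}_M(K;\Lambda) \cong H(K,\partial K;\Lambda) \otimes H(BS^1;\Lambda)$ established in Proposition \ref{building}(c). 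In the low-action regime, the generators of $CF^{S^1,>-\epsilon}_w(H)$ for $H \in \mathcal{H}^{\text{Cont}}_{K'}$ (resp.\ $H \in \mathcal{H}_{\text{stair}}(K,K',M')$) are $u^k \otimes x$ with $x$ a critical point of $-H|_{K'}$ (resp.\ $-H|_K$ after the projection); both continuation and projection act as the identity on the $\Lambda_{\geq 0}[u]$-factor and, on the Floer/Morse factor, implement the topological inclusion $K \hookrightarrow K'$ at the level of critical points. Via the chain of isomorphisms $HF^{>-\epsilon}_w \cong HM_w(-H|_{\cdot}) \cong H(\cdot, \partial\cdot;\Lambda)$ from the proof of Proposition \ref{building}(c), this induced map is precisely $\rho$ tensored with the identity, completing the verification.
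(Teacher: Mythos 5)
Your proposal is correct and follows essentially the same route as the paper: parts (a)--(c) are read off from the chain-level construction of $\Phi$ (action-increasing, filtration- and quotient-preserving, and commuting with the $u$-shift), and part (d) is proved via a morphism of the short exact sequences defining $SH^{S^1,>-\epsilon}$, $SH^{S^1}$, $SH^{S^1,-}$ together with naturality of the resulting long exact sequences and the identification of the induced map on $SH^{S^1,>-\epsilon}\cong H(\cdot,\partial\cdot;\Lambda)\otimes H(BS^1;\Lambda)$ with $\rho\otimes\mathrm{id}$. Your explicit verification of that last identification (constant orbits mapping by the Morse-theoretic restriction) is a detail the paper leaves implicit, but it is consistent with its argument.
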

    \begin{proof}
    (a), (b), (c) Clear from the construction of $\Phi$ and $\Phi^L$. For details, see Chapter 8 of \cite{gh}.\\
    (d) Let $H \in \mathcal{H}_{\text{stair}}(K,K',M')$ and $H' \in \mathcal{H}_{K'}^{\text{Cont}}$. Suppose that $H' \leq H$. Consider the following commutative diagram
    \begin{center}
        \begin{tikzcd}
0 \arrow{r} & CF_w^{S^1, >-\epsilon}(H', M') \arrow{r}\arrow{d} &CF_w^{S^1}(H', M') \arrow{r}\arrow{d} &CF_w^{S^1,-}(H', M') \arrow{r}\arrow{d} &0\\
0 \arrow{r} &CF_w^{S^1, >-\epsilon}(H, M') \arrow{r}\arrow{d} &CF_w^{S^1}(H, M') \arrow{r}\arrow{d} &CF_w^{S^1,-}(H, M') \arrow{r}\arrow{d} &0\\
0 \arrow{r} &CF_w^{S^1, >-\epsilon}(H, M) \arrow{r} &CF_w^{S^1}(H, M)\arrow{r} &CF_w^{S^1,-}(H, M) \arrow{r} &0
\end{tikzcd}
     \end{center}
    where each row is exact and vertical maps are those in the construction of $\Phi$. More precisely, the first vertical maps are continuation maps and the sencond vertical maps are projection maps. By the same reason presented in the proof of (c) Proposition \ref{building}, each row remains exact after taking $\displaystyle\widehat{\varinjlim_{H' \in \mathcal{H}_{K'}^{\text{Cont}}}}$ in the first row and taking $\displaystyle\widehat{\varinjlim_{H \in \mathcal{H}_{\text{stair}(K, K', M')}}}$ in the second and third rows. So, we have a commutative diagram of exact sequences of chain complexes as below. 
     \begin{center}
        \begin{tikzcd}
0 \arrow{r} & \displaystyle\widehat{\varinjlim_{H'}}CF_w^{S^1, >-\epsilon}(H', M') \arrow{r}\arrow{d} &\displaystyle\widehat{\varinjlim_{H'}}CF_w^{S^1}(H', M') \arrow{r}\arrow{d} &\displaystyle\widehat{\varinjlim_{H'}}CF_w^{S^1,-}(H', M') \arrow{r}\arrow{d} &0\\
0 \arrow{r} &\displaystyle\widehat{\varinjlim_{H}}CF_w^{S^1, >-\epsilon}(H, M') \arrow{r}\arrow{d} &\displaystyle\widehat{\varinjlim_{H}}CF_w^{S^1}(H, M') \arrow{r}\arrow{d} &\displaystyle\widehat{\varinjlim_{H}}CF_w^{S^1,-}(H, M') \arrow{r}\arrow{d} &0\\
0 \arrow{r} &\displaystyle\widehat{\varinjlim_{H}}CF_w^{S^1, >-\epsilon}(H, M) \arrow{r} &\displaystyle\widehat{\varinjlim_{H}}CF_w^{S^1}(H, M)\arrow{r} &\displaystyle\widehat{\varinjlim_{H}}CF_w^{S^1,-}(H, M) \arrow{r} &0
\end{tikzcd}
    \end{center}
    Note that in the second and third row of the diagram above, we can replace $H$ by $H^K$ due to \eqref{same} and \eqref{same2} and each row remains exact after tensoring $\Lambda$ over $\Lambda_{\geq 0}$. Recall from the proof of that (c) of Proposition \ref{building} that
    \begin{align*}
       H \left(\displaystyle\widehat{\varinjlim_{H'}}CF_w^{S^1, >-\epsilon}(H', M') \ton \Lambda\right) = H(K', \partial K';\Lambda )\otimes H(BS^1 ; \Lambda)
    \end{align*}
    and 
    \begin{align*}
        H \left( \displaystyle\widehat{\varinjlim_{H}}CF_w^{S^1, >-\epsilon}(H^K, M) \ton \Lambda \right) =  H(K, \partial K;\Lambda )\otimes H(BS^1; \Lambda).
    \end{align*}
     The long exact sequences of the first row and the third row (after tensoring with $\Lambda$ over $\Lambda_{\geq 0}$) yield a commutative diamgram as follows:
    \begin{center}
        \begin{tikzcd}
    SH_{M'}^{S^1,-}(K', \Lambda) \arrow{r}{\delta} \arrow{d}{\Phi} & H \left(\displaystyle\widehat{\varinjlim_{H}}CF_w^{S^1, >-\epsilon}(H, M')\right) = H(K', \partial K',\Lambda )\otimes H(BS^1, \Lambda) \arrow{d}{\rho \otimes \text{id}}\\
    SH_{M'}^{S^1,-}(K, \Lambda) \arrow{r}{\delta} & H \left( \displaystyle\widehat{\varinjlim_{H}}CF_w^{S^1, >-\epsilon}(H^K, M) \right) =  H(K, \partial K,\Lambda )\otimes H(BS^1, \Lambda).
\end{tikzcd}
    \end{center}
    \end{proof}
    \begin{remark}\label{nons1}
    Notice that the statements (a), (b) and (d) in Proposition \ref{building} and the statements (a) and (b) in Proposition \ref{building2} deal with cohomologies with coefficients in $\Lambda_{\geq 0}$. But they remain true even though we replace their coefficients by $\Lambda$. Moreover, one can observe that every statement in Proposition \ref{building}, Corollary \ref{firstgh} and Proposition \ref{building2} still holds in the non $S^1$-equivariant case, that is, we can drop the superscript $S^1$ in all cases. We will use the same notation such as $\iota_L, \iota_{L_1, L_2}, \delta, \Phi$ and so on for the non $S^1$-equivariant case and this notation will be clear from the context. For example, for the non $S^1$-equivariant case, the map $\delta$ defined in Proposition \ref{building} should be thought of as
    \begin{align*}
        \delta : SH^{-}_M(K; \Lambda) \lr H(H, \partial K; \Lambda)
    \end{align*}
    and hence equation in (d) of Proposition \ref{building2} might as well be transformed into $\delta \circ \Phi = \rho \circ \delta$. One map that will be frequently used is the non $S^1$-equivariant version of $j^{S^1}_L$ introduced in Remark \ref{futureuse} and we denote this map by $j_L$ dropping the superscript $S^1$. Analogous proof of Corollary \ref{firstgh} can be used to construct an exact triangle 
       \begin{align}\label{triforsh}
            \includegraphics[scale=1.2]{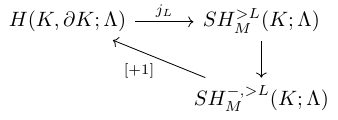}
        \end{align}
        and a map 
        \begin{align*}
            j_L : H(K, \partial K ; \Lambda) \lr SH^{>L}_M(K ; \Lambda).
        \end{align*}
  The degree 1 map in \eqref{triforsh} is also $\delta \circ \iota_L$.       \\
    
\end{remark}
\subsection{Relative Gutt-Hutchings capacities}
    In this subsection, we define a relative version of Gutt-Hutchings capacities. One more thing to note is that $[\omega^n|_K]$ defines a fundamental class of $H^*(K, \partial K; \Lambda)$. Here, $n = \frac{1}{2} \dim M$. We have set up all the background that we need to define the relative Gutt-Hutchings capacities. 
    \begin{definition}
         For each $k \in \NN$, define the $k$-th relative Gutt-Hutchings capacity $c_k^{GH}(M,K)$ of $K$ in $M$  by
\begin{align*}
    c_k^{GH}(M,K) = - \sup\left\{L < 0 \bigmid \begin{array}{cc}
         & \delta U^{k-1} \iota_L(\alpha) = [\omega^n|_K] \otimes [\text{pt}] \in H(K, \partial K ; \Lambda) \otimes H(BS^1 ; \Lambda) \\
         & \,\,\text{for some}\,\, \alpha \in SH^{S^1, -, >L}_M(K ; \Lambda)
    \end{array}
    \right\}.\\
\end{align*}
    \end{definition}
\begin{proposition}\label{ghc} $c^{GH}_k(K, M)$ of $K$ in $M$ has the following properties.
    \begin{enumerate}[label=(\alph*)]
        \item For $r>0$, $c_k^{GH}(M,r \omega, K) = rc_k^{GH}(M, \omega, K)$.\\
        \item $c_k^{GH}(M,K) \leq c_{k+1}^{GH}(M,K)$ for all $k \geq 1$.\\
        \item If there exists a symplectic embedding $\phi : (M, \omega) \hookrightarrow (M', \omega')$ with $\text{int} \,(\phi(K)) \subset K'$, then $c_k^{GH}(M, K) \leq c_k^{GH}(M', K')$ for all $k \geq 1$. \\
    \end{enumerate}
\end{proposition}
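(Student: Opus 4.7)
The three parts of Proposition \ref{ghc} all follow formally from manipulation of the structural maps and commutation relations recorded in Propositions \ref{building} and \ref{building2}. I would first rephrase the defining condition as
\[
c_k^{GH}(M,K) = -\sup S_k(M,K),\quad S_k(M,K) := \left\{L < 0 \,\bigm|\, [\omega^n|_K] \otimes [\mathrm{pt}] \in \operatorname{Im}\bigl(\delta\, U^{k-1}\, \iota_L\bigr)\right\},
\]
and then prove in each part the relevant scaling or containment of these sets.

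For (a), I would invoke Proposition \ref{building}(d), which gives an isomorphism $SH^{S^1,-,>L}_{(M,\omega)}(K;\Lambda)\cong SH^{S^1,-,>rL}_{(M,r\omega)}(K;\Lambda)$ from a chain-level identification under which all actions rescale by $r$. Because $U_L$, $\iota_L$, and $\delta$ are built entirely from chain-level data preserved by this identification, the isomorphism intertwines all three maps. The fundamental class rescales as $[(r\omega)^n|_K] = r^n[\omega^n|_K]$, but $r^n\in\Lambda^{\times}$ is absorbed by rescaling $\alpha$ since everything is $\Lambda$-linear. Thus $L\in S_k(M,\omega,K) \Longleftrightarrow rL\in S_k(M,r\omega,K)$, and taking suprema yields the conformality.

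For (b), I would show $S_{k+1}(M,K)\subseteq S_k(M,K)$. Given $\alpha\in SH^{S^1,-,>L}_M(K;\Lambda)$ witnessing $L\in S_{k+1}(M,K)$, set $\beta := U_L(\alpha)$, which still lies in $SH^{S^1,-,>L}_M(K;\Lambda)$ since $U_L$ respects the action filtration. The commutation $U\circ\iota_L = \iota_L\circ U_L$ from Proposition \ref{building}(b) gives
\[
\delta U^{k-1}\iota_L(\beta) = \delta U^{k-1}\iota_L U_L(\alpha) = \delta U^k\iota_L(\alpha) = [\omega^n|_K]\otimes[\mathrm{pt}],
\]
so $L\in S_k(M,K)$, proving $c_k^{GH}(M,K) \leq c_{k+1}^{GH}(M,K)$.

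For (c), I would transport witnesses via the transfer $\Phi^L$ to prove $S_k(M',K')\subseteq S_k(M,K)$, which yields $c_k^{GH}(M,K) \leq c_k^{GH}(M',K')$. Given $\alpha'$ witnessing $L\in S_k(M',K')$, set $\alpha := \Phi^L(\alpha')$. The commutations $\iota_L\Phi^L = \Phi\iota_L$, $U\Phi = \Phi U$, and $\delta\Phi = (\rho\otimes\mathrm{id})\delta$ from parts (b)--(d) of Proposition \ref{building2} combine to give
\[
\delta U^{k-1}\iota_L(\alpha) = (\rho\otimes\mathrm{id})\bigl([\omega'^n|_{K'}]\otimes[\mathrm{pt}]\bigr) = \rho([\omega'^n|_{K'}])\otimes[\mathrm{pt}].
\]
Since $\phi^*\omega' = \omega$, tracking the Morse-theoretic identification of the $>-\epsilon$ complex used in the proof of Proposition \ref{building}(c) shows that $\rho$ sends the fundamental class of $(K',\partial K')$ to that of $(K,\partial K)$, so $\rho([\omega'^n|_{K'}]) = [\omega^n|_K]$ and $L\in S_k(M,K)$. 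The one nontrivial point I expect to pin down carefully is precisely this identification of $\rho$ on top classes, since $\rho$ is defined via a chain-level Floer construction rather than a direct topological pullback; the remainder of the argument is formal diagram chasing with the commutation relations already assembled.
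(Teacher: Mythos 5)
Your proposal is correct and follows essentially the same route as the paper: part (a) via the rescaling isomorphism of Proposition \ref{building}(d), part (b) by applying $U_L$ to the witness and using the commutation $U\circ\iota_L=\iota_L\circ U_L$, and part (c) by transporting the witness through $\Phi^L$ and chasing the commutation relations of Proposition \ref{building2}. The two points you flag for extra care --- absorbing the factor $r^n$ relating $[(r\omega)^n|_K]$ to $[\omega^n|_K]$, and checking that $\rho$ carries the top class of $(K',\partial K')$ to that of $(K,\partial K)$ --- are in fact passed over silently in the paper's proof, so your treatment is if anything slightly more careful.
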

\begin{proof}
(a) \begin{align*}
c_k^{GH}(M, r\omega, K)& = -\sup\left\{ L < 0 \bigmid  \begin{array}{cc}
     &   \delta U^{k-1} \iota_L(\alpha) =[\omega^n|_K]\otimes [\text{pt}] \in H(K, \partial K ; \Lambda) \otimes H(BS^1 ; \Lambda) \\
     & \,\,\text{for some}\,\, \alpha \in SH^{S^1, - ,>L}_{(M, r \omega)}(K)
\end{array} \right\}\\
&\overset{(*)}{=} - \sup\left\{ L< 0 \bigmid  \begin{array}{cc}
     &   \delta U^{k-1} \iota_{\frac{L}{r}}(\alpha) =[\omega^n|_K]\otimes [\text{pt}] \in H(K, \partial K ; \Lambda) \otimes H(BS^1 ; \Lambda) \\
     & \,\,\text{for some}\,\, \alpha \in SH^{S^1, - ,>\frac{L}{r}}_{(M,  \omega)}(K)
\end{array} \right\}  \\
&=-\sup\left\{ r L < 0 \bigmid  \begin{array}{cc}
     &   \delta U^{k-1} \iota_L(\alpha) =[\omega^n|_K]\otimes [\text{pt}] \in H(K, \partial K ; \Lambda) \otimes H(BS^1 ; \Lambda) \\
     & \,\,\text{for some}\,\, \alpha \in SH^{S^1, - ,>L}_{(M,  \omega)}(K)
\end{array} \right\}\\
&= -r \sup\left\{  L < 0 \bigmid  \begin{array}{cc}
     &   \delta U^{k-1} \iota_L(\alpha) =[\omega^n|_K]\otimes [\text{pt}] \in H(K, \partial K ; \Lambda) \otimes H(BS^1 ; \Lambda) \\
     & \,\,\text{for some}\,\, \alpha \in SH^{S^1, - ,>L}_{(M,  \omega)}(K)
\end{array} \right\}\\
&= r c_k^{GH}(M, \omega, K).
\end{align*}
Note that the second equality labelled by $(*)$ follows from (d) of Proposition \ref{building}.\\
(b) Let $\alpha \in SH^{S^1, -, >L}_M(K ; \Lambda)$ with $\delta U^{k} \iota_L(\alpha) = [\omega^n|_K] \otimes [\text{pt}]$. Let $\beta = U_L (\alpha) \in SH^{S^1, -, >L}_M(K ; \Lambda)$. Then
\begin{align*}
    \delta U^{k-1} \iota_L(\beta)& = \delta U^{k-1} \iota_L(U_L(\alpha))  \\& = \delta U^k \iota_L(\alpha) &&\text{(b) of Proposition \ref{building}} \\& = [\omega^n|_K] \otimes [\text{pt}].
\end{align*}
(c) Let $\alpha' \in SH^{S^1, -, >L}_{M'}(K' ; \Lambda)$ with $\delta U^{k-1} \iota_L(\alpha') = [\omega^n|_{K'}] \otimes [\text{pt}]$. Let $\alpha = \Phi^L(\alpha')$. Then
\begin{align*}
    \delta U^{k-1} \iota_L (\alpha)& = \delta U^{k-1} \iota_L (\Phi^L(\alpha'))\\& = \delta \iota_L U^{k-1}_L \Phi^L(\alpha') &&\text{(a) of Proposition \ref{building2}} \\& = \delta \iota_L \Phi^L  U^{k-1}_L (\alpha') &&\text{(b) of Proposition \ref{building2}} \\&= \delta \Phi \iota_L U^{k-1}_L(\alpha') &&\text{(a) of Proposition \ref{building2}} \\&= (\rho \otimes \text{id}) \delta U^{k-1} \iota_L(\alpha') &&\text{(c) of Proposition \ref{building2}}\\&= (\rho \otimes \text{id}) ([\omega^n|_{K'}]\otimes [\text{pt}]) \\&= [\omega^n|_K] \otimes [\text{pt}].
\end{align*}
\end{proof}

\begin{proposition}\label{disjoint}
    Let $K_1, K_2 \subset M$ be disjoint compact domains with contact type and index-bounded boundaries. Then
    \begin{align*}
        c_k^{GH}(M, K_1 \cup K_2) = \max\left\{c_k^{GH}(M, K_1), c_k^{GH}(M, K_2)\right\}
    \end{align*}
    for each $k = 1,2,3,\cdots$. 
\end{proposition}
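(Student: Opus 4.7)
The proof will establish the equality as two separate inequalities. The direction
\[ c_k^{GH}(M, K_1 \cup K_2) \geq \max\{c_k^{GH}(M, K_1), c_k^{GH}(M, K_2)\} \]
is immediate from Proposition~\ref{ghc}(c) applied to the identity embedding $(M, \omega) \hookrightarrow (M, \omega)$, which sends each $K_i$ into $K_1 \cup K_2$ and therefore gives $c_k^{GH}(M, K_i) \leq c_k^{GH}(M, K_1 \cup K_2)$ for $i=1,2$.

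For the reverse inequality, the central step is to establish, for each $L \in \RR_{-} \cup \{-\infty\}$, a natural direct-sum decomposition
\[ SH^{S^1, -, >L}_M(K_1 \cup K_2; \Lambda) \;\cong\; SH^{S^1, -, >L}_M(K_1; \Lambda) \oplus SH^{S^1, -, >L}_M(K_2; \Lambda) \]
compatible with the structural maps $\iota_L$, $U_L$ and $\delta$, and such that under the corresponding decomposition $H(K_1 \cup K_2, \partial(K_1 \cup K_2); \Lambda) \cong H(K_1, \partial K_1; \Lambda) \oplus H(K_2, \partial K_2; \Lambda)$ the fundamental class $[\omega^n|_{K_1 \cup K_2}]$ is identified with $([\omega^n|_{K_1}], [\omega^n|_{K_2}])$. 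The plan to obtain this decomposition is to work directly at the chain level: using disjointness of $K_1$ and $K_2$, one selects a cofinal sequence $\{H_n\} \subset \mathcal{H}_{K_1 \cup K_2}^{\text{Cont}}$ whose restriction to a neighborhood of each $K_i$ agrees with a chosen cofinal sequence $\{H_n^i\} \subset \mathcal{H}_{K_i}^{\text{Cont}}$, and which is $C^2$-close to a (large) constant on the region of $M$ separating $K_1$ from $K_2$. For such Hamiltonians the nonconstant lower 1-periodic orbits of $H_n$ partition into those lying near $K_1$ and those lying near $K_2$, and an integrated maximum principle applied on the flat separating region forbids any Floer cylinder from connecting lower orbits of different types. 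This gives a chain-level splitting of $CF^{S^1,-,>L}_w(H_n)$ as a direct sum of subcomplexes, which is preserved by the continuation maps, by the direct limit, and by the Novikov completion, and hence descends to the desired decomposition on cohomology. An alternative route to the unfiltered version is to invoke the $S^1$-equivariant Mayer--Vietoris property of Proposition~\ref{mvs1} with the trivially Poisson-commuting pair $(K_1, K_2)$, for which $K_1 \cap K_2 = \emptyset$.

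Granted the decomposition, the reverse inequality follows by a short algebraic argument. Writing $c_i := c_k^{GH}(M, K_i)$ and assuming first that both $c_i$ are finite, for every $L < -\max\{c_1, c_2\}$ the monotonicity of the defining condition in $L$ (via Proposition~\ref{building}(a)) produces witnessing classes $\alpha_i \in SH^{S^1,-,>L}_M(K_i; \Lambda)$ with $\delta U^{k-1} \iota_L(\alpha_i) = [\omega^n|_{K_i}] \otimes [\mathrm{pt}]$ for $i=1,2$. The class $\alpha := \alpha_1 \oplus \alpha_2 \in SH^{S^1,-,>L}_M(K_1 \cup K_2; \Lambda)$ then satisfies $\delta U^{k-1} \iota_L(\alpha) = [\omega^n|_{K_1 \cup K_2}] \otimes [\mathrm{pt}]$, which forces $c_k^{GH}(M, K_1 \cup K_2) \leq \max\{c_1, c_2\}$. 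The case where some $c_i = +\infty$ is trivial.

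The main obstacle is the first step. Although the chain-level maximum principle argument is essentially standard, one must verify that the splitting is preserved through the whole pipeline of continuation maps, direct limits, Novikov completion, and action filtration, and in particular that it is respected by the operator $U_L$ and by the connecting map $\delta$ coming from the exact triangle of Corollary~\ref{firstgh}, and that the fundamental class $[\omega^n|_{K_1 \cup K_2}]$ indeed decomposes as $([\omega^n|_{K_1}], [\omega^n|_{K_2}])$ under the identification above.
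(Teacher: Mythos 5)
Your overall strategy is essentially the paper's. The heart of both arguments is the observation that, since $K_1$ and $K_2$ are disjoint, $SH^{S^1,-,>L}_M(K_1\cup K_2;\Lambda)$ splits as $SH^{S^1,-,>L}_M(K_1;\Lambda)\oplus SH^{S^1,-,>L}_M(K_2;\Lambda)$ compatibly with $\iota_L$, $U_L$, $\delta$, and with $[\omega^n|_{K_1\cup K_2}]\otimes[\text{pt}]=[\omega^n|_{K_1}]\otimes[\text{pt}]+[\omega^n|_{K_2}]\otimes[\text{pt}]$; granted this, your ``push both witnesses to a common level $L$ and add them'' step is exactly the paper's proof that $c_k^{GH}(M,K_1\cup K_2)\leq\max\{c_k^{GH}(M,K_1),c_k^{GH}(M,K_2)\}$. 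For the reverse inequality you invoke monotonicity (Proposition \ref{ghc}(c) with the identity embedding), whereas the paper instead decomposes a witness $\alpha=\alpha_1+\alpha_2$ for the union and reads off witnesses for each piece; both work, and your variant has the mild advantage of needing only the inclusions of the summands rather than the projections.

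The one genuinely weak point is your justification of the splitting. On a \emph{closed} ambient manifold the integrated maximum principle does not apply across the flat separating region: the Stokes-type argument requires the crossing part of the cylinder to remain in a region where $\omega$ is exact and $H$, $J$ are of contact type, and a cylinder that climbs over the plateau and descends into the collar of the other component leaves that region. Nor is a crossing differential trajectory killed by completion, since its topological energy is just an action difference and need not grow along the cofinal family. What actually rules out such trajectories is the confinement result of Ganor--Tanny \cite{gt} for symplectically aspherical $M$ and index-bounded boundaries, already invoked in Remark \ref{kill} and Remark \ref{homkill}: the relevant Floer trajectories stay inside the disjoint union of $K_1$, $K_2$ and their cylindrical collars, and connectedness then confines each trajectory to a single component. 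This is precisely where the standing hypotheses (asphericity, index-boundedness) enter, and it is also the implicit basis for the paper's own phrases ``we may think that $\alpha_1',\alpha_2'\in SH^{S^1,-,>L}_M(K_1\cup K_2;\Lambda)$'' and ``we can decompose $\alpha=\alpha_1+\alpha_2$'' --- the paper does not spell the splitting out either, so your write-up is not worse in substance, only mis-attributed in the tool it names. Your alternative route via the Mayer--Vietoris triangle of Proposition \ref{mvs1} (disjoint compact domains do Poisson-commute, e.g.\ via defining functions whose differentials have disjoint supports) yields, as you note, only the unfiltered isomorphism, so it cannot substitute for the filtered, $U$- and $\delta$-compatible splitting that the capacity computation requires.
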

\begin{proof}
    Since $K_1$ and $K_2$ are disjoint, we can identify $H(K_1 \cup K_2, \partial(K_1 \cup K_2);\Lambda)$ with $H(K_1, \partial K_1; \Lambda) \oplus H(K_1, \partial K_1; \Lambda)$. Let \begin{align*}
        &a = \sup\left\{L \bigmid \begin{array}{cc}
         & \delta U^{k-1} \iota_L(\alpha) = [\omega^n|_{K_1 \cup K_2}] \otimes [\text{pt}] \in H(K_1 \cup K_2, \partial (K_1 \cup K_2) ; \Lambda) \otimes H(BS^1 ; \Lambda) \\
         & \,\,\text{for some}\,\, \alpha \in SH^{S^1, -, >L}_M(K_1 \cup K_2 ; \Lambda)
    \end{array}
    \right\},\\
    &b = \sup\left\{L \bigmid \begin{array}{cc}
         & \delta U^{k-1} \iota_L(\alpha) = [\omega^n|_{K_1}] \otimes [\text{pt}] \in H(K_1, \partial K_1 ; \Lambda) \otimes H(BS^1 ; \Lambda) \\
         & \,\,\text{for some}\,\, \alpha \in SH^{S^1, -, >L}_M(K_1 ; \Lambda)
    \end{array}
    \right\}
    \end{align*}
    and 
    \begin{align*}
        c = \sup\left\{L \bigmid \begin{array}{cc}
         & \delta U^{k-1} \iota_L(\alpha) = [\omega^n|_{K_2}] \otimes [\text{pt}] \in H(K_1, \partial K_2 ; \Lambda) \otimes H(BS^1 ; \Lambda) \\
         & \,\,\text{for some}\,\, \alpha \in SH^{S^1, -, >L}_M(K_2 ; \Lambda)
    \end{array}
    \right\}.
    \end{align*}
    We shall prove $a = \min\{b, c\}$. First, we prove that $a \geq \min\{b, c\}$. Suppose that $\alpha_1 \in SH^{S^1,-,>L_1}_M(K_1; \Lambda)$  and $\alpha_2 \in SH^{S^1,-,>L_2}_M(K_2; \Lambda)$ satisfy
    \begin{align*}
        \delta U^{k-1}\iota_{L_1}(\alpha_1) = [\omega^n|_{K_1}] \otimes [\text{pt}] \,\,\text{and}\,\, \delta U^{k-1}\iota_{L_1}(\alpha_1) = [\omega^n|_{K_2}] \otimes [\text{pt}].
    \end{align*}
    Denote $\min\{L_1, L_2\}$ by $L$. Let 
    \begin{align*}
        \alpha_1' = \iota_{L, L_1}(\alpha_1) \in SH^{S^1,-,>L}_M(K_1; \Lambda) \,\,\text{and}\,\,\alpha_2' = \iota_{L, L_2}(\alpha_2) \in SH^{S^1,-,>L}_M(K_2; \Lambda).
    \end{align*}
    We may think that $\alpha_1', \alpha_2' \in SH^{S^1, -, >L}_M(K_1 \cup K_2; \Lambda)$. Then 
    \begin{align*}
        &\delta U^{k-1} \iota_L(\alpha_1' + \alpha_2') &&\text{Maps are defined over $K_1 \cup K_2$}\\& = \delta U^{k-1} \iota_L(\alpha_1') + \delta U^{k-1} \iota_L(\alpha_2')&&\text{Maps are defined over $K_1$ and $K_2$}\\&=\delta U^{k-1} \iota_{L} (\iota_{L, L_1}(\alpha_1)) + \delta U^{k-1} \iota_{L} (\iota_{L, L_2}(\alpha_2)) \\&= \delta U^{k-1}\iota_{L_1}(\alpha_1) + \delta U^{k-1}\iota_{L_2}(\alpha_2)\\&= [\omega^n|_{K_1}]\otimes [\text{pt}] + [\omega^n|_{K_2}] \otimes [\text{pt}]\\&= [\omega^n|_{K_1 \cup K_2}] \otimes [\text{pt}].
    \end{align*}
    This implies that $a \geq \min\{b, c\}$. Now it remains to show that $a \leq \min\{b, c\}$. Suppose that $\alpha \in SH^{S^1,-,>L}_M(K_1 \cup K_2; \Lambda)$ satisfies 
    \begin{align}\label{decomposition}
        \delta U^{k-1} \iota_L(\alpha) = [\omega^n|_{K_1 \cup K_2}] \otimes [\text{pt}] = [\omega^n|_{K_1}] \otimes [\text{pt}] + [\omega^n|_{K_2}] \otimes [\text{pt}].
    \end{align}
    We can decompose $\alpha = \alpha_1 + \alpha_2$ where
    \begin{align*}
        \alpha_1 \in SH^{S^1,-,>L}_M(K_1; \Lambda)\,\,\text{and} \,\,\alpha_2 \in SH^{S^1,-,>L}_M(K_2; \Lambda).
    \end{align*}
    Comparing terms in \eqref{decomposition}, we have
    \begin{align}\label{comparison}
    \delta U^{k-1} \iota_L(\alpha_1) = [\omega^n|_{K_1}] \otimes [\text{pt}]\,\,\text{and}\,\,\delta U^{k-1} \iota_L(\alpha_2) = [\omega^n|_{K_2}] \otimes [\text{pt}].
    \end{align}
    From the first equality of \eqref{comparison}, $L \leq b$ and hence $a \leq b$. Analogously, the second equality in \eqref{comparison} implies $a \leq c$. Therefore, $a \leq \min\{b, c\}$.\\
\end{proof}
For the first relative Gutt-Hutchings capacity $c_1^{GH}(M,K)$, we have an easier description and the main idea to see this is Corollary \ref{firstgh}.
\begin{proposition}\label{firstghpro}
    The first relative Gutt-Hutchings capacity $c_1^{GH}(M,K)$ can be given by
    \begin{align}\label{newgh}
        c_1^{GH}(M,K) = - \sup \left\{ L < 0 \bigmid j_L^{S^1}([\omega^n|_K] \otimes [\text{pt}]) = 0 \right\}
    \end{align}
    where the map $j_L^{S^1}$ is given in Remark \ref{futureuse}.
\end{proposition}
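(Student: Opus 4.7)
The plan is to apply Corollary \ref{firstgh} together with Remark \ref{futureuse}(b), which identifies the connecting morphism in that exact triangle as $\delta \circ \iota_L$. Specializing $k=1$ in the definition, so that $U^{k-1} = \text{id}$, the original expression for $c_1^{GH}(M,K)$ becomes
\begin{align*}
    c_1^{GH}(M,K) = - \sup\left\{L < 0 \bigmid \begin{array}{cc}
         & \delta \iota_L(\alpha) = [\omega^n|_K] \otimes [\text{pt}] \\
         & \,\,\text{for some}\,\, \alpha \in SH^{S^1, -, >L}_M(K ; \Lambda)
    \end{array}
    \right\}.
\end{align*}
So it suffices to prove that for each $L < 0$, the class $[\omega^n|_K] \otimes [\text{pt}] \in H(K,\partial K;\Lambda) \otimes H(BS^1;\Lambda)$ lies in the image of $\delta \circ \iota_L$ if and only if $j_L^{S^1}([\omega^n|_K] \otimes [\text{pt}]) = 0$.

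To see this, I would invoke Corollary \ref{firstgh}, which provides an exact triangle whose horizontal arrow is, by Remark \ref{futureuse}(a), the map $j^{S^1}_L$, and whose connecting degree-$1$ arrow from $SH^{S^1,-,>L}_M(K;\Lambda)$ back to $H(K,\partial K;\Lambda)\otimes H(BS^1;\Lambda)$ is $\delta \circ \iota_L$ by Remark \ref{futureuse}(b). Unfolding this triangle into a long exact sequence, exactness at $H(K,\partial K;\Lambda)\otimes H(BS^1;\Lambda)$ yields
\begin{align*}
    \ker j^{S^1}_L = \operatorname{Im}(\delta \circ \iota_L).
\end{align*}
In particular, applying this to the element $[\omega^n|_K] \otimes [\text{pt}]$, the existence of some $\alpha \in SH^{S^1,-,>L}_M(K;\Lambda)$ with $\delta\iota_L(\alpha) = [\omega^n|_K] \otimes [\text{pt}]$ is equivalent to $j^{S^1}_L([\omega^n|_K] \otimes [\text{pt}]) = 0$.

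Since the two defining conditions agree pointwise in $L$, the supremum sets coincide, and hence the two expressions for $c_1^{GH}(M,K)$ are identical. There is no real obstacle here: the only conceptual input is the identification of the connecting map in the Gysin-type triangle of Corollary \ref{firstgh} with $\delta \circ \iota_L$, which is recorded in Remark \ref{futureuse}(b); everything else is a direct application of exactness.
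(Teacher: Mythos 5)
Your proposal is correct and follows essentially the same route as the paper: the paper's proof likewise reduces to the $k=1$ case of the definition and then uses exactness of the triangle in Corollary \ref{firstgh}, with the degree-$1$ map identified as $\delta \circ \iota_L$ via Remark \ref{futureuse}, to show that $[\omega^n|_K]\otimes[\text{pt}]$ lies in the image of $\delta\circ\iota_L$ exactly when $j_L^{S^1}$ kills it. No gaps; the argument matches the paper's.
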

\begin{proof}
    Recall that
    \begin{align*}
        c_1^{GH}(M, K) = - \sup \left\{ L < 0 \bigmid \delta  \iota_L (\alpha) = [\omega^n|_K] \otimes [\text{pt}] \,\, \text{for some} \,\, \alpha \in SH^{S^1, -, >L}_M(K ; \Lambda) \right\}.
    \end{align*}
    Suppose that $\delta  \iota_L (\alpha) = [\omega^n|_K] \otimes [\text{pt}]$ for some $\alpha \in SH^{S^1, -, >L}_M(K ; \Lambda)$. Then by the exactness of the triangle \eqref{ET} in the proof of Corollary \ref{firstgh}, $j_L^{S^1}([\omega^n|_K] \otimes [\text{pt}]) = j_L^{S^1}(\delta  \iota_L (\alpha)) = 0$. Conversely, suppose that $j_L^{S^1}([\omega^n|_K] \otimes [\text{pt}]) = 0$. Then also by the exactness of the triangle \eqref{ET}, there exists $\alpha \in SH^{S^1,-,>L}_M(K; \Lambda)$ such that $\delta \iota_L (\alpha) = [\omega^n|_K] \otimes [\text{pt}]$. Thus far we proved that 
    \begin{align*}
       &\left\{ L < 0 \bigmid \delta  \iota_L (\alpha) = [\omega^n|_K] \otimes [\text{pt}] \,\, \text{for some} \,\, \alpha \in SH^{S^1, -, >L}_M(K ; \Lambda) \right\} \\&=  \left\{ L < 0 \bigmid j_L^{S^1}([\omega^n|_K] \otimes [\text{pt}]) = 0 \right\}
    \end{align*}
    and this concludes the proof.\\
\end{proof}
\subsection{Relative symplectic (co)homology capacity}
Now we shall define another relative symplectic capacity exteding the definition of symplectic (co)homology capacity defined in \cite{fhw}. Background for defining this is the non $S^1$-equivariant version of Proposition \ref{building}, Corollary \ref{firstgh} and Proposition \ref{building2}. See Remark \ref{nons1}. 
\begin{definition}
    Define the relative symplectic (co)homology capacity by
    \begin{align*}
        c^{SH}(M, K) = - \sup \left\{ L < 0 \bigmid j_L([\omega^n|_K]) = 0 \right\}
    \end{align*}
    where the map $j_L$ is defined in Remark \ref{nons1}.\\
\end{definition}
\begin{remark}\label{altsh}
The relative symplectic (co)homology capacity $c^{SH}(M, K)$ has also alternative description similar to $c_1^{GH}(M, K)$. In a 
similar way that we proved Proposition \ref{firstghpro}, we can show that
    \begin{align*}
        c^{SH}(M, K) = - \sup \left\{ L < 0 \bigmid \delta \iota_L (\alpha) = [\omega^n|_K]  \,\, \text{for some} \,\, \alpha \in SH^{-, >L}_M(K ; \Lambda) \right\}.\\
    \end{align*}

\end{remark}
\begin{proposition}\label{shc}
    $c^{SH}(M, K)$ satisfies the following properties.
    \begin{enumerate}[label=(\alph*)]
        \item For $r>0$, $c^{SH}((M,r \omega), K) = rc^{SH}((M, \omega), K)$.\\
        \item If there exists a symplectic embedding $\phi : (M, \omega) \hookrightarrow (M', \omega')$ with $\text{int}\,\phi(K) \subset K'$, then $c^{SH}(M, K) \leq c^{SH}(M', K')$. \\
    \end{enumerate}
\end{proposition}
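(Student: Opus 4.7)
The plan is to treat the two properties separately. Conformality (a) follows from the rescaling isomorphism of Proposition \ref{building}(d) together with the fact that $[(r\omega)^n|_K] = r^n [\omega^n|_K]$, while monotonicity (b) follows from naturality of $\delta \circ \iota_L$ under the transfer morphism $\Phi^L$. Throughout I will work with the reformulation of $c^{SH}(M,K)$ given in Remark \ref{altsh}.

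\textbf{Part (a).} Fix $r>0$. By the non-$S^1$-equivariant analogue of Proposition \ref{building}(d) (see Remark \ref{nons1}), for every $L < 0$ there is a $\Lambda_{\geq 0}$-linear isomorphism
\[
    SH^{-,>L}_{(M,\omega)}(K;\Lambda) \;\cong\; SH^{-,>rL}_{(M,r\omega)}(K;\Lambda)
\]
induced by the chain-level identification $H \leftrightarrow rH$, and this isomorphism intertwines the maps $\iota_L$ and $\delta$ appearing in the defining exact triangle. Since the target of $\delta$ is the topological group $H(K,\partial K;\Lambda)$, and the fundamental class rescales as $[(r\omega)^n|_K] = r^n[\omega^n|_K]$, the condition ``$\delta\iota_L(\alpha) = [\omega^n|_K]$'' on the $(M,\omega)$ side becomes, after applying the isomorphism and multiplying by $r^n$, the condition ``$\delta\iota_{rL}(\alpha') = [(r\omega)^n|_K]$'' on the $(M,r\omega)$ side. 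Thus the set of $L<0$ appearing in the definition of $c^{SH}((M,\omega),K)$ is sent bijectively onto the analogous set for $c^{SH}((M,r\omega),K)$ by $L \mapsto rL$, and taking supremums (and negating) gives $c^{SH}((M,r\omega),K) = r\, c^{SH}((M,\omega),K)$.

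\textbf{Part (b).} Again by Remark \ref{altsh}, it suffices to show that if $\alpha' \in SH^{-,>L}_{M'}(K';\Lambda)$ satisfies $\delta\iota_L(\alpha') = [\omega^n|_{K'}]$, then some $\alpha \in SH^{-,>L}_M(K;\Lambda)$ satisfies $\delta\iota_L(\alpha) = [\omega^n|_K]$. Set $\alpha := \Phi^L(\alpha')$, where $\Phi^L$ is the transfer morphism of Proposition \ref{building2} in its non-$S^1$-equivariant form (Remark \ref{nons1}). Then, applying successively the identity $\iota_L \circ \Phi^L = \Phi \circ \iota_L$ from Proposition \ref{building2}(b) and the identity $\delta \circ \Phi = \rho \circ \delta$ from Proposition \ref{building2}(d), both in their non-$S^1$-equivariant forms, we compute
\[
    \delta\iota_L(\alpha) = \delta\iota_L\Phi^L(\alpha') = \delta\Phi\iota_L(\alpha') = \rho\,\delta\iota_L(\alpha') = \rho([\omega^n|_{K'}]) = [\omega^n|_K],
\]
where the last equality uses that $\phi^*\omega' = \omega$ on $K$ and that $\rho$ sends top class to top class. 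Taking supremum over admissible $L$ and negating gives $c^{SH}(M,K)\le c^{SH}(M',K')$.

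\textbf{Main obstacle.} Both arguments are essentially bookkeeping given the earlier results, so the only delicate points are (i) the naturality of the conformality isomorphism with respect to $\iota_L$ and $\delta$ in part (a), and (ii) the identity $\rho([\omega^n|_{K'}]) = [\omega^n|_K]$ in part (b). For (i), all structural maps (inclusions between action filtrations, connecting homomorphisms of the short exact sequence defining the action triangle) are visibly preserved by the chain-level identification $H \leftrightarrow rH$, because Floer trajectories for $(H,(M,\omega))$ and $(rH,(M,r\omega))$ coincide set-theoretically while actions get multiplied by $r$. For (ii), the map $\rho$ identifies, via the Viterbo-type isomorphism from the proof of Proposition \ref{building}(c), with the map on relative Morse cohomology induced by restricting a Morse perturbation from $K'$ to $K$; under this description the restriction of the top class of $K'$ is the top class of $K$, as $\omega'^n$ restricts to $\omega^n$ under the symplectic embedding $\phi$.
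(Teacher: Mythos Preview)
Your proof is correct and follows essentially the same strategy as the paper, the only difference being that you work throughout with the $\delta\circ\iota_L$ description of $c^{SH}$ from Remark \ref{altsh}, whereas the paper uses the equivalent $j_L$ description directly. In particular, your argument for (b) is the non-$S^1$-equivariant analogue of the paper's proof of Proposition \ref{ghc}(c) rather than of Proposition \ref{shc}(b), but since the two descriptions are interchangeable this is a cosmetic distinction; your handling of the rescaled fundamental class in (a) and of $\rho([\omega^n|_{K'}]) = [\omega^n|_K]$ in (b) is also sound.
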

\begin{proof}
    (a) Denote the map $j_L$ on $(M, \omega)$ by $j_{L}^{(M, \omega)}$. Then by (the non $S^1$-equivariant version of) (d) of Proposition \ref{building} and Remark \ref{nons1}, $j_L^{(M, r \omega)} = j_{\frac{L}{r}}^{(M, \omega)}$ for each $r > 0$ and hence
    \begin{align*}
        c^{SH}((M, r \omega), K) &= - \sup \left\{ L < 0 \bigmid j_L^{(M, r \omega)}([\omega^n|_K]) = 0 \right\} \\&= - \sup \left\{ L < 0 \bigmid j_{\frac{L}{r}}^{(M,  \omega)}([\omega^n|_K]) = 0 \right\}\\& = - \sup \left\{ rL < 0 \bigmid j_{L}^{(M,  \omega)}([\omega^n|_K]) = 0 \right\} \\& = - r \sup \left\{ L < 0 \bigmid j_L^{(M, \omega)}([\omega^n|_K]) = 0 \right\} \\&= r c^{SH}((M, \omega), K).
    \end{align*}
    (b) Denote the map $j_L$ for $(M, K)$ by $j_L^{(M, K)}$. Suppose that $j_{L}^{(M', K')} ([\omega^n|_{K'}]) = 0$. Consider the following commutative diagram. 
    \begin{align}\label{shmono}
     \includegraphics[scale=1.25]{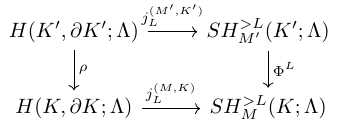}   
    \end{align}
   Suppose that $j_L^{(M', K')}([\omega^n|_{K'}]) = 0$. Since 
    \begin{align*}
        j_L^{(M, K)}([\omega^n|_K]) &= j_L^{(M, K)}\left(\rho([\omega^n|_{K'}])\right) &&\text{Definition of}\,\,\rho \\&= \Phi^L\left(j_L^{(M',K')}([\omega^n|_{K'}])\right) &&\text{Commutativity of}\,\,\eqref{shmono}\\&= 0,
    \end{align*}
    the monotonicity property of $c^{SH}$ is proved.\\
\end{proof}
The relative symplectic capacity $c^{SH}$ satisfies the same disjoint union property of $c_k^{GH}$ described in Proposition \ref{disjoint}.
\begin{proposition}\label{disjointsh}
    Let $K_1, K_2 \subset M$ be disjoint compact domains with contact type and index-bounded boundaries. Then
    \begin{align*}
        c^{SH}(M, K_1 \cup K_2) = \max\left\{c^{SH}(M, K_1), c^{SH}(M, K_2)\right\}.
    \end{align*}
    
  \end{proposition}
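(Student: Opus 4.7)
The plan is to mimic the proof of Proposition \ref{disjoint} in the non $S^1$-equivariant setting, replacing the first Gutt-Hutchings witness condition with the symplectic cohomology witness condition described in Remark \ref{altsh}. That is, I will work with
\begin{align*}
    c^{SH}(M,K) = -\sup\left\{L < 0 \bigmid \delta \iota_L(\alpha) = [\omega^n|_K] \,\,\text{for some}\,\, \alpha \in SH^{-,>L}_M(K;\Lambda)\right\},
\end{align*}
denote the three sups for $K_1 \cup K_2$, $K_1$, and $K_2$ by $a$, $b$, and $c$ respectively, and aim to prove $a = \min\{b,c\}$.

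The starting observation is that because $K_1$ and $K_2$ are disjoint, there is a canonical decomposition $H(K_1 \cup K_2, \partial(K_1 \cup K_2); \Lambda) \cong H(K_1, \partial K_1; \Lambda) \oplus H(K_2, \partial K_2; \Lambda)$ under which $[\omega^n|_{K_1 \cup K_2}] = [\omega^n|_{K_1}] + [\omega^n|_{K_2}]$. Moreover, by choosing a cofinal family $\{H_n\}$ in $\mathcal{H}_{K_1 \cup K_2}^{\text{Cont}}$ of ``contact type $K_i$-admissible'' Hamiltonians that split into a pair of Hamiltonians supported near each $K_i$, the action-filtered Floer complex decomposes as a direct sum so that $SH^{-,>L}_M(K_1 \cup K_2; \Lambda) \cong SH^{-,>L}_M(K_1; \Lambda) \oplus SH^{-,>L}_M(K_2; \Lambda)$, and the maps $\iota_L$, $\iota_{L,L'}$, and $\delta$ all respect this splitting.

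For the inequality $a \geq \min\{b,c\}$, I take witnesses $\alpha_i \in SH^{-,>L_i}_M(K_i; \Lambda)$ with $\delta \iota_{L_i}(\alpha_i) = [\omega^n|_{K_i}]$ for $i = 1,2$, set $L = \min\{L_1, L_2\}$, and form $\alpha_1' + \alpha_2' \in SH^{-,>L}_M(K_1 \cup K_2; \Lambda)$ where $\alpha_i' = \iota_{L, L_i}(\alpha_i)$. Using the commutativity $\iota_L \circ \iota_{L, L_i} = \iota_{L_i}$ (the non $S^1$-equivariant analogue of (a) of Proposition \ref{building}) together with the splitting of $\delta$, I obtain
\begin{align*}
    \delta \iota_L(\alpha_1' + \alpha_2') = [\omega^n|_{K_1}] + [\omega^n|_{K_2}] = [\omega^n|_{K_1 \cup K_2}],
\end{align*}
so $L$ is an admissible value for the sup defining $a$, giving $a \geq \min\{L_1, L_2\}$, and hence $a \geq \min\{b, c\}$.

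For the opposite inequality, I take a witness $\alpha \in SH^{-,>L}_M(K_1 \cup K_2; \Lambda)$ with $\delta \iota_L(\alpha) = [\omega^n|_{K_1 \cup K_2}]$, decompose $\alpha = \alpha_1 + \alpha_2$ under the direct-sum splitting, and read off $\delta \iota_L(\alpha_i) = [\omega^n|_{K_i}]$ from the two components, concluding $L \leq b$ and $L \leq c$, so $a \leq \min\{b, c\}$. The main technical obstacle is justifying the splitting of $SH^{-,>L}_M(K_1 \cup K_2; \Lambda)$ compatibly with $\iota_L$ and $\delta$; this requires checking that a cofinal family in $\mathcal{H}_{K_1 \cup K_2}^{\text{Cont}}$ can be arranged so that Floer trajectories connecting generators localized near $K_1$ with those localized near $K_2$ contribute nothing, which follows because the relevant Hamiltonians can be made constant and equal on the complement of small neighborhoods of $K_1 \cup K_2$ (so the two constellations of orbits become action-separated from each other in a way that persists under completion, mirroring the argument of Proposition \ref{disjoint}).
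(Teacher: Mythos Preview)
Your proposal is correct and takes essentially the same approach as the paper: the paper's proof of Proposition \ref{disjointsh} consists solely of the sentence ``Analogous to the proof of Proposition \ref{disjoint},'' and you have spelled out exactly that analogy, using the witness description of $c^{SH}$ from Remark \ref{altsh} and dropping the $U^{k-1}$ factor. Your final paragraph about justifying the direct-sum splitting is more detail than the paper provides even in Proposition \ref{disjoint}, but it is consistent with what is implicitly assumed there.
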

  \begin{proof}
        Analogous to the proof of Proposition \ref{disjoint}.\\
    \end{proof}
   For relative symplectic (co)homology capacity, we can say something when compact domains are not disjoint. In the following lemma, we use the notation $j_L^{K}$, for clarification, to denote the map $j_L : H(K, \partial K ; \Lambda ) \to SH^{>L}_M(K ;\Lambda)$ introduced in Remark \ref{nons1}.
        \begin{lemma}\label{contactpair}
            Let $(K_1, K_2)$ be a contact pair in $M$. Assume that $K_1, K_2, K_1 \cup K_2$ and $K_1 \cap K_2$ are compact domains with contact type and index-bounded boundaries and that $K_1$ and $K_2$ satisfy descent.
           \begin{enumerate}[label=(\alph*)]
                \item If $j_L^{K_1}([\omega^n|_{K_1}])=0$, then $ j_L^{K_1 \cap K_2}([\omega^n|_{K_1\cap K_2}]) = 0$. Moreover, if the restriction map 
                \begin{align*}
                    r^{K_1}_{K_1 \cap K_2} : SH_M(K_1 ; \Lambda) \lr SH_M(K_1 \cap K_2; \Lambda) 
                \end{align*}
                is injective, then the converse is also true.\\
                \item If $ j_L^{K_1 \cup K_2}([\omega^n|_{K_1\cup K_2}]) = 0$, then $j_L^{K_1}([\omega^n|_{K_1}])=0$. Moreover, if the restriction map 
                \begin{align*}
                    r^{K_1 \cup K_2}_{K_1} : SH_M(K_1 \cup K_2; \Lambda)  \lr SH_M(K_1 ; \Lambda)  
                \end{align*}
                is injective, then the converse is also true.\\
            \end{enumerate}
       \end{lemma}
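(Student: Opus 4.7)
The plan is to prove the forward and converse implications of (a) and (b) separately. The forward directions follow from the naturality of $j_L$ under restriction, while the converses combine that naturality with the action-filtered Mayer-Vietoris triangle of Proposition \ref{mv}, the exact triangle \eqref{triforsh} of Remark \ref{nons1}, and the injectivity hypothesis on $SH_M$.

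For the forward direction, the central tool is the commutative square
\begin{equation*}
\begin{tikzcd}
H(K', \partial K' ; \Lambda) \arrow[r, "j_L^{K'}"] \arrow[d, "\rho"'] & SH^{>L}_M(K'; \Lambda) \arrow[d, "r^{K'}_{K}"] \\
H(K, \partial K ; \Lambda) \arrow[r, "j_L^K"'] & SH^{>L}_M(K; \Lambda),
\end{tikzcd}
\end{equation*}
valid for any pair $K \subset K'$ of compact contact-type domains, with $\rho([\omega^n|_{K'}]) = [\omega^n|_K]$. Its commutativity is inherited from the chain-level description of $j_L$ as the inclusion-induced map $CF_w^{>-\epsilon}(H) \hookrightarrow CF_w^{>L}(H)$, which commutes with the continuation maps underlying $r^{K'}_K$. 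Specializing to $(K',K) = (K_1, K_1 \cap K_2)$ gives (a) forward; specializing to $(K',K) = (K_1 \cup K_2, K_1)$ gives (b) forward.

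For the converse of (a), assume $j_L^{K_1 \cap K_2}([\omega^n|_{K_1 \cap K_2}])=0$ and that $r^{K_1}_{K_1 \cap K_2}$ is injective on $SH_M$. The naturality square yields $r^{K_1}_{K_1 \cap K_2}(j_L^{K_1}([\omega^n|_{K_1}])) = 0$ in $SH^{>L}_M(K_1 \cap K_2;\Lambda)$. Applying $\iota_L : SH^{>L}_M \to SH_M$, which commutes with restrictions, and invoking the injectivity hypothesis, one obtains $\iota_L(j_L^{K_1}([\omega^n|_{K_1}])) = 0$ in $SH_M(K_1;\Lambda)$. Since $\iota_L \circ j_L^{K_1}$ is the unit map $H(K_1, \partial K_1 ;\Lambda) \to SH_M(K_1;\Lambda)$, the unfiltered exact triangle $SH^{>-\epsilon}_M(K_1;\Lambda) \to SH_M(K_1;\Lambda) \to SH^{-}_M(K_1;\Lambda)$ produces $\tilde\alpha \in SH^{-}_M(K_1;\Lambda)$ with $\delta(\tilde\alpha) = [\omega^n|_{K_1}]$. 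Using the direct-limit description $SH^{-}_M(K_1;\Lambda) = \varinjlim_{L'} SH^{-,>L'}_M(K_1;\Lambda)$, together with the index-boundedness of $\partial K_1$ and a Mayer-Vietoris-based comparison of filtered and unfiltered settings, one realizes $\tilde\alpha$ as $\iota_L(\alpha)$ for some $\alpha \in SH^{-,>L}_M(K_1;\Lambda)$; then $\delta\iota_L(\alpha) = [\omega^n|_{K_1}]$, and Remark \ref{altsh} concludes $j_L^{K_1}([\omega^n|_{K_1}]) = 0$. The converse of (b) is structurally identical after swapping $(r^{K_1}_{K_1 \cap K_2}, K_1 \cap K_2) \leftrightarrow (r^{K_1 \cup K_2}_{K_1}, K_1 \cup K_2)$.

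The principal obstacle in the converses is the gap between the assumed injectivity on the full $SH_M$ and the vanishing needed at the action-filtered level $SH^{>L}_M$: a direct chase produces only $\iota_L(j_L^{K_1}([\omega^n|_{K_1}])) = 0$, which is a priori strictly weaker than $j_L^{K_1}([\omega^n|_{K_1}]) = 0$. Overcoming this gap is the technical heart of the proof and requires the combined use of the action-filtered Mayer-Vietoris triangle, the direct-limit characterization of $SH^{-}_M$, and the index-boundedness of the contact boundaries.
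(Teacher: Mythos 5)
Your forward implications are correct and are essentially the paper's argument: the paper packages the computation in the commutative diagram \eqref{sandwich} (restriction maps on $H(\cdot,\partial\cdot;\Lambda)$ over the Mayer--Vietoris maps of Proposition \ref{mv}), but what is actually used is exactly your naturality square $r^{K_1}_{K_1\cap K_2}\circ j_L^{K_1}=j_L^{K_1\cap K_2}\circ\rho$, resp.\ its analogue for $K_1\cup K_2\supset K_1$.

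The converse directions, however, contain a genuine gap. From the naturality square you correctly get $r^{K_1}_{K_1\cap K_2}\bigl(j_L^{K_1}([\omega^n|_{K_1}])\bigr)=0$ in $SH^{>L}_M(K_1\cap K_2;\Lambda)$, but you then apply $\iota_L$ and use the injectivity hypothesis only on the unfiltered groups, arriving at $\iota_L\bigl(j_L^{K_1}([\omega^n|_{K_1}])\bigr)=0$, i.e.\ the vanishing of the unfiltered class $j^{K_1}_{-\infty}([\omega^n|_{K_1}])$ --- a statement that retains no memory of $L$. The remaining step, lifting the resulting $\tilde\alpha\in SH^{-}_M(K_1;\Lambda)$ with $\delta(\tilde\alpha)=[\omega^n|_{K_1}]$ to some $\alpha\in SH^{-,>L}_M(K_1;\Lambda)$ with $\delta\iota_L(\alpha)=[\omega^n|_{K_1}]$, is precisely the assertion $j_L^{K_1}([\omega^n|_{K_1}])=0$ and is not proved: the direct-limit description only produces a lift to $SH^{-,>L'}_M(K_1;\Lambda)$ for some sufficiently negative $L'$, not for the prescribed $L$, and naming ``index-boundedness and a Mayer--Vietoris-based comparison'' does not supply an argument. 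Worse, the realization step as you state it makes no use of the injectivity hypothesis, so if it were valid it would show that $j^{K}_{-\infty}([\omega^n|_{K}])=0$ forces $j^{K}_L([\omega^n|_{K}])=0$ for every $L<0$, hence $c^{SH}(M,K)=0$ whenever $SH_M(K;\Lambda)=0$; this is false, since for $L\in(-\epsilon,0)$ one has $SH^{-,>L}_M(K;\Lambda)=0$ (no nonconstant lower orbit has action $>-\epsilon$), so by Remark \ref{altsh} such $L$ never qualify and $c^{SH}(M,K)\geq\epsilon>0$. The paper's proof never leaves action level $L$: it notes that the injectivity hypothesis passes to the restriction map on the filtered groups $SH^{>L}_M(K_1;\Lambda)\to SH^{>L}_M(K_1\cap K_2;\Lambda)$ and applies that filtered injectivity directly to $r^{K_1}_{K_1\cap K_2}\bigl(j_L^{K_1}([\omega^n|_{K_1}])\bigr)=0$, concluding $j_L^{K_1}([\omega^n|_{K_1}])=0$ at once (and similarly in case (b)). To repair your argument you need that filtered injectivity statement (or a substitute for it), not the unfiltered one.
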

           \begin{proof}
           Consider the following commutative diagram.
           \begin{align}\label{sandwich}
               \includegraphics[scale=0.97]{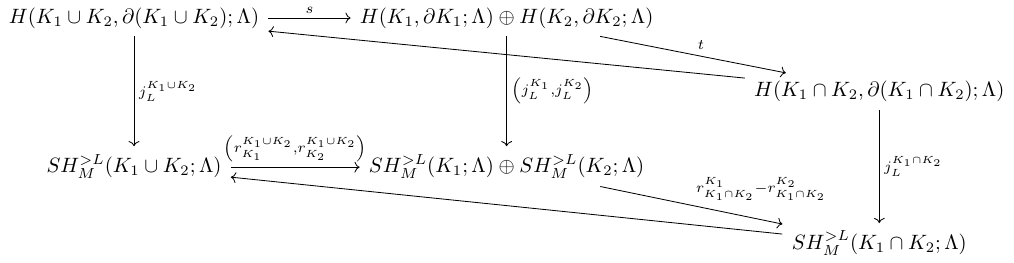}
           \end{align}
The arrows of the upper triangle of \eqref{sandwich} are induced by restrictions. More precisely,
\begin{align*}
s(\alpha) = (\alpha|_{K_1}, \alpha|_{K_2})\,\,\text{and}\,\, t(a \alpha, b \beta) = a \alpha|_{K_1 \cap K_2} - b \beta|_{K_1 \cap K_2}
   \end{align*}
for $a, b \in \Lambda$. The lower triangle of \eqref{sandwich} is the Mayer-Vietoris exact triangle shown in Proposition \ref{mv}. \\

\noindent
(a) Assume that $j_L^{K_1}([\omega^n|_{K_1}])=0$. Then
\begin{align*}
    j_L^{K_1 \cap K_2}([\omega^n|_{K_1\cap K_2}]) &= j_L^{K_1 \cap K_2}(t([\omega^n|_{K_1}], 0)) \,\,&&\text{Definition of}\,\, t \\&=\left( r^{K_1}_{K_1 \cap K_2} - r^{K_2}_{K_1 \cap K_2} \right) \left( j_L^{K_1}, j_L^{K_2}\right) ([\omega^n|_{K_1}], 0) \,\,&&\text{Commutativity of}\,\, \eqref{sandwich}\\&= r^{K_1}_{K_1 \cap K_2}   \left(j_L^{K_1}([\omega^n|_{K_1}])\right )\\&= 0.
\end{align*}
          It remains to prove the converse assuming that the restriction map \begin{align}\label{resta}
              r^{K_1 \cap K_2}_{K_1} : SH_M(K_1 \cup K_2; \Lambda)  \lr SH_M(K_1 ; \Lambda)
          \end{align} is injective. Note that if the map \eqref{resta} is injective, then its restriction to action filtration, also denoted by $r^{K_1 \cap K_2}_{K_1}$, 
          \begin{align*}
              r^{K_1 \cap K_2}_{K_1} : SH_M^{>L}(K_1 \cup K_2; \Lambda)  \lr SH_M^{>L}(K_1 ; \Lambda)
          \end{align*}
is still injective. We compute $ j_L^{K_1 \cap K_2} \left(t ([\omega^n|_{K_1}], 0)\right)$. On one hand,
\begin{align*}
    j_L^{K_1 \cap K_2} \left(t ([\omega^n|_{K_1}], 0)\right) &= j_L^{K_1 \cap K_2} ([\omega^n|_{K_1 \cap K_2}]) &&\text{Definition of}\,\, t\\&=0. &&\text{Assumption of the converse}
\end{align*}
On the other hand,
\begin{align*}
    j_L^{K_1 \cap K_2} \left(t ([\omega^n|_{K_1}], 0)\right) &=\left( r^{K_1}_{K_1 \cap K_2} - r^{K_2}_{K_1 \cap K_2} \right) \left( j_L^{K_1}, j_L^{K_2}\right) ([\omega^n|_{K_1}], 0) &&\text{Commutativity of}\,\, \eqref{sandwich}\\&= r^{K_1}_{K_1 \cap K_2}\left( j_L^{K_1}([\omega^n|_{K_1}])\right).
\end{align*}
Hence, we have $r^{K_1}_{K_1 \cap K_2}\left( j_L^{K_1}([\omega^n|_{K_1}])\right) = 0$ and $j_L^{K_1}([\omega^n|_{K_1}]) = 0$ by the injectivity of $r^{K_1}_{K_1 \cap K_2}$.\\
(b) Assume that $ j_L^{K_1 \cup K_2}([\omega^n|_{K_1\cup K_2}]) = 0$. Then
\begin{align*}
    j_L^{K_1}([\omega^n|_{K_1}])&= j_L^{K_1}(s([\omega^n|_{K_1 \cup K_2}]) &&\text{Definition of}\,\, s \\&= \left(j_L^{K_1}, j_L^{K_2} \right)(s([\omega^n|_{K_1\cup K_2}], 0)) \\&= \left( r^{K_1 \cup K_2}_{K_1}, r^{K_1 \cup K_2}_{K_2} \right) (j_L^{K_1 \cup K_2}([\omega^n|_{K_1\cup K_2}])) &&\text{Commutativity of}\,\,\eqref{sandwich}\\&=0.
\end{align*}
Now assume that $j_L^{K_1}([\omega^n|_{K_1}])=0$ and that the restriction map 
                \begin{align*}
                    r^{K_1 \cup K_2}_{K_1} : SH_M(K_1 \cup K_2; \Lambda)  \lr SH_M(K_1 ; \Lambda)  
                \end{align*}
                is injective. Similarly as before, we compute $\left( j_L^{K_1}, j_L^{K_2} \right) (s ([\omega^n|_{K_1 \cup K_2}]))$ in two ways. It can be easily seen from the commutative diagram \eqref{sandwich} that
                \begin{align*}
                    \left( j_L^{K_1}, j_L^{K_2} \right) (s ([\omega^n|_{K_1 \cup K_2}])) = \left( 0, j_L^{K_2}([\omega^n|_{K_2}])\right)
                \end{align*}
                and 
                \begin{align*}
                  \left( j_L^{K_1}, j_L^{K_2} \right) (s ([\omega^n|_{K_1 \cup K_2}])) = \left( r^{K_1 \cup K_2}_{K_1}\left(j_L^{K_1 \cup K_2} ([\omega^n|_{K_1\cup K_2}]) \right), r^{K_1 \cup K_2}_{K_2} \left( j_L^{K_1 \cup K_2}([\omega^n|_{K_1\cup K_2}])\right)\right).     
                \end{align*}
              Since $ r^{K_1 \cup K_2}_{K_1}\left(j_L^{K_1 \cup K_2} ([\omega^n|_{K_1\cup K_2}]) \right) = 0$, we have $j_L^{K_1 \cup K_2} ([\omega^n|_{K_1\cup K_2}]) = 0$ due to the injectivity of $r^{K_1 \cup K_2}_{K_1}$.\\

           \end{proof}    
           \begin{proposition}\label{nondijoint}
               Let $(K_1, K_2)$ be a contact pair in $M$. Assume that $K_1, K_2, K_1 \cup K_2$ and $K_1 \cap K_2$ are compact domains with contact type and index-bounded boundaries and that $K_1$ and $K_2$ satisfy descent.
               \begin{enumerate}[label=(\alph*)]
                   \item If the restriction map 
                \begin{align*}
                    r^{K_1}_{K_1 \cap K_2} : SH_M(K_1 ; \Lambda) \lr SH_M(K_1 \cap K_2; \Lambda) 
                \end{align*}
                is injective, then $c^{SH}(M, K_1) = c^{SH}(M, K_1 \cap K_2) $.\\
                \item If the restriction map 
                \begin{align*}
                    r^{K_1 \cup K_2}_{K_1} : SH_M(K_1 \cup K_2; \Lambda)  \lr SH_M(K_1 ; \Lambda)  
                \end{align*}
                is injective, then $c^{SH}(M, K_1) = c^{SH}(M, K_1 \cup K_2)$.\\
               \end{enumerate}
           \end{proposition}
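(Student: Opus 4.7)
The plan is to reduce both statements directly to Lemma \ref{contactpair}. By definition,
\begin{align*}
c^{SH}(M, K) \;=\; -\sup\{L < 0 \mid j_L^{K}([\omega^n|_K]) = 0\},
\end{align*}
so the capacity is completely determined by the set
\begin{align*}
S_K \;:=\; \{L < 0 \mid j_L^{K}([\omega^n|_K]) = 0\}.
\end{align*}
Consequently, to prove (a) it suffices to show that $S_{K_1} = S_{K_1 \cap K_2}$, and to prove (b) it suffices to show that $S_{K_1} = S_{K_1 \cup K_2}$.

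For part (a), I would fix $L < 0$ and apply Lemma \ref{contactpair}(a). The first implication there gives $L \in S_{K_1} \Rightarrow L \in S_{K_1 \cap K_2}$ unconditionally. The converse implication is precisely the content of the ``moreover'' clause of Lemma \ref{contactpair}(a), and this is where the hypothesis that $r^{K_1}_{K_1 \cap K_2}$ is injective is used. Hence $S_{K_1} = S_{K_1 \cap K_2}$, and taking supremum and negating yields $c^{SH}(M, K_1) = c^{SH}(M, K_1 \cap K_2)$.

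For part (b), the argument is the symmetric one using Lemma \ref{contactpair}(b). The first implication of that lemma gives $L \in S_{K_1 \cup K_2} \Rightarrow L \in S_{K_1}$ without any further hypothesis. The reverse implication requires the injectivity of $r^{K_1 \cup K_2}_{K_1}$ and is again the ``moreover'' clause. Thus $S_{K_1} = S_{K_1 \cup K_2}$ and the equality of capacities follows.

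I do not anticipate any real obstacle here: the entire substance of the proposition has already been packaged into Lemma \ref{contactpair}, whose proof in turn relies on chasing the commutative diagram \eqref{sandwich} built from the restriction maps and the Mayer--Vietoris triangle of Proposition \ref{mv}. The only minor point to check is the boundary behavior of the supremum (for instance, if $S_K$ is empty or unbounded below); but since $S_{K_1}$ and the intersection/union version coincide as subsets of $\mathbb{R}_{-}$, the two suprema agree in $[-\infty, 0]$, so the equality of capacities is automatic.
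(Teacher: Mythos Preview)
Your proposal is correct and follows essentially the same route as the paper: the paper's proof also reduces everything to Lemma \ref{contactpair}, observing that the injectivity hypothesis makes the sets $\{L<0 \mid j_L^{K_1}([\omega^n|_{K_1}])=0\}$ and $\{L<0 \mid j_L^{K_1\cap K_2}([\omega^n|_{K_1\cap K_2}])=0\}$ coincide (and similarly for the union), so the suprema agree. Your explicit introduction of the sets $S_K$ and the remark on boundary cases are just a slightly more verbose packaging of the same argument.
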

      \begin{proof}
      We shall prove the first statement since the other can be proved in an analogous way.
 \begin{align*}
        c^{SH}(M, K_1) &= - \sup \left\{ L <0 \bigmid j_L^{K_1}([\omega^n|_{K_1}]) = 0 \right\} \\& = - \sup \left\{ L <0 \bigmid j_L^{K_1 \cap K_2}([\omega^n|_{K_1 \cap K_2}]) = 0 \right\} &&\text{By (a) of Lemma}\,\, \ref{contactpair} \\& = c^{SH}(M, K_1 \cap K_2).
    \end{align*}  
      \end{proof}
\begin{proposition}\label{finitesh}
   If $SH_M(K ; \Lambda) = 0$, then $c^{SH}(M, K) < \infty$.
\end{proposition}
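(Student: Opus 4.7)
The plan is to use the non-$S^1$-equivariant exact triangle from Corollary \ref{firstgh} (its non-$S^1$ analogue indicated in Remark \ref{nons1}), taken at $L = -\infty$:
\begin{align*}
    H(K, \partial K; \Lambda) \xrightarrow{j_{-\infty}} SH_M(K; \Lambda) \lr SH^{-}_M(K; \Lambda) \xrightarrow{\delta\,[+1]} H(K, \partial K; \Lambda)[+1].
\end{align*}
Under the hypothesis $SH_M(K; \Lambda) = 0$, the associated long exact sequence collapses, so the connecting map $\delta : SH^{-}_M(K; \Lambda) \to H(K, \partial K; \Lambda)$ becomes an isomorphism (shifted by one in degree). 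In particular, the fundamental class $[\omega^n|_K]$ admits a preimage, i.e.\ there is a class $\alpha \in SH^{-}_M(K; \Lambda)$ with $\delta(\alpha) = [\omega^n|_K]$.

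Next I would invoke part (a) of Proposition \ref{building} (applied after tensoring with $\Lambda$, which is harmless since $\Lambda$ is flat over $\Lambda_{\geq 0}$) to write
\begin{align*}
    SH^{-}_M(K; \Lambda) = \varinjlim_{L \to -\infty} SH^{-, >L}_M(K; \Lambda).
\end{align*}
Since this is a sequential direct limit of modules, the class $\alpha$ is realized by some representative at a finite level: there exist $L < 0$ and $\alpha_L \in SH^{-, >L}_M(K; \Lambda)$ with $\iota_L(\alpha_L) = \alpha$. Then
\begin{align*}
    \delta \iota_L(\alpha_L) = \delta(\alpha) = [\omega^n|_K] \in H(K, \partial K; \Lambda).
\end{align*}

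By Remark \ref{altsh}, this shows that $L$ lies in the set
\begin{align*}
    \left\{L < 0 \bigmid \delta \iota_L(\beta) = [\omega^n|_K] \text{ for some } \beta \in SH^{-, >L}_M(K; \Lambda)\right\},
\end{align*}
which is therefore nonempty, giving $c^{SH}(M, K) \leq -L < \infty$. The only moderately delicate point is the direct-limit lifting in the middle step, which is the reason I rely on Proposition \ref{building}(a) rather than trying to argue directly at the completed chain level; nothing else in the argument is more than formal manipulation with the exact triangle and the definition of $c^{SH}$. I do not foresee a substantial obstacle beyond keeping the grading and coefficient-ring bookkeeping straight.
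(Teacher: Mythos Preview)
Your proof is correct and follows essentially the same route as the paper: both use the non-$S^1$-equivariant exact triangle \eqref{triforsh} at $L=-\infty$ to produce $\alpha\in SH^{-}_M(K;\Lambda)$ with $\delta(\alpha)=[\omega^n|_K]$, and then lift $\alpha$ to a finite action level to invoke Remark~\ref{altsh}. The only difference is that you justify the lift explicitly via the direct-limit description in Proposition~\ref{building}(a), whereas the paper simply asserts ``we may assume $\alpha\in SH^{-,>L}_M(K;\Lambda)$ for some $L<0$''.
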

\begin{proof}
 If $SH_M(K; \Lambda) = 0$, then we have an isomorphism 
 \begin{align}\label{sh0}
     H(K, \partial K ; \Lambda) \cong SH^{-}_M(K; \Lambda)
 \end{align}
 from the exact triangle \eqref{triforsh} in Remark \ref{nons1} when $L = - \infty$. Let $\alpha \in SH^{-}_M(K ; \Lambda)$ be the element corresponding to $[\omega^n|_K]$ under the isomorphism \eqref{sh0}. We may assume that $\alpha \in SH^{-,>L}_M(K ; \Lambda)$ for some $L < 0$. Since $\delta \iota_L (\alpha) = [\omega^n|_K]$, it follows from the alternative description of $c^{SH}(M,K)$ in Remark \ref{altsh} that $c^{SH}(M, K) \leq - L < \infty$.
  
\end{proof}

\subsection{Comparison of $c_1^{GH}$ and $c^{SH}$}
Now we shall compare the first relative Gutt-Hutchings capacity $c_1^{GH}(M, K)$ and the symplectic (co)homology capacity $c^{SH}(M, K)$. The following lemma tells us that in general $c^{SH}(M, K)$ is bigger than or equal to $c_1^{GH}(M, K)$.
\begin{lemma}\label{comm}
        $c_1^{GH}(M, K) \leq c^{SH}(M, K)$.  
        
\end{lemma}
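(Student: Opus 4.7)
The plan is to compare the two capacities by comparing the non-equivariant and $S^1$-equivariant exact triangles used in their definitions. Using Proposition~\ref{firstghpro} and the definition of $c^{SH}(M,K)$, it suffices to show that whenever $j_L([\omega^n|_K]) = 0$ for some $L<0$, then also $j_L^{S^1}([\omega^n|_K]\otimes [\text{pt}])=0$; indeed this gives the inclusion
\[
\left\{L<0 \bigmid j_L([\omega^n|_K])=0\right\} \subseteq \left\{L<0 \bigmid j_L^{S^1}([\omega^n|_K]\otimes[\text{pt}])=0\right\},
\]
which upon taking sup and negating yields $c_1^{GH}(M,K)\leq c^{SH}(M,K)$.

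To produce this implication, I would use the chain-level inclusion $x\mapsto 1\otimes x$ from $CF_w(H)$ into $CF_w^{S^1}(H)=\Lambda_{\geq0}[u]\otimes CF_w(H)$, the same map that serves as the first arrow in the Gysin short exact sequence \eqref{proofcoro}. Because $d_w^{S^1}(1\otimes x) = 1\otimes \psi_0(x)$ and $\psi_0 = d_w$, this is a chain map. It is moreover strictly action-preserving and commutes with continuation maps, so it restricts to $CF_w^{>L}(H)\to CF_w^{S^1,>L}(H)$ and to $CF_w^{>-\epsilon}(H)\to CF_w^{S^1,>-\epsilon}(H)$. Taking direct limits over $\mathcal{H}_K^{\mathrm{Cont}}$ and completing (as in the proof of Proposition~\ref{gysin}, using flatness and the Mittag-Leffler argument) produces a morphism between the short exact sequences that define $j_L$ and $j_L^{S^1}$, and hence a commutative square
\begin{center}
\begin{tikzcd}
H(K,\partial K;\Lambda) \arrow{r}{j_L}\arrow{d}{\iota} & SH^{>L}_M(K;\Lambda) \arrow{d} \\
H(K,\partial K;\Lambda)\otimes H(BS^1;\Lambda) \arrow{r}{j_L^{S^1}} & SH^{S^1,>L}_M(K;\Lambda).
\end{tikzcd}
\end{center}

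The one point that requires care is identifying the left vertical arrow $\iota$. Under the isomorphisms established in the proof of Proposition~\ref{building}(c), the group $H(K,\partial K;\Lambda)$ is computed from the Morse complex $CM_w(-H|_{S^1\times K};\Lambda)$ and the group $H(K,\partial K;\Lambda)\otimes H(BS^1;\Lambda)$ is computed from $\Lambda[u]\otimes CM_w(-H|_{S^1\times K};\Lambda)$ with the split differential $\mathrm{id}\otimes d$. The chain-level map $x\mapsto 1\otimes x$ is precisely the inclusion into the $u^0$ summand, so on cohomology it is $\alpha\mapsto \alpha\otimes 1 = \alpha\otimes[\text{pt}]$, identifying $[\text{pt}]\in H(BS^1;\Lambda)=\Lambda[u]$ with the degree-zero generator $1$.

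Granted this identification, the commutative square above sends $[\omega^n|_K]\in H(K,\partial K;\Lambda)$ to $[\omega^n|_K]\otimes[\text{pt}]$ on the left and to the image of $j_L([\omega^n|_K])$ under the right vertical map. Thus $j_L([\omega^n|_K])=0$ immediately forces $j_L^{S^1}([\omega^n|_K]\otimes[\text{pt}])=0$, completing the proof. The main (though routine) obstacle is the bookkeeping to verify that the completions and direct limits are compatible with the chain-level inclusion $x\mapsto 1\otimes x$ on both the $>L$ and $>-\epsilon$ subcomplexes simultaneously, so that the induced square of long exact triangles really commutes; this is essentially the same diagram chase already carried out in the proofs of Proposition~\ref{gysin} and Proposition~\ref{building}(c).
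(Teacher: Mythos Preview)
Your proof is correct and follows essentially the same approach as the paper: both reduce to showing that $j_L([\omega^n|_K])=0$ implies $j_L^{S^1}([\omega^n|_K]\otimes[\text{pt}])=0$ via the commutative square built from the chain-level map $x\mapsto 1\otimes x$, with the left vertical arrow identified as $\alpha\mapsto\alpha\otimes[\text{pt}]$. Your write-up is in fact more explicit than the paper's about why the square commutes and how the identification of the left vertical map works.
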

\begin{proof}
    Recall from Proposition \ref{firstghpro} that
    \begin{align*}
        c_1^{GH}(M,K) = - \sup \left\{ L < 0 \bigmid j^{S^1}_L([\omega^n|_K] \otimes [\text{pt}]) = 0 \right\}.
    \end{align*}
    Let 
    \begin{align*}
        \zeta = \sup \left\{ L < 0 \bigmid j^{S^1}_L([\omega^n|_K] \otimes [\text{pt}]) = 0 \right\}
    \end{align*}
    and
    \begin{align*}
        \xi = \sup \left\{L < 0 \bigmid j_L([\omega^n|_K]) = 0 \right\}.
    \end{align*}
    It suffices to show that $\zeta \geq \xi$. Consider the following commutative diagram.
    \begin{align}\label{ladder}
        \includegraphics[scale=1.2]{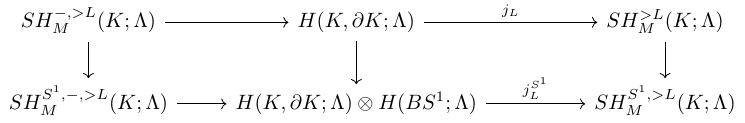}
    \end{align}
    The first and the second row of \eqref{ladder} come from exact triangles \eqref{ET} and \eqref{triforsh}, respectively. The first and the third vertical maps are induced by $x \mapsto 1 \otimes x$ and the second vertical map is induced by $x \mapsto x \otimes [\text{pt}]$. Suppose that $j_L([\omega^n|_K]) = 0$ for some $L < 0$. Then by the commutativity of the second rectangle of the diagram \eqref{ladder}, we have $j^{S^1}_L([\omega^n|_K] \otimes [\text{pt}]) = 0$ and this proves that $\xi \leq \zeta$.\\
\end{proof}
  With some assumption on the Conley-Zehnder indices of Hamiltonian orbits, we can also prove the opposite inequality of Lemma \ref{comm}. We recall the following definition.
\begin{definition}
    Let $(\Sigma, \xi, \alpha)$ be a contact manifold of dimension $2n - 1$ where $\xi$ is a contact structure and $\alpha$ is a contact form. Assume that the first Chern class $c_1(\xi)$ vanishes. A contact form $\alpha$ is called \textbf{dynamically convex} if every contractible periodic Reeb  orbit $\gamma$ of $\alpha$ satisfies $CZ(\gamma) \geq n+1$.
    \end{definition}
Following lemma is the main ingredient to prove the opposite inequality of Lemma \ref{comm}.

\begin{lemma}\label{-n}
   Assume that $\dim M = 2n$ and the contact form $\lambda_K$ of $\partial K$ is dynamically convex. If $\ell \geq -n$, then $SH^{S^1,-, >L, \ell}_M(K) = 0$ for each $L \in \RR_- \cup \{-\infty\}$.
\end{lemma}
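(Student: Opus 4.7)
The plan is to reduce the claim to a purely chain-level statement: for every $H \in \mathcal{H}^{\text{Cont}}_K$, the complex $CF^{S^1,-,>L}_w(H)$ has no generators in degree $\ell \geq -n$. Since taking the directed limit over $\mathcal{H}^{\text{Cont}}_K$ and completing with respect to the Novikov filtration both preserve the $\ZZ$-grading, this chain-level vanishing will immediately upgrade to the desired vanishing of $SH^{S^1,-,>L,\ell}_M(K)$ after applying homology.

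First, I would identify the generators. By Definition \ref{actionhom}(b), $CF^{S^1,-,>L}_w(H)$ is generated by classes $u^k \otimes [x,\tilde{x}]$ with $k \geq 0$, where $x$ is a \textit{lower nonconstant} 1-periodic orbit of $H$ with action greater than $L$. By the profile of a contact type $K$-admissible Hamiltonian, every such $x$ lies in the convex cylindrical region $\partial K \times [0, \tfrac{1}{3}\eta]$ where $H \approx h_1(e^{\rho})$; the formula $X_H = -h_1'(e^{\rho}) R_{\lambda_K}$ shows $x$ comes from a contractible Reeb orbit $\gamma$ of $(\partial K, \lambda_K)$. After Morse--Bott perturbation (Theorem \ref{reeb}), each such $\gamma$ contributes two nondegenerate Hamiltonian orbits $\gamma_{\text{Max}}$ and $\gamma_{\min}$ with
\[
    CZ_\tau(\gamma_{\text{Max}}) = -CZ_\tau(\gamma), \qquad CZ_\tau(\gamma_{\min}) = -CZ_\tau(\gamma) - 1.
\]

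Next I would apply dynamical convexity. Since $\lambda_K$ is dynamically convex and $\dim M = 2n$, every contractible Reeb orbit $\gamma$ of $\partial K$ satisfies $CZ_\tau(\gamma) \geq n + 1$. Combined with the formulas above, this yields $CZ_\tau(x) \leq -n - 1$ for every lower nonconstant Hamiltonian orbit $x$. The symplectic asphericity of $M$ guarantees that the Conley--Zehnder index is independent of the capping, and also guarantees the Reeb orbits entering the Floer complex are exactly the contractible ones, so dynamical convexity does apply.

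Finally, the degree of a generator is $|u^k \otimes [x,\tilde{x}]| = CZ_\tau(x) - 2k$, and since $k \geq 0$ we conclude
\[
    |u^k \otimes [x,\tilde{x}]| \leq CZ_\tau(x) \leq -n - 1 < -n.
\]
Hence $CF^{S^1,-,>L,\ell}_w(H) = 0$ whenever $\ell \geq -n$, and the limit--completion--homology procedure finishes the argument. I do not anticipate a serious obstacle here: this is a degree-counting argument, and the only thing to get right is the bookkeeping of sign conventions when invoking Theorem \ref{reeb}, making sure the indices of the perturbed Hamiltonian orbits in our conventions match what the dynamical convexity bound on the Reeb side predicts.
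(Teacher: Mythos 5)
Your proposal is correct and follows essentially the same route as the paper: identify the generators of the negative complex via the Morse--Bott perturbation (Theorem \ref{Kversion}), bound their Conley--Zehnder indices using dynamical convexity so that every generator $u^k\otimes x$ has degree $\mathrm{CZ}(x)-2k\le -n-1$, and conclude that nothing in degree $\ell\ge -n$ survives the action truncation, direct limit and completion. The only difference is cosmetic: the paper additionally invokes the proof of Theorem 1.1 of \cite{gutt} to reduce to homology classes of the form $u^0\otimes x_{\text{Max}}$, whereas your purely chain-level degree count makes that reduction unnecessary --- a simplification the paper itself concedes when it remarks that there is in fact no generator of grading $\ell\ge -n$ at all.
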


To prove Lemma \ref{-n} above, we need a slight modification of the definition of $\mathcal{H}^{MB}$ introduced in Definition \ref{mb}. 
\begin{definition}
    Let $(M, \omega)$ be a closed symplectic manifold and $K \subset M$ be a compact domain with contact type boundary. A \textbf{$K$-admissible Morse-Bott Hamiltonian function} is an autonomous function $H : M \to \RR$ satisfying the following conditions:
    \begin{enumerate}[label=(\alph*)]
        \item $H$ is negative and $C^2$-small on $K$\\
        \item There exists $\rho_0 > 0$ such that $H(p, \rho)$ is $C^2$-close to $h_1(e^{\rho})$ on $\partial K \times [0, \frac{1}{3}\rho_0]$ for some convex and increasing function $h_1$ satisfying $h_1^{''} - h_1^' >0$.\\
        \item  $H(p, \rho) = \beta e^{\rho} + \beta'$ on $\partial K \times [\frac{1}{3}\rho_0, \frac{2}{3}\rho_0]$ where $\beta \notin \text{Spec}(\partial K, \lambda_K)$ and $\beta' \in \RR$.\\
        \item $H(p, \rho)$ is $C^2$-close to $h_2(e^{\rho})$ on $\partial K \times [\frac{2}{3}\rho_0, \rho_0]$ for some concave and increasing function $h_2$.\\
        \item $H$ is $C^2$-close to a constant function on $M - K \cup (\partial K \times [0, \rho_0])$.
       
    \end{enumerate}
    We denote the set of all $K$-admissible Morse-Bott Hamiltonian functions by $\mathcal{H}_K^{MB}$.\\
\end{definition}

Theorem \ref{reeb} still holds for this newly defined Hamiltonian functions in $\mathcal{H}_K^{MB}$. Following theorem is a rewriting of Theorem \ref{reeb} and a detailed proof of it can be found in \cite{bo09} and \cite{cfhw}.

\begin{theorem}\label{Kversion}
     A $K$-admissible Morse-Bott Hamiltonian function $H \in \mathcal{H}_K^{MB}$ can be perturbed to a contact type $K$-adimissible Hamiltonian function $H' \in \mathcal{H}_K^{\text{Cont}}$ whose 1-periodic orbits satisfy the following:
    \begin{enumerate}[label=(\alph*)]
        \item The constant periodic orbits of $H'$ are the critical points of $H$.\\
        \item For each Reeb orbit $\gamma$ of $(\partial M, \lambda_M)$, there are two nondegenerate Hamiltonian orbits $\gamma_{\text{Max}}$ and $\gamma_{\text{min}}$ of $H'$ such that for a given symplectic trivialization $\tau$ along $\gamma$,
        \begin{align*}
            - \text{CZ}_{\tau}(\gamma_{\text{Max}}) = \text{CZ}_{\tau}(\gamma) \,\, \text{and}\,\, - \text{CZ}_{\tau}(\gamma_{\text{min}}) = \text{CZ}_{\tau}(\gamma) +1 
        \end{align*}
        where $\text{CZ}_{\tau}$ stands for the Conley-Zehnder index calculated under the trivialization $\tau$.\\
    \end{enumerate}
\end{theorem}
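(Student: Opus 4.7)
The plan is to recognize Theorem \ref{Kversion} as a reformulation of Theorem \ref{reeb} adapted to the $K$-admissible setting inside a closed ambient manifold, and to prove it by invoking the standard Morse-Bott perturbation construction of Bourgeois--Oancea \cite{bo09} and Cieliebak--Floer--Hofer--Wysocki \cite{cfhw}. The only new element to verify is that the Hamiltonian profile prescribed by $\mathcal{H}_K^{MB}$ fits the framework required by that construction and that the perturbed function still belongs to $\mathcal{H}_K^{\text{Cont}}$.

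First I would enumerate the $1$-periodic orbits of the unperturbed $H \in \mathcal{H}_K^{MB}$. On the interior region where $H|_K$ is $C^2$-small, Remark \ref{convention}(b) identifies them with critical points of $H$, producing the constant orbits of part (a). On the convex stratum $\partial K \times [0,\tfrac{1}{3}\rho_0]$ and the concave stratum $\partial K \times [\tfrac{2}{3}\rho_0, \rho_0]$ where $H = h_i(e^\rho)$, the Hamiltonian vector field is a negative multiple of the Reeb vector field, so the $1$-periodic orbits correspond to Reeb orbits $\gamma$ of $(\partial M, \lambda_M)$ whose period equals some value of $h_i'$; each such $\gamma$ contributes a Morse-Bott $S^1$-family. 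No further orbits appear on the linear middle stratum or on the outer $C^2$-small region, since the slope $\beta$ avoids $\mathrm{Spec}(\partial M, \lambda_M)$ and the outer region contains no critical points in its interior by genericity.

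Next I would apply the Bourgeois--Oancea perturbation recipe to each Morse-Bott circle $S_\gamma$. One chooses a perfect Morse function $f_\gamma$ on $S^1$ with a unique maximum and minimum, extends it to a tubular neighborhood of $S_\gamma$, and sets $H' = H + \sum_\gamma \epsilon_\gamma \widetilde{f}_\gamma$ for $\epsilon_\gamma$ sufficiently small. Each $S_\gamma$ breaks into exactly two nondegenerate Hamiltonian orbits $\gamma_{\text{Max}}, \gamma_{\text{min}}$, and the linearized-flow computation in \cite{bo09} and \cite{cfhw} yields the index shift $\mathrm{CZ}_\tau(\gamma_{\text{min}}) = \mathrm{CZ}_\tau(\gamma_{\text{Max}}) - 1$. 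The overall sign in the stated formulas reflects the convention $X_H = -h'(e^\rho) R_{\lambda_M}$ of this paper (see the discussion following Definition \ref{mb}), which reverses the Hamiltonian flow relative to the Reeb flow and thus flips the sign of the Conley--Zehnder index relative to that of the underlying Reeb orbit.

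The main obstacle, and the one place where the closed relative setting differs from the original admissible setup, is verifying that $H'$ still belongs to $\mathcal{H}_K^{\text{Cont}}$. Because the perturbation has arbitrarily small $C^2$-norm and is supported in small tubular neighborhoods of the Reeb orbit circles, which sit strictly inside the convex and concave strata once $h_i'$ is chosen with values away from $\mathrm{Spec}(\partial M, \lambda_M)$, the convex/linear/concave/$C^2$-small profile prescribed in the definition of $\mathcal{H}_K^{\text{Cont}}$ is preserved. Treating finitely many Morse-Bott families at a time and iterating over an exhausting sequence of action windows then produces the desired perturbation, exactly as in \cite{bo09, cfhw}, so that no genuinely new analytic difficulty arises beyond what is already handled in those references.
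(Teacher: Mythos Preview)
Your proposal is correct and matches the paper's approach: the paper does not give an independent proof of Theorem~\ref{Kversion} but simply declares it a rewriting of Theorem~\ref{reeb} for the $K$-admissible setting and refers to \cite{bo09} and \cite{cfhw} for the details. Your write-up in fact supplies more justification than the paper itself, particularly the check that the perturbed Hamiltonian remains in $\mathcal{H}_K^{\text{Cont}}$.
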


\noindent\textit{Proof of Lemma \ref{-n}}.
   Let us consider the case where $L = -\infty$ first. Let $H \in \mathcal{H}^{\text{Cont}}_K$. We may assume that $H$ is perturbed from a $K$-admissible Morse-Bott Hamiltonian function. Then by Theorem \ref{Kversion}, the generating set of $CF^{S^1,\ell}_w(H) / CF^{S^1, >-\epsilon, \ell}_w(H)$ is 
    \begin{align*}
        \{ u^k \otimes x_{\text{Max}} \mid \text{CZ}(x_{\text{Max}}) - 2k = \ell, \,k=0,1,2,\cdots \} \\\cup\,\, \{ u^k \otimes x_{\text{min}} \mid \text{CZ}(x_{\text{min}}) - 2k = \ell, \,k=0,1,2,\cdots \}
    \end{align*}
where $x$ is a Reeb orbit of $(\partial K, \lambda_K)$ and $x_{\text{Max}}$ and $x_{\text{min}}$ are Hamiltonian orbits of $H$ described in Theorem \ref{Kversion}. It follows from the proof of Theorem 1.1 of \cite{gutt} that the $SH^{S^1,-,\ell}_M(K) = H \left(CF^{S^1,\ell}_w(H) / CF^{S^1, >-\epsilon, \ell}_w(H) \right)$ is generated by the (equivalence classes) of $u^0 \otimes x_{\text{Max}}$. Then
\begin{align*}
    \ell& = \text{CZ}(x_{\text{Max}}) \\& = - \text{CZ}(x)&&\text{Theorem \ref{Kversion}}\\& \leq -n-1. &&(\partial K, \lambda_K)\,\,\text{is dynamically convex}
\end{align*}
Therefore, $SH^{S^1,-,\ell}_M(K) = 0$ for $\ell \geq -n$. This proof actually shows that if $\ell \geq - n$, then there is no generator of grading $\ell$. Since there is still no generator of grading $\ell$ with action greater than $L \in \RR_-$, we also have $SH^{S^1,-,>L, \ell}_M(K) = 0$ for each $L \in \RR_- $.\\
\qed   
  
\begin{theorem}\label{shgheq}
    If $\dim M = 2n$ and the contact form $\lambda_K$ of $\partial K$ is dynamically convex, then
    \begin{align*}
        c_1^{GH}(M, K) = c^{SH}(M, K).
    \end{align*}
\end{theorem}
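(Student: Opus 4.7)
The plan is to use Lemma \ref{comm} for one direction and a lifting argument across the Gysin triangle for the other. Since Lemma \ref{comm} already gives $c_1^{GH}(M,K) \leq c^{SH}(M,K)$, the content lies in the reverse inequality. By Proposition \ref{firstghpro} and Remark \ref{altsh}, this reduces to the following claim: for every $L<0$, whenever there exists $\alpha \in SH^{S^1,-,>L}_M(K;\Lambda)$ with $\delta\iota_L(\alpha)=[\omega^n|_K]\otimes[\text{pt}]$, there also exists $\alpha' \in SH^{-,>L}_M(K;\Lambda)$ with $\delta\iota_L(\alpha')=[\omega^n|_K]$.

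The fundamental class $[\omega^n|_K]$ sits in Conley-Zehnder degree $-n$ (it corresponds to the unique index-$2n$ critical point of $-H|_K$ and $\text{CZ}=n-\text{ind}$), so $\alpha$ and the desired lift $\alpha'$ both live in degree $-n-1$. Apply the negative Gysin triangle of Corollary \ref{gysinaction}(b) in degree $-n-1$:
\begin{equation*}
\cdots \to SH^{-,>L,-n-1}_M(K;\Lambda) \xrightarrow{\tilde f} SH^{S^1,-,>L,-n-1}_M(K;\Lambda) \xrightarrow{U} SH^{S^1,-,>L,-n+1}_M(K;\Lambda) \to \cdots,
\end{equation*}
where $\tilde f$ is induced by $x\mapsto 1\otimes x$. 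Because $-n+1 \geq -n$, Lemma \ref{-n} gives $SH^{S^1,-,>L,-n+1}_M(K;\Lambda)=0$, so $U(\alpha)=0$ and exactness furnishes a lift $\alpha' \in SH^{-,>L,-n-1}_M(K;\Lambda)$ with $\tilde f(\alpha')=\alpha$.

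To identify $\delta\iota_L(\alpha')$ with $[\omega^n|_K]$, use the naturality of the connecting map. The chain maps $x\mapsto 1\otimes x$ induce a morphism from the action-filtered short exact sequence $0\to CF_w^{>-\epsilon}(H) \to CF_w^{>L}(H) \to CF_w^{-,>L}(H) \to 0$ into its $S^1$-equivariant counterpart, and the flatness and Mittag-Leffler arguments used in the proofs of Proposition \ref{gysin} and Corollary \ref{gysinaction} ensure that this naturality persists after taking the completed direct limit and tensoring with $\Lambda$. Chasing the resulting square of connecting maps yields
\begin{equation*}
\tilde f(\delta\iota_L(\alpha')) = \delta\iota_L(\tilde f(\alpha')) = \delta\iota_L(\alpha) = [\omega^n|_K]\otimes[\text{pt}] = \tilde f([\omega^n|_K]),
\end{equation*}
where the last equality uses the identifications $SH^{>-\epsilon}_M(K;\Lambda)\cong H(K,\partial K;\Lambda)$ and $SH^{S^1,>-\epsilon}_M(K;\Lambda)\cong H(K,\partial K;\Lambda)\otimes H(BS^1;\Lambda)$ from Proposition \ref{building}(c), under which $\tilde f$ becomes $x\mapsto x\otimes[\text{pt}]$. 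Injectivity of this map (split by projection to the $H^0(BS^1)$ factor) forces $\delta\iota_L(\alpha')=[\omega^n|_K]$, as required. The main obstacle is the lifting step, where Lemma \ref{-n} together with the negative Gysin triangle eliminates the obstruction $U(\alpha)$; once $\alpha$ has been lifted, the rest is a formal diagram chase, with the only real book-keeping being the verification that naturality of the connecting map survives both the completion and the change of coefficients to $\Lambda$.
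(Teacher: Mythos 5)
Your proposal is correct and follows essentially the same route as the paper: the reverse inequality is obtained by placing the relevant classes in degree $-n-1$, invoking Lemma \ref{-n} together with the action-filtered negative Gysin triangle of Corollary \ref{gysinaction}(b) to lift the $S^1$-equivariant class to a non-equivariant one, and then chasing the commutative ladder relating the two $\delta\circ\iota_L$ maps. The only (harmless) differences are that you use just the vanishing in degree $-n+1$ to get surjectivity of $x\mapsto 1\otimes x$, where the paper uses both vanishings to get an isomorphism $\eta$, and you spell out the split injectivity of $x\mapsto x\otimes[\text{pt}]$, which the paper leaves implicit.
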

\begin{proof}
Recall that
    \begin{align*}
        c_1^{GH}(M, K) = - \sup \left\{ L < 0 \bigmid \delta  \iota_L (\alpha) = [\omega^n|_K] \otimes [\text{pt}] \,\, \text{for some} \,\, \alpha \in SH^{S^1, -, >L}_M(K ; \Lambda) \right\}.
    \end{align*}
    and 
    \begin{align*}
        c^{SH}(M, K) = - \sup \left\{ L < 0 \bigmid \delta \iota_L (\alpha') = [\omega^n|_K]  \,\, \text{for some} \,\, \alpha' \in SH^{-, >L}_M(K ; \Lambda) \right\}.
    \end{align*}
More precisely, the class $\alpha$ defining $c_1^{GH}(M, K)$ is in $SH^{S^1, -, >L, -n-1}_M(K ; \Lambda)$ and the class $\alpha'$ defining $c^{SH}(M, K)$ is in $SH^{-, >L, -n-1}_M(K ; \Lambda)$. For the convention on the grading, see Remark \ref{convention}. Let
\begin{align*}
    \zeta = \sup \left\{ L < 0 \bigmid \delta  \iota_L (\alpha) = [\omega^n|_K] \otimes [\text{pt}] \,\, \text{for some} \,\, \alpha \in SH^{S^1, -, >L}_M(K ; \Lambda) \right\}
\end{align*}
and let
\begin{align*}
    \xi = \sup \left\{ L < 0 \bigmid \delta \iota_L (\alpha') = [\omega^n|_K]  \,\, \text{for some} \,\, \alpha' \in SH^{-, >L}_M(K ; \Lambda) \right\}.
\end{align*}
This time, we need to prove $\zeta \leq \xi$. We can extract a part of the Gysin sequence constructed in (b) of Corollary \ref{gysinaction} as follows.
    \begin{align*}
        SH^{S^1,-,>L, -n}_M(K ; \Lambda) &\lr SH^{-,>L, -n-1}_M(K ; \Lambda) \\&\lr SH^{S^1,-,>L,-n-1}_M(K ; \Lambda) \lr SH^{S^1,-,>L,-n+1}_M(K ; \Lambda).
    \end{align*}
    It follows from Lemma \ref{-n} that $SH^{S^1,-,>L, -n}_M(K ; \Lambda) = 0$ and $SH^{S^1,-,>L,-n+1}_M(K ; \Lambda) = 0$. Therefore,
    \begin{align}\label{iso}
        SH^{-,>L, -n-1}_M(K ; \Lambda) \cong SH^{S^1,-,>L,-n-1}_M(K ; \Lambda).
    \end{align}
    Let us denote this isomorphism \eqref{iso} by $\eta$. Following commutative diagram is a part of \eqref{ladder} magnifying what we need.
    \begin{align}\label{ladder2}
        \includegraphics[scale=1.1]{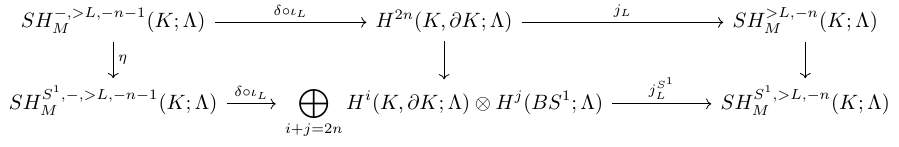}
    \end{align}
    As before, the vertical map in the middle is given by $x \mapsto x \otimes [\text{pt}]$. Suppose $\delta \iota_L(\alpha) = [\omega^n|_K] \otimes [\text{pt}]$ for some $\alpha \in SH^{S^1, -, >L, -n-1}_M(K ; \Lambda)$. Since $\eta$ is an isomorphism, there exists $\alpha' \in SH^{-,>L, -n-1}_M(K ; \Lambda)$ corresponding to $\alpha$. Then by the commutativity of the first rectangle in \eqref{ladder2}, we should have $\delta \iota_L (\alpha') = [\omega^n|_K]$ and we can conclude the proof.
   \\
\end{proof}

\Addresses

\end{document}